\newtheorem{theorem}{Theorem}[section]
\newtheorem{lem}[theorem]{Lemma}
\newtheorem{lemma}[theorem]{Lemma}
\newtheorem{cor}[theorem]{Corollary}
\newtheorem{corollary}[theorem]{Corollary}
\newtheorem{quest}[theorem]{Question}
\newtheorem{proposition}[theorem]{Proposition}
\newtheorem{rem}[theorem]{Remark}
\newtheorem{example}[theorem]{Example}
\newtheorem{defn}[theorem]{Definition}
\newcommand{\norm}[1]{\left\lVert#1\right\rVert}
\newcommand{\hto}{\hookrightarrow }
\numberwithin{equation}{section}
\title[Isomorphic classification of $L_{p,q}$-spaces, II]{Isomorphic classification of $L_{p,q}$-spaces, II}
\author[J. Huang]{Jinghao Huang}
\address{School of Mathematics and Statistics, University of New South Wales,
Kensington, 2052, Australia}
\email{\textcolor[rgb]{0.00,0.00,1}{jinghao.huang@unsw.edu.au}}
\author[F. Sukochev]{Fedor Sukochev}
\address{School of Mathematics and Statistics, University of New South Wales,
Kensington, 2052, Australia; North-Ossetian State University, Vladikavkaz, Russia, 362025}
\email{\textcolor[rgb]{0.00,0.00,1}{f.sukochev@unsw.edu.au}}
\subjclass[2010]{46E30. \hfill 
}
\keywords{isomorphic embedding; $L_{p,q}$-spaces; $\sigma$-finite measure spaces.
}
\begin{document}

\maketitle
\begin{abstract}
 This is a continuation of the papers \cite{KS} and \cite{SS2018}, in which the isomorphic classification of $L_{p,q}$, for   $1< p<\infty$, $1\le q<\infty$, $p\ne  q $, on resonant   measure spaces, has been obtained.
 The aim of this paper is to give a complete  isomorphic classification of $L_{p,q}$-spaces on general $\sigma$-finite   measure spaces.
 Towards this end, several new subspaces of $L_{p,q}(0,1)$ and $L_{p,q}(0,\infty)$ are identified and studied.
\end{abstract}

\section{Introduction}

This paper is devoted to the isomorphic classification of Lorentz spaces $L_{p,q}$ on  general $\sigma$-finite measure spaces, where $1<  p<\infty$, $1\le q<\infty$, $p\ne q $,
 which is a continuation of previous studies on the isomorphic classification of $L_{p,q}$ on resonant measure spaces contained in \cite{KS,SS2018}
and is motivated by earlier work on subspaces of $L_{p,q}$ due to  Carothers and Dilworth
(see e.g. \cite{Dilworth,CD89,CD,CD85,CF}).
For the isomorphic classification of $L_p$-spaces (i.e., $\ell_p^n$,  $n=1,2,\cdots,$ $\ell_p$, $L_p(0,1)$), we refer to  \cite[Part III]{Wojtaszczyk} and
\cite[Chapter XII]{Banach}.
 For the isomorphic classification of  weak $L_p$-spaces, we refer to
  \cite{Leung,Leung2,Leung3,LS}.

  The Lorentz spaces $L_{p,q}$  were introduced by G.G. Lorentz in \cite{Lorentz50,Lorentz51}, and their  importance is  demonstrated  in several areas of analysis such as harmonic analysis, interpolation theory, etc. (see e.g. \cite{Dilworth,Bennett_S} and references therein).
Recall that if $(\Omega,\Sigma, \mu)$ is a measure space, then for $1<p <\infty$ and $1\le q<\infty$, the Lorentz space $L_{p,q}(\Omega)$ is the collection of all measurable functions $f$ on $\Omega$ such that
$$ \norm{f}_{p,q}:= \left(  \int_{0}^\infty f^* (t) ^q d t^{q/p} \right)^{1/q}<\infty ,$$
where $f^*$ denotes the decreasing rearrangement of $|f|$ (see the next section).
It is well-known that $L_{p,q}(\Omega)$ is separable when $\Omega$ is $\sigma$-finite.
In the special  case when  $(\Omega, \Sigma, \mu)$ coincides with $\mathbb{N}$ equipped with the counting measure, the space $L_{p,q}(\Omega)$ coincides with the familiar symmetric  sequence space $\ell_{p,q}$ \cite{LT1,Dilworth}.
It is well-known that if $1\le q\le p<\infty$, then $\norm{\cdot}_{p,q}$ is a norm, and if $1<p<q<\infty$, then it is a complete quasi-norm which is  equivalent to a complete norm \cite{Dilworth, Bennett_S}.
It is clear that $L_{p,p}(\Omega)$ is the Lebesgue space $L_p(\Omega)$,  and that it is important to note that $L_{p,q}$-spaces arise in the Lions--Peetre $K$-method of interpolation\cite{Dilworth,CD85}.

It is known that $L_{p,q}(0,1)$ and  $L_{p,q}(0,\infty)$ are not isomorphic to each other   for any  $1< p<\infty$, $1\le q<\infty$, $p\ne  q $, and $\ell_{p,q}$ does not embed into $L_{p,q}(0,1)$ as a complemented subspace \cite{CD,Dilworth}.
A stronger result showing that $\ell_{p,q}$ does not embed isomorphically into $L_{p,q}(0,1)$ was proved in \cite{KS} and \cite{SS2018} using the so-called ``subsequence splitting lemma'' introduced and studied in \cite{S96} (see also \cite{DFPS,DDS,DSS,ASS,HSS,SS2004}).
It is interesting to note that there are symmetric function  spaces on $(0,1)$ which contain isomorphic $\ell_{p,q}$-copies, $1<p<2$, $1\le q<\infty$ \cite{Novikova}.
It is also known \cite{KS,SS2018} that
$L_{p,q}(0,\infty)$ does not embed isomorphically into the space $L_{p,q}(0,1)\oplus \ell_{p,q}$.
Hence,
 if $1<p<\infty$, $1\le q<\infty $,  $p\neq q$,  then
$$\ell_{p,q}^n, ~n=1,2,\cdots, ~\ell_{p,q},\; L_{p,q}(0,1),\; L_{p,q}(0,1)\oplus \ell_{p,q}\;\;\mbox{and}\;\; L_{p,q}(0,\infty)$$
is the full  list of pairwise non-isomorphic $L_{p,q}$-spaces over a resonant measure space.

In the present paper, we extend
 results in \cite{SS2018,KS} to the setting of general $\sigma$-finite  measure spaces.
Below, we briefly introduce the structure of the present paper.
Unless stated otherwise,
we will always assume  that $1<p<\infty$, $1\le q<\infty$ and $p\ne q$.

In Section \ref{fr}, we prove some results on the finite representability  of $\ell_{p,q}^n$, which are important auxiliary tools.
In particular, using  a  type/cotype argument and Kadec--Pe{\l}czy\'{n}ski theorem,
 we show that $\ell_{p,q}^n$ does not embed uniformly into $\ell_q$
when $1<p <\min\{2,q\}$ or $p  >\max\{2,q\}$ or $q>2$ (see Corollary~\ref{concl}).

It is well-known  that any $\sigma$-finite measure space is the direct sum of an atomless measure space and an atomic measure space.
Recall that all separable infinite (or finite) atomless  measure spaces are isomorphic to each other \cite[Theorem 9.3.4]{Bogachev},
 and $L_{p,q}(\Omega)$ is isomorphic to $\ell_{p,q}^{n}$, $1\le n\le \infty$,  if $\Omega$ is an atomic resonant  measure space.
 The main   concern of  this paper is the case when  the measures of atoms are not necessarily the same in an atomic measure space.
In Section \ref{atomic}, we
classify infinite-dimensional    $L_{p,q}$-spaces  on different atomic measure spaces and show that any $L_{p,q}$-space on such a measure space  is isomorphic to    a  direct sum of $L_{p,q}$-spaces  of the following  types (see below for definitions and  Section \ref{atomic})
$$\ell_{p,q,1}, ~ \ell_{p,q,0}(I), ~\ell_{p,q,0}(F), ~\mbox{and }\ell_{p,q,\infty}.$$
In Table \ref{t1}, we present a complete isomorphic classification of these spaces.
Note that all Banach spaces $X$ of type $\ell_{p,q,1}$ are   isomorphic to $\ell_{p,q}$.

If the atoms $A_n$ in $\Omega$ satisfy the conditions  that $\mu(A_n)\to 0$ and   $\sum \mu(A_n)=\infty $, then the $L_{p,q}$-space on such a measure space is said to be  of  type $\ell_{p,q,0}(I)$.
 Johnson et.al \cite[p.31]{JMST} introduced the subspace $U_{Y}$ of
 an arbitrary symmetric  function space $Y$ on $[0,\infty)$, which is  spanned by the characteristic functions of
 $A_n$.
  Such spaces are
  natural generalizations  of the space $X_p$ introduced by Rosenthal\cite{Rosenthal}.
  For any fixed symmetric function space $Y$ on $[0,\infty)$,
  it is proved in \cite[Theorem 8.7]{JMST} (see also \cite[Proposition 2.f.7]{LT2}) that
   $U_Y$  does not depend on the particular sequence $\{A_n\}$ used (up to an isomorphism).
   When $Y=L_{p,q}$,
    the space $U_Y$  coincides with an $L_{p,q}$-space of type $\ell_{p,q,0}(I)$ and,  for simplicity,
    we denote   $U_{p,q}:=U_Y=U_{L_{p,q}}$.
By using  techniques different those in \cite{KS,SS2018}, we show that $U_{p,q}$ does not embed into $\ell_{p,q}$ (indeed, it does not embed into $\ell_{p,q}\oplus L_{p,q}(0,1)$, see Theorem \ref{5.10} below),
 which is
a far-reaching generalization of  \cite[Theorem 11]{KS} and \cite[Theorem 10]{SS2018}.
Another interesting case is when the atoms $A_n$ satisfy the condition  $\sum_{n= 1}^\infty \mu(A_n) <\infty$ (or $\mu(A_n)\to_n \infty $).
Any $L_{p,q}$-space on a measure space  of such type is said to be of type $\ell_{p,q,0}(F)$ (resp. $\ell_{p,q,\infty}$).
It is well known that for any disjointly supported sequence of unit vectors   in $L_{p,q}(0,1)$, there exists a subsequence equivalent to the unit vector basis of $\ell_q$\cite[Theorem 5]{Dilworth} (see also \cite{CD}).
We establish  a quantitative version of this result (See Propositions \ref{Flq} and \ref{infiq}), giving a criterion for
 the  normalized characteristic
functions of   atoms (in the setting of
$\ell_{p,q,0}(F)$ and $\ell_{p,q,\infty}$) to be equivalent to the unit vector basis of $\ell_q$.
We also show that  $\left(\oplus_{n=1}^\infty \ell_{p,q}^n\right)_q$ is
isomorphic to a space of the  type $\ell_{p,q,0}(F)$ (or $\ell_{p,q,\infty}$)  for some particular choice of $\{A_n\}$.
In particular, we obtain that  $\left(\oplus_{n=1}^\infty \ell_{p,q}^n\right)_q$ is a subspace  of $L_{p,q}(0,1)$.

In Section \ref{general}, we consider $L_{p,q}$-spaces on arbitrary $\sigma$-finite measure spaces with non-trivial atomless part.
  Up to isomorphism, any such $L_{p,q}$-space is one of the following
$$L_{p,q}(0,1), ~L_{p,q}(0,1)\oplus \ell_{p,q,1 }, ~L_{p,q}(0,1)\oplus \ell_{p,q,0}(I), ~L_{p,q}(0,1)\oplus \ell_{p,q,\infty}, ~L_{p,q}(0,\infty). $$
We give a full characterization of the isomorphic embeddings between $L_{p,q}$-spaces of  all  types listed above.
It is clear that any space of the first four types is a complemented subspace of $L_{p,q}(0,\infty)$.
We show that $L_{p,q}(0,\infty)$ does not embed into any space of the first four types, which extends \cite[Theorem 11]{KS} and \cite[Theorem 10]{SS2018} significantly.
We also  establish an
$L_{p,q}$-space version of a Theorem for Orlicz sequence spaces due to  Lindenstrauss and Tzafriri\cite[Theorem 2.c.14]{LT2} (see \cite[Ch. XII, Theorem 9]{Banach} for the case of $L_p$-spaces), showing that
there exists an isomorphic embedding
$$T:L_{p,q}(0,1)\to  U_{p,q}$$ if and only if $p=q =2$.
In the language of graph theory, the tree below is the Hasse diagram for the partially ordered set consisting of the equivalence classes of $L_{p,q}(\Omega)$ under Banach isomorphism with the order relation.
  For any spaces $X\ne Y$   listed in the tree following, $X$ is isomorphic to a subspace (indeed,   a complemented subspace) of $Y$ if and only  if $X$ can be joined to $Y$ through a descending branch (see Table \ref{t2}).
Note that $(\oplus_{n=1}^\infty \ell_{p,q}^n )_q\hookrightarrow  \ell_q$ for some specific values of $p$ and $q$.

\begin{center}
\begin{tikzpicture}
  \node (max) at (0,3) {$\ell_q$};
  \node (a) at (0,2) {$(\oplus_{n=1}^\infty \ell_{p,q}^n )_q$};
  \node (b) at (-2,0) {$L_{p,q}(0,1)$};
  \node (c) at (1,1) {$\ell_{p,q}$};
  \node (d) at (2,0) {$U_{p,q}$};
  \node (e) at (-1,-1) {$L_{p,q}(0,1)\oplus \ell_{p,q}$};
  \node (f) at (0,-2) {$L_{p,q}(0,1)\oplus U_{p,q}$};
  \node (min) at (0,-3) {$L_{p,q}(0,\infty )$};
  \draw[->,dashed] (a) .. controls (1,2.5) .. (max);
  \draw (max) -- (a) -- (c)
    (a) -- (b)
     (e) -- (c)--(d)
     (b) --(e) --(f) -- (min)
    (d)-- (f)
  ;
\end{tikzpicture}
\end{center}


Our notations and terminology are standard and all unexplained terms may be found in \cite{AK,LT1,LT2,JMST}.

We would like to thank Professor
J. Arazy for helpful discussions concerning results presented in  his paper\cite{Arazy81}, which contains useful techniques in the study of isomorphic embedding of Banach spaces.
We also  thank Professor  W.B. Johnson for his comments on finite representability of $\ell_p$-spaces and   his help in proving Proposition \ref{3.4}.
We thank A. Kuryakov for many joint discussions,
and  T. Scheckter for his careful reading of this paper,
and Professor   E. Semenov
 for useful comments  and sharing the paper~\cite{Bylinkina},
 and we  thank  D. Zanin for helpful discussions.
The second author was   supported by  the Australian Research Council  (FL170100052).
Authors thank the anonymous referee for reading the paper carefully
and providing thoughtful comments, which improved the exposition of the paper.

\section{Preliminaries}\label{prel}

Let $(\Omega, \Sigma,\mu)$ be a measure space with a $\sigma$-finite measure $\mu$,
defined on $\sigma$-algebra $\Sigma$, and let $L(\Omega)$ be the algebra of all classes of equivalent measurable real-valued functions on $(\Omega, \Sigma, \mu)$.
For any function $f\in L(\Omega)$, its distribution $d_f(s)$ is given by
$$d_{f}(s): =\mu (\{f> s\}), ~s>0. $$
Denote by $L_0(\Omega)$ the subalgebra of $L(\Omega)$ consisting of all functions $f$ such that $d_{|f|}(s)<\infty$
for some $s>0$.
For every $f\in L_0(\Omega)$, its non-increasing rearrangement is  defined by
$$ f^*(t):= \inf \{s> 0 :  d_{|f|}(s)\le t\}, ~>0.  $$
In the special case when the measure space is $\mathbb{N}$ of all natural numbers equipped with the counting measure, we denote by $\ell_{p,q}$
the  Lorentz sequence space $L_{p,q}(\mathbb{N})$.
In particular,
 $$\norm{\{a_k\}_{k\ge 1}}_{p,q}=\left(\sum_{k\ge 1} (a^*_k)^q ((k)^{q/p}  - (k-1)^{q/p}) \right)^{1/q}< \infty  ,$$
  where $\{a_k^*\}_{k\ge 0}$ is the decreasing rearrangement of the sequence $\{|a_k|\}_{k=1}^\infty \in \ell_{p,q}$.
  We denote by $\ell_{p,q}^n$ when $\Omega $ consists of $n$'s  atoms of measure $1$.
The mapping $i(\{a_k\}_{k\ge 1}):= \sum_{k=1}^\infty  a_k \chi_{[k-1,k)}$, $\{a_k\}_{k\ge 1}\in \ell_{p,q}$, where $\chi_A$ is the indicator function of a measurable set $A\in \Sigma$, defines an isometric embedding of the space $\ell_{p,q}$
into $L_{p,q}(0,\infty)$.
Note that
for any $\sigma$-finite measure space $(\Omega, \Sigma, \mu)  $,
$L_{p,q}(\Omega)$ embeds isomorphically into $L_{p,q}(0,\infty)$.

Recall that two Banach spaces $X$ and $Y$ are said to be isomorphic (denoted by $X\approx Y$) if there exists an invertible bounded linear  operator from $X$ onto $Y$. Otherwise, we write $X\not \approx   Y$.
We say that $Y$
embeds into
$X$
isomorphically if there is a linear subspace
$Z\subset X$,
such that $Z$
is a Banach space and $Z$
is isomorphic to $Y$.
If   $X$ is isomorphic to a subspace (resp. complemented subspace) of  $Y$,
we write $X\hookrightarrow Y$ (resp. $X\xhookrightarrow{c } Y$).
If $X$ does not embed into $Y$
isomorphically as a subspace  (resp. a complemented subspace), we write $X\not \hookrightarrow Y$ (resp. $X\not \xhookrightarrow{c} Y$).

If two sequences $\{a_k\}$ and $\{b_k\}$
of real positive numbers satisfy the condition
$$
0 < \inf_k \frac{a_k}{b_k} \le \sup_k \frac{a_k}{b_k} <\infty,
$$
we then write $\{a_k\}\sim \{b_k\}$.
We use the same notation to denote the equivalence between basic sequences and this should not cause any confusion.
A sequence $\{x_k\}$
in a Banach space $X$
is called semi-normalized,
if there are scalars
$0<a<b<\infty$
such that $a\le \norm{x_n }\le b$,
$n \ge 1$.
We denote the unit vector basis of $\ell_q$ and
$\ell_{p,q}$
by
$\{e^{\ell_q}_k \}_{k=1}^\infty $
and $\{e_k^{\ell_{p,q}}\}_{k=1}^\infty$, respectively.

We recall below a well-known result (see e.g. \cite[Lemma 2.1]{CD}, see also  \cite{ACL, Dilworth,KamMal,LT1,Tradacete}).
\begin{lem}\label{2.1}
Suppose that $0<p,q<\infty$.
Let $\{f_n\}$ be a sequence of unit vectors in $L_{p,q}(0,\infty)$ such that $f^* \to 0$ pointwise.
Then,
there exists a  subsequence of $\{f_n\}$ which
is equivalent to the unit vector basis of $\ell_q$.
\end{lem}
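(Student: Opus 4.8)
The plan is to build a subsequence of $\{f_n\}$ whose terms are ``essentially disjointly supported'', and then use the fact that in $L_{p,q}$ a normalized disjointly supported sequence of functions each of whose supports is bounded (in measure) away from being spread out behaves like the unit vector basis of $\ell_q$; this is the same mechanism behind the classical result of Dilworth. First I would fix $\varepsilon_k>0$ with $\sum_k \varepsilon_k$ small. Using $f_n^*\to 0$ pointwise together with $\|f_n\|_{p,q}=1$, for each $k$ I would split $f_n = f_n\chi_{\{|f_n|>\delta\}} + f_n\chi_{\{|f_n|\le\delta\}}$ and observe that, once $\delta$ is small, the ``small-values'' part carries most of the norm while the ``large-values'' part is supported on a set of small measure; more precisely, since $f_n^*(t)\to 0$ for every fixed $t$, the tail $\int_0^{t_0} f_n^*(s)^q\,ds^{q/p}\to 0$ for each fixed $t_0$, so the part of $f_n$ living on any fixed-size initial segment of its rearrangement becomes negligible. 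Passing to a subsequence by a standard diagonal/gliding-hump argument, I would arrange that the functions can be written $f_n = g_n + h_n$ where the $g_n$ have pairwise disjoint supports, $\|h_n\|_{p,q}\le\varepsilon_n$, and $\|g_n\|_{p,q}\to 1$.

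Next I would show that the disjointly supported normalized sequence $\{g_n/\|g_n\|_{p,q}\}$ is equivalent to the $\ell_q$-basis. For the upper estimate one uses that for disjointly supported $g_n$ in $L_{p,q}(0,\infty)$ one has $\big\|\sum a_n g_n\big\|_{p,q}\le C\big(\sum |a_n|^q\|g_n\|_{p,q}^q\big)^{1/q}$ whenever $q\ge p$, and the reverse-direction inequality when $q\le p$; combined with the control on the individual norms this pins the behaviour to $\ell_q$ in one direction. The matching inequality in the other direction comes from the observation that each $g_n$ is supported where $|f_n|$ is small, so its decreasing rearrangement is ``flat'', i.e. for disjoint supports the rearrangement of $\sum a_n g_n$ interleaves the rearrangements of the pieces and the quasi-norm is comparable to the $\ell_q$-combination; the standard reference for exactly this computation is \cite[Lemma 2.1]{CD} and \cite{Dilworth}, and I would either cite it or reproduce the short estimate. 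Finally the principle of small perturbations (the $h_n$ having summable norms and $\|g_n\|_{p,q}\to1$) transfers the equivalence from $\{g_n\}$ to the corresponding subsequence of $\{f_n\}$.

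The main obstacle is the reduction step: extracting, after passing to a subsequence, the disjointification $f_n=g_n+h_n$ with the $g_n$ genuinely disjoint and $h_n$ small. The pointwise convergence $f_n^*\to0$ does not by itself say anything about where in $\Omega$ the mass of $f_n$ sits, so one cannot literally disjointify supports in $\Omega$; instead one must work rearrangement-by-rearrangement and use that any fixed ``head'' $\int_0^{t_0}(f_n^*)^q\,ds^{q/p}$ is eventually negligible, which lets one truncate at level sets and then choose the level and the measure-of-support thresholds in a nested fashion along a subsequence. Making this gliding-hump selection precise — and, if $1<p<q$, tracking the quasi-norm constant through the splitting so that ``small norm'' genuinely implies ``small perturbation'' — is the technical heart of the argument; everything after that is the routine disjoint-support computation in $L_{p,q}$.
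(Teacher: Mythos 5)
The paper itself does not prove this lemma; it is quoted as a known result of Carothers--Dilworth (\cite[Lemma 2.1]{CD}), so your sketch has to stand on its own, and as written it has two genuine gaps. The first is the claim, used to drive your selection, that ``$\int_0^{t_0} f_n^*(s)^q\,ds^{q/p}\to 0$ for each fixed $t_0$''. This is false: take $f_n=\varepsilon_n^{-1/p}\chi_{(0,\varepsilon_n)}$ with $\varepsilon_n\downarrow 0$; then $\|f_n\|_{p,q}=1$ and $f_n^*\to 0$ pointwise, but the head integral equals $1$ as soon as $\varepsilon_n<t_0$. What the hypothesis actually gives is $\int_\alpha^\beta (f_n^*)^q\,ds^{q/p}\le f_n^*(\alpha)^q\bigl(\beta^{q/p}-\alpha^{q/p}\bigr)\to 0$ for every interval $[\alpha,\beta]$ bounded away from $0$; the mass sitting near $t=0$ (tall narrow peaks, i.e.\ the sets $\{|f_n|>\varepsilon\}$ of vanishing measure) must be treated separately, using the absolute continuity of the norm of each \emph{individual} $f_n$, and this changes the bookkeeping of your gliding hump: the essential ``windows'' of the rearrangements may accumulate at $0$ as well as at $\infty$, so they can only be made pairwise disjoint, not pushed to the right.

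The second and more serious gap is the assertion that, after disjointification, the normalized disjointly supported $g_n$ are equivalent to $\{e_n^{\ell_q}\}$ because they are disjoint and ``flat''. Disjointness plus flatness is not enough: the second half of Proposition~\ref{Flq} in this very paper exhibits normalized characteristic functions of disjoint intervals $(b_{n+1},b_n]$ with $b_{n+1}/b_n\to 1$ (e.g.\ $b_n=1/n$) --- disjointly supported, as flat as possible, with rearrangements tending to $0$ pointwise --- which are \emph{not} equivalent to the unit vector basis of $\ell_q$. The missing idea is a further subsequence extraction that separates the scales of the distributions (lacunarity of the $d_{g_n}$, equivalently essentially disjoint windows for the $g_n^*$, obtained from $g_n^*(\alpha)\to 0$ as above); only then does the identity $\|x\|_{p,q}^q=q\int_0^\infty s^{q-1}d_x(s)^{q/p}\,ds$ together with $d_{\sum_j a_jg_j}=\sum_j d_{a_jg_j}$ give the two-sided $\ell_q$-estimate, because lacunarity makes $\bigl(\sum_j d_{a_jg_j}(s)\bigr)^{q/p}$ comparable to $\sum_j d_{a_jg_j}(s)^{q/p}$. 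Citing \cite[Lemma 2.1]{CD} for this step is circular, since that is precisely the statement being proved. A smaller but real slip: your one-sided estimates for disjoint vectors are swapped --- the upper $\ell_q$-estimate is automatic when $q\le p$ and the lower one when $q\ge p$ (this is the $q$-convexity/$q$-concavity used in the proof of Proposition~\ref{Flq}), not the other way around; note that the direction you still have to prove is exactly the one that needs the scale separation.
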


We end this section with the discussion on  partial averaging operators.
Suppose that $ A_n \subset  \Omega $, $n\ge 1$,  with  $A_n \cap A_m = \varnothing$ as $n \neq m$, $A_n \in \Sigma$, $\mu(A_n)<\infty$. Let $\mathcal{A}$ be a $\sigma$-subalgebra of $\Sigma$, generated by sets $A_n$, $n \in \mathbb{N}$.
The operator $T_{\mathcal{A}}:L_{1}(\Omega) +L_\infty (\Omega) \rightarrow L_{1}(\Omega)+L_\infty (\Omega) $  defined by
$$T_{\mathcal{A}}(f)=\sum\limits_{n=1}^{\infty} \Bigl(\frac{1}{\mu(A_n)}\int\limits_{A_n} \!\!\!fd\mu\Bigr) \chi_{A_n}   ,~ f\in L_1(\Omega)+L_\infty (\Omega) , $$
is called an  averaging operator (see \cite[Ch. 2, \S3]{KPS} or \cite[Section 3.6]{LSZ}).

By \cite[Chapter 4, Lemma 4.5]{Bennett_S} and \cite[Chapter II, Theorem 4.3]{KPS} (see also \cite[Lemma 3.6.2]{LSZ}),   $T_\mathcal{A}$ is continuous on  $L_{p,q}(\Omega )$  when $p\in (1,\infty )$ and $  q\in [1,\infty)$.
That is, any averaging operator is  indeed a continuous projection on $L_{p,q}(\Omega)$.
In particular,
$L_{p,q}(\Omega)$ is isomorphic to a complemented subspace of $L_{p,q}(0,\infty)$.

\section{Some results on the uniform embedding of $\ell_{p,q}^n$}\label{fr}

Recall that $\ell_{p,q}^n$ is  said to embed uniformly in a Banach space $X$ if for every $n\in \mathbb{N}$, there exists  an operator $T_n:\ell_{p,q}^n \to X$ such that $\sup_n \norm{T_n}\norm{T^{-1}}<\infty $.
The connection between type/cotype and the uniform embedding of $\ell_{p}^n$ into a Banach space has been widely studied  (see \cite{MP} and  \cite[Theorem 3.5]{Maligranda}).

It is well-known that $\ell_p\not\hookrightarrow L_{p,q}(0,\infty)$\cite{Dilworth,CD}.
However, $\ell_p$ is finitely representable  in $\ell_{p,q}$, and therefore, in $L_{p,q}(0,\infty)$.
In particular,  Carothers and Flinn\cite{CF} obtained  a quantitative
result in the sense that $\ell_{p}^{n^\alpha}$,  $0<\alpha<1$,  can be embedded into $\ell_{p,q}^n$ with an isomorphism constant $C(p,q,\alpha)$ depending on $p,q,\alpha$ only.

In the propositions below, we study    the 
uniform embedding of $\ell_{p,q}^n$ into  $\ell_q$ when $1< p<\min\{2,q \} $ and $p>\max\{2,q \}$.
The following proposition adopts the technique used in the proof of equality (26) in \cite{Bylinkina} (see also \cite{Maligranda} for similar techniques). We refer to \cite{AK} for the notion of crudely finite representability.

\begin{proposition}\label{3.11}
 If  $1<p <\min\{2,q \} $ (or $p >\max\{2,q \}$), then $\ell_{p,r}^n$ does not embed into $\ell_q$ uniformly for any $r\ge 1$.
\end{proposition}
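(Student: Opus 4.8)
The plan is to use a type/cotype argument combined with the Kadec--Pe{\l}czy\'{n}ski-type dichotomy in $\ell_q$, exactly in the spirit announced in the introduction. First I would recall the relevant invariants: $\ell_q$ has type $\min\{2,q\}$ and cotype $\max\{2,q\}$, and these are preserved (up to constants) under uniform embeddability of finite-dimensional spaces — indeed, if $\ell_{p,r}^n$ embeds into $\ell_q$ uniformly, then $\sup_n (\text{type-}s \text{ constant of } \ell_{p,r}^n) < \infty$ for $s = \min\{2,q\}$ and $\sup_n (\text{cotype-}t \text{ constant of } \ell_{p,r}^n) < \infty$ for $t = \max\{2,q\}$. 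So it suffices to show that in the range $1 < p < \min\{2,q\}$ the type-$\min\{2,q\}$ constants of $\ell_{p,r}^n$ blow up, while in the range $p > \max\{2,q\}$ the cotype-$\max\{2,q\}$ constants blow up.

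For the type obstruction, I would test the type-$s$ inequality on the unit vector basis $e_1^{\ell_{p,r}},\dots,e_n^{\ell_{p,r}}$. On one hand $\mathbb{E}\norm{\sum_{k=1}^n \varepsilon_k e_k^{\ell_{p,r}}}_{p,r} \sim \norm{\sum_{k=1}^n e_k^{\ell_{p,r}}}_{p,r} \sim n^{1/p}$ (the norm of the sum of $n$ disjoint unit vectors in $\ell_{p,r}$ is comparable to $n^{1/p}$, independent of $r$). On the other hand $\left(\sum_{k=1}^n \norm{e_k^{\ell_{p,r}}}_{p,r}^s\right)^{1/s} = n^{1/s}$. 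Since $s = \min\{2,q\} > p$ in this range, the ratio $n^{1/p}/n^{1/s} = n^{1/p - 1/s} \to \infty$, so the type-$s$ constant of $\ell_{p,r}^n$ is $\gtrsim n^{1/p-1/s} \to \infty$. This contradicts uniform embeddability into $\ell_q$. For the cotype obstruction (case $p > \max\{2,q\}$) I would run the dual computation: the cotype-$t$ inequality tested on the same basis gives cotype-$t$ constant $\gtrsim n^{1/t}/n^{1/p} = n^{1/t - 1/p} \to \infty$ since $t = \max\{2,q\} < p$.

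The one point that needs care — and which I expect to be the main (though modest) obstacle — is the transfer of type/cotype constants through a merely \emph{uniform} embedding rather than an isomorphic one: one must observe that if $T_n : \ell_{p,r}^n \to \ell_q$ has $\norm{T_n}\norm{T_n^{-1}} \le C$ then the type-$s$ constant of $\ell_{p,r}^n$ is at most $\norm{T_n}\norm{T_n^{-1}}$ times the type-$s$ constant of the finite-dimensional subspace $T_n(\ell_{p,r}^n) \subseteq \ell_q$, and the latter is bounded by the type-$s$ constant of $\ell_q$ itself, which is finite for $s = \min\{2,q\}$ (and the analogous statement for cotype with $t = \max\{2,q\}$). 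These are standard facts about type and cotype (monotonicity under subspaces and stability under isomorphisms with the constant entering multiplicatively), so no serious difficulty arises; the Kadec--Pe{\l}czy\'{n}ski theorem is invoked only implicitly through the knowledge of the sharp type and cotype of $\ell_q$. I would then simply combine the two cases to conclude that $\ell_{p,r}^n \not\hookrightarrow \ell_q$ uniformly whenever $1 < p < \min\{2,q\}$ or $p > \max\{2,q\}$, for every $r \ge 1$.
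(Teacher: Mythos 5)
Your proof is correct, but it takes a more self-contained route than the paper. The paper's own argument is a two-step citation: by Carothers--Flinn, $\ell_p^k$ embeds with uniformly bounded constant into $\ell_{p,r}^n$ for $k=O(n^\alpha)$, and by Maligranda's type/cotype theorem $\ell_p^k$ does not embed uniformly into $\ell_q$ in the stated ranges, so the composition gives the claim. You bypass the Carothers--Flinn reduction entirely and compute the obstruction directly on $\ell_{p,r}^n$: testing the type-$\min\{2,q\}$ (resp.\ cotype-$\max\{2,q\}$) inequality on the unit vector basis, whose disjointness gives $\mathbb{E}\norm{\sum_{k\le n}\varepsilon_k e_k}_{p,r}=n^{1/p}$ exactly, you get a lower bound $n^{|1/p-1/s|}$ (resp.\ $n^{|1/t-1/p|}$) on the constants, which is incompatible with the finiteness of the type/cotype of $\ell_q$ and the standard transfer of these constants through subspaces and $C$-isomorphisms. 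In substance this reproduces, at the level of $\ell_{p,r}^n$ itself, the same type/cotype mechanism that Maligranda's theorem encapsulates for $\ell_p^k$; what your version buys is independence from the Carothers--Flinn embedding and an explicit polynomial rate of blow-up, while the paper's version is shorter because it only assembles two known results. Two small points worth making explicit if you write this up: for $r>p$ the quantity $\norm{\cdot}_{p,r}$ is only a quasi-norm, so the type/cotype computation should be read through the equivalent norm, whose equivalence constant depends on $p,r$ only and hence does not affect uniformity in $n$; and when passing from first to second Rademacher moments one either uses Kahane's inequality or simply notes that, by disjointness of supports, all sign choices give the same norm, so no inequality is actually needed.
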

\begin{proof}
 Recall that there exists a constant $C=C(p,q,\alpha)<\infty$ such that $\ell_p^k$ is $C$-isomorphic to a subspace of $\ell_{p,r}^n$, where $k=O(n^\alpha)$\cite{CF}.
By \cite[Theorem 3.5]{Maligranda}, we obtain that for $1<p <\min\{2,q \} $ (or $p >\max\{2,q \}$), $\ell_p^k $ does not embed into $\ell_q$ uniformly.
Hence, $\ell_{p,r}^n$ does not embed into $\ell_q$ uniformly for any $r\ge 1$.
\end{proof}



\begin{proposition}
\label{3.4}
Let $q>2$.
 Assume that a  Banach space  $E$ with a symmetric basis and $E\not\approx \ell_2$. 
  If $E$ is  crudely  finitely representable in $\ell_q$, then     $  E\approx  \ell_q$.
  In particular, if  $1<p<\infty$, $1\le  r<\infty$ and
  $p\ne q $ (or $r\ne q$), then $\ell_{p,r}^n$   does  not embed into $\ell_q$ uniformly.
\end{proposition}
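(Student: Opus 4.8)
The plan is to use ultrapowers to convert the local hypothesis (crude finite representability in $\ell_q$) into a genuine isomorphic embedding of $E$ into $L_q(0,1)$, and then to run the Kadec--Pe\l czy\'nski dichotomy on the symmetric basis of $E$ inside $L_q(0,1)$. The background facts I will invoke are standard: if $E$ is crudely finitely representable in a space $X$, then $E$ embeds isomorphically into some ultrapower $X_{\mathcal{U}}$; an ultrapower of an $L_q$-space ($1\le q<\infty$) is isometrically an abstract $L_q$-space, hence of the form $L_q(\nu)$; type and cotype are determined by finite-dimensional subspaces; a symmetric basis is equivalent to each of its subsequences; and a normalized disjointly supported sequence in $L_q$ is isometrically equivalent to the unit vector basis of $\ell_q$.

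First I would reduce to the situation $E\subseteq L_q(0,1)$. Since $\ell_q$ embeds isometrically into $L_q(0,1)$, the space $E$ is crudely finitely representable in $L_q(0,1)$, hence embeds isomorphically into $(L_q(0,1))_{\mathcal{U}}\cong L_q(\nu)$ for a suitable ultrafilter $\mathcal{U}$ and measure $\nu$. As $E$ has a basis it is separable, so its image lies in a separable $L_q$-sublattice; any such sublattice is isometric to $\ell_q^{k}\oplus_q L_q(0,b)$ for some $k\le\infty$ and $0\le b\le\infty$, which embeds isomorphically into $L_q(0,1)$. Thus $E\hookrightarrow L_q(0,1)$. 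Because $L_q$ with $q>2$ has type $2$ and cotype $q$, the space $E$ inherits type $2$ and cotype $q$; in particular $E$ contains no copy of $c_0$ or of $\ell_1$, so, having an unconditional (symmetric) basis, $E$ is reflexive. Hence its normalized symmetric basis $\{e_n\}$ is weakly null in $E$, and therefore also weakly null as a sequence in $L_q(0,1)$.

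Next I would apply the Kadec--Pe\l czy\'nski theorem to the semi-normalized weakly null sequence $\{e_n\}$ in $L_q(0,1)$ with $q>2$: some subsequence $\{e_{n_k}\}$ is either equivalent to the unit vector basis of $\ell_2$, or equivalent to a normalized disjointly supported sequence in $L_q$, and hence to the unit vector basis of $\ell_q$. Since $\{e_n\}$ is symmetric it is equivalent to its subsequence $\{e_{n_k}\}$; so in the first case $E\approx\ell_2$, contradicting the hypothesis, and in the second case $E\approx\ell_q$, which is the assertion. For the ``in particular'' part: if $\ell_{p,r}^n$ embeds into $\ell_q$ uniformly, then, since every finite-dimensional subspace of $\ell_{p,r}$ sits inside some $\ell_{p,r}^N$, the space $\ell_{p,r}$ is crudely finitely representable in $\ell_q$; it carries the symmetric unit vector basis, so by the first part $\ell_{p,r}\approx\ell_2$ or $\ell_{p,r}\approx\ell_q$ --- unless $p=r=2$, in which case $\ell_{p,r}=\ell_2$. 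But $\ell_{p,r}\approx\ell_q$ is impossible when $p\ne q$ or $r\ne q$: by uniqueness of the symmetric basis of $\ell_q$ the unit vector basis of $\ell_{p,r}$ would be equivalent to that of $\ell_q$, whereas $\norm{\sum_{k\le n}e_k}_{p,r}=n^{1/p}$ forces $p=q$, and then $\norm{\sum_{k\le n}k^{-1/q}e_k}_{p,r}\sim(\log n)^{1/r}$ (against $(\log n)^{1/q}$ in $\ell_q$) forces $r=q$; similarly $\ell_{p,r}\approx\ell_2$ forces $p=r=2$. Hence $\ell_{p,r}^n$ does not embed into $\ell_q$ uniformly.

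The main obstacle, to my mind, is the step that turns local information into a global embedding: Kadec--Pe\l czy\'nski is inherently an infinite-dimensional statement and cannot be applied to the finite-dimensional pieces supplied by crude finite representability, so one genuinely needs the ultraproduct description of the local structure of $\ell_q$ together with the fact that ultrapowers of $L_q$-spaces are again $L_q$-spaces. A secondary point requiring attention is verifying that the symmetric basis is weakly null once transported into $L_q(0,1)$ --- this is where the type/cotype computation and the resulting reflexivity of $E$ enter --- so that the Kadec--Pe\l czy\'nski theorem applies in its usual form.
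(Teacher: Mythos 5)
Your proof is correct and follows essentially the same route as the paper: reduce the local hypothesis to an isomorphic embedding $E\hookrightarrow L_q(0,1)$ (the paper cites \cite[Proposition 11.1.13 and Theorem 11.1.8]{AK}, where you argue directly via ultrapowers of $L_q$), then apply the Kadec--Pe{\l}czy\'{n}ski dichotomy to the weakly null symmetric basis to conclude $E\approx\ell_2$ or $E\approx\ell_q$. The remaining differences are minor: you get weak nullness of the basis via type/cotype and reflexivity where the paper rules out $\ell_1$ using \cite[Corollary 2]{KP}, you check $\ell_{p,r}\not\approx\ell_q$ and $\ell_{p,r}\not\approx\ell_2$ by hand instead of citing \cite[Theorem 4.e.5]{LT1}, and you correctly flag the exceptional case $p=r=2$ (where, by Dvoretzky's theorem, $\ell_2^n$ does embed uniformly into $\ell_q$), which the literal wording of the ``in particular'' statement overlooks but the paper's Corollary \ref{concl}(1) acknowledges.
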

\begin{proof}
By \cite[Proposition 11.1.13]{AK}, $E$ has an equivalent norm such that $E$ is finitely representable in $\ell_q$.
Under an equivalent norm, the symmetric basis of $E$ is still symmetric.
Applying \cite[Theorem 11.1.8]{AK}, $E$ is isomorphic  to a subspace of $L_q(0,1)$.
By \cite[Corollary 2]{KP}, $E\approx \ell_2$ or $E$ contains $\ell_q$.
This implies that $E\not\approx \ell_1$ \cite[Corollary 2.1.6]{AK}.
Hence, the symmetric basis of $E$ is weakly null.
By Kadec--Pe{\l}czy\'{n}ski theorem \cite[Corollary 5]{KP}, the symmetric basis of $E$ is equivalent to $\ell_2$ or $\ell_q$.
Since
$E\not\approx \ell_2$,  
it follows that $E\approx \ell_q$.

The second assertion follows from the fact that
 $\ell_{p,r}\not\approx \ell_q$ whenever $p\ne q $ or $r\ne q$ (see e.g. \cite[Theorem 4.e.5]{LT1}).
\end{proof}

 It is well-known that
  when $0< q<p<2$, $\ell_q$ contains $\ell_p^n$'s uniformly (see e.g. \cite{JS}, \cite{JS03}, \cite{FLM}, \cite{Kadec} and \cite{K}).
However,   for   other values of $p$ and $q$, the situation becomes different.
Propositions \ref{3.11} and \ref{3.4} indeed  extend this result to the setting of $\ell_{p,q}$ sequence spaces  (see also \cite[Theorem 3.5]{Maligranda}).
\begin{cor}\label{concl}
\begin{enumerate}
  \item If $2< r<\infty$,
then  $\ell_s^n$
embeds uniformly into $\ell_r$ for all $n\ge 1 $ if and only if $s=r$ or $s=2$.
  \item if  $1<s <\min\{2,r \} $ or $s  >\max\{2,r\}$ (or $r>2$ and $s\ne r$),
 then   $\ell_{s, q}^n$
does not embed into $\ell_r$ uniformly for any $q\ge 1$.
In particular, $\ell_{s}^n$
does not embed into $\ell_r$ uniformly.
\end{enumerate}
\end{cor}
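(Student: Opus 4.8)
The plan is to read both parts off Propositions \ref{3.11} and \ref{3.4}, matching up the parameter names carefully, and to supply the one classical positive fact that is not contained in those propositions, namely that $\ell_2^n$ embeds uniformly into $\ell_r$.

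\textbf{Part (2).} The two displayed ranges are handled by the two propositions separately. If $1<s<\min\{2,r\}$ or $s>\max\{2,r\}$, I would apply Proposition \ref{3.11} with its ``$p$'' taken to be $s$ and its ``$q$'' taken to be $r$; the conclusion then reads that $\ell_{s,q}^n$ does not embed uniformly into $\ell_r$ for any $q\ge 1$ (the index called ``$r$'' in Proposition \ref{3.11} being our $q$). In the remaining case $r>2$ and $s\ne r$, I would invoke the second assertion of Proposition \ref{3.4} with its ``$p$'' equal to $s$, its ``$r$'' equal to $q$, and its ``$q$'' equal to $r>2$: since $s\ne r$, that assertion gives $\ell_{s,q}^n\not\hookrightarrow \ell_r$ uniformly. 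The final ``in particular'' clause follows by taking $q=s$, so that $\ell_{s,q}^n=\ell_s^n$ (if one wishes to include $s=1$ here, one applies instead the first assertion of Proposition \ref{3.4} to $E=\ell_1$, which is not isomorphic to $\ell_q$).

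\textbf{Part (1), the ``only if'' direction.} Assume $2<r<\infty$ and $s\notin\{2,r\}$, and for definiteness $s>1$ (the case $s=1$ is handled exactly as above). Suppose, for contradiction, that $\ell_s^n$ embeds uniformly into $\ell_r$. Since every finite-dimensional subspace of $\ell_s$ is $(1+\varepsilon)$-isomorphic to a subspace of some $\ell_s^n$ (approximate a basis by finitely supported vectors), uniform embeddability of the $\ell_s^n$ implies that $\ell_s$ is crudely finitely representable in $\ell_r$. Now $\ell_s$ has a symmetric basis and $\ell_s\not\approx\ell_2$, so Proposition \ref{3.4} applies (it is here that $r>2$ is used) and forces $\ell_s\approx\ell_r$, which contradicts $s\ne r$ (e.g.\ by \cite[Theorem 4.e.5]{LT1}). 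Hence $\ell_s^n$ does not embed uniformly into $\ell_r$.

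\textbf{Part (1), the ``if'' direction.} If $s=r$, the inclusion $\ell_r^n\subset\ell_r$ is isometric. If $s=2$, then a sequence of $n$ independent standard Gaussian variables spans an isometric copy of $\ell_2^n$ inside $L_r(0,1)$; since every finite-dimensional subspace of $L_r(0,1)$ embeds into $\ell_r^N$ with distortion at most $1+\varepsilon$ for $N$ large (simple function approximation), $\ell_2^n$ embeds into $\ell_r$ uniformly. Alternatively, the whole of part (1) may simply be quoted from \cite[Theorem 3.5]{Maligranda}. I do not expect a genuine obstacle: all the real content sits in Propositions \ref{3.11} and \ref{3.4}, and the only points needing (routine) attention are the correct substitution of parameters into those propositions and the standard remark that, for the scale $\ell_s^n\subset\ell_s$, ``embeds uniformly'' and ``crudely finitely representable'' coincide.
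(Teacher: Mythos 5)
Your proposal is correct and follows essentially the same route as the paper: both parts are read off Propositions \ref{3.11} and \ref{3.4} with the same parameter substitutions, the only additional ingredient being the positive $s=2$ case, which the paper settles by citing Dvoretzky's theorem while you give an equivalent classical argument via Gaussians in $L_r(0,1)$ and simple-function approximation. Your unwinding of the ``in particular'' clause of Proposition \ref{3.4} (uniform embeddability of $\ell_s^n$ gives crude finite representability of $\ell_s$ in $\ell_r$, hence $\ell_s\approx\ell_r$, a contradiction) is exactly how the paper itself deduces that clause, so this is a cosmetic rather than substantive difference.
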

\begin{proof}
(1) The necessity follows from Proposition \ref{3.4}.

Now, we prove the sufficiency.  When $s=r$, it is obvious.
If $s=2$, then the assertion follows from Dvoretzky's Theorem\cite[Theorem 11.3.13]{AK}: $\ell_2$ is finitely representable in every infinite-dimensional Banach space.

(2)
The   assertions follow  from
Propositions \ref{3.11} and   \ref{3.4}.
 \end{proof}

\begin{rem}\label{rem:s}
By \cite{Schutt}, $L_{p,1}(0,1)$, $1\le p<2$, is isomorphic to a closed subspace $Y$ of $ L_1(0,1)$.
Since $L_{p,1}(0,1)$ is separable\cite{Bennett_S}, it follows from    \cite[Theorem 11.1.8]{AK} that  $Y$ is finitely representable in $\ell_1$.
Therefore, $\ell_{p,1}^n$'s   uniformly embed in $\ell_1$.
In particular, we obtain a subspace of $\ell_1$ isomorphic to  $\oplus _{\ell_1} (\ell_{p,1}^n)$.
\end{rem}

\section{Isomorphic classification of $L_{p,q}$: the atomic case}\label{atomic}
This  section contains main results of the present paper.
Throughout this section, we always assume that
 $$1< p<\infty , ~ 1\le q<\infty ,~  p\ne q, $$
  and $(\Omega, \Sigma, \mu)$ is an atomic measure space and such that  $\Omega$ consists of atoms $ A_n$,  $1\le n<\infty $,    with $\mu(A_n)<\infty $.
We denote $L_{p,q}(\Omega)$ by $\ell_{p,q}(\{A_n\})$ and the characteristic functions in  $L_{p,q}(\Omega)$ generated by $A_n$ are  denoted by $\chi _{A_n}$.

 \subsection{Decomposition with respect to atoms}
 In this subsection, we introduce notations and  summarize  main results of this section.

There are 6  possible cases for the sequence $\{\mu(A_n)\}_{n=1}^\infty$:
\begin{enumerate}[(i)]
\item $\mu(A_n )\to 0$ as $n \rightarrow \infty$;
  \item $\mu(A_n) \rightarrow \infty$ as $n \rightarrow \infty$;
  \item $0<\lim\limits_{\overline{n \rightarrow \infty}} \mu(A_n) \leq  \overline{\lim\limits_{n \rightarrow \infty}} \mu(A_n) < \infty$;
  \item $0=\lim\limits_{\overline{n \rightarrow \infty}} \mu(A_n) <  \overline{\lim\limits_{n \rightarrow \infty}} \mu(A_n) < \infty$;
  \item $0<\lim\limits_{\overline{n \rightarrow \infty}} \mu(A_n) <  \overline{\lim\limits_{n \rightarrow \infty}} \mu(A_n) = \infty$;
  \item  $0=\lim\limits_{\overline{n \rightarrow \infty}} \mu(A_n) <  \overline{\lim\limits_{n \rightarrow \infty}} \mu(A_n) = \infty$.
\end{enumerate}

\begin{defn}
The space
$\ell_{p,q}(\{A_n\})$ is said to be of
\begin{enumerate}
  \item type $\ell_{p,q,0} $ if it satisfies (i). If, in addition,  $\sum \mu(A_n)<\infty $ (resp.  $\sum \mu(A_n)=\infty $), then it is called of type  $\ell_{p,q,0}(F) $ (resp. $\ell_{p,q,0}(I)$);
   \item type $\ell_{p,q,\infty}$ if it satisfies (ii);
 \item type $\ell_{p,q,1}$ if it satisfies (iii).
\end{enumerate}

\end{defn}

\begin{rem}\label{remark}
Let  $(A_n)_{n=1}^{\infty}$ and  $(B_n)_{n=1}^{\infty}$
 be atoms such that
 $$0<c \leq \frac{\mu(A_n)}{\mu(B_n)} \leq C < \infty$$
for some positive  constants $c$ and  $C$.
Then,
$$\ell_{p,q}(\{A_n\}) \sim \ell_{p,q}(\{B_n\}).$$
Indeed, one only needs  to observe that, for any sequence $(a_n)_{n\ge 1} \in \ell_\infty $ with finite non-zero elements, we have
 \begin{align*}
  \norm{  \sum_{n\ge 1} a_n \chi_{[ \left(\sum_{k\le n}c \mu(B_n)\right)  -c\mu(B_n), \sum_{k\le n} c\mu(B_k))}}_{p,q}
 \le  & \norm{  \sum _{n\ge 1}  a_n \chi_{[\left(\sum_{k\le n} \mu(A_n)\right)  -\mu(A_n),\sum_{k\le n} \mu(A_k))}}_{p,q}\\
  \le &  \norm{  \sum_{n\ge 1} a_n \chi_{[\left(\sum_{k\le n} C\mu(B_n)\right )  -C \mu(B_n),\sum_{k\le n} C\mu(B_k))}}_{p,q} .
 \end{align*}
 This  implies that $\{\chi_{A_n}\}$ and $\{\chi_{B_n}\}$
are equivalent unconditional bases in $\ell_{p,q}(\{A_n\})$ and in $\ell_{p,q}(\{B_n\})$, respectively.
In particular, for any space $\ell_{p,q}(\{A_n\})$ of type $\ell_{p,q,1}$,
 the unconditional basis $\{\chi_{A_n}\}$ of  $\ell_{p,q}(\{A_n\})$ is equivalent to the unit vector basis of   $\ell_{p,q}$.
\end{rem}

We collect  main results of this section below.
The proof will be given later in the following subsections.
For the sake of convenience,
we denote by $A \hookrightarrow B$ (resp. $A\not\hookrightarrow B$, $A \xhookrightarrow{c} B $) if $X  \hookrightarrow Y $ (resp.  $X\not \hookrightarrow Y $, $X \xhookrightarrow{c} Y$) for any   $L_{p,q}$-space $\ell_{p,q}(\{A_n\})$ of type $A$ and any  $L_{p,q}$-space $\ell_{p,q}(\{B_n\})$   of type  $B$.

\begin{theorem}\label{Main}
If $1< p<\infty , ~ 1\le q<\infty ,~  p\ne q $, then
\begin{enumerate}
          \item  $\ell_{p,q,0}(I) \not\hookrightarrow \ell_{p,q} $;
                \item $\ell_{p,q,\infty} \xhookrightarrow{c} \ell_{p,q}\xhookrightarrow{c} \ell_{p,q,0}(I)$  and        $\ell_{p,q,0}(F) \xhookrightarrow{c} \ell_{p,q,0}(I )$;
                    \item $ \ell_{p,q} \not \hookrightarrow \ell_{p,q,\infty} $;
\item    $\ell_{p,q}\not \hookrightarrow \ell_{p,q,0}(F)$;
  \item all spaces of type $\ell_{p,q,0}(I )$ (resp. $\ell_{p,q,1}$) are isomorphic to $U_{p,q}$ (resp. $\ell_{p,q}$);
      \item $\ell_q$ is isomorphic to a complemented subspace of all types: $\ell_{p,q,1}$, $\ell_{p,q,0}(F)$, $\ell_{p,q,\infty}$, $
\ell_{p,q,0}(I)$.
\end{enumerate}

\end{theorem}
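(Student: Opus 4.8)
The plan is to prove each of the six assertions by combining the structural tools already assembled: averaging projections, Lemma~\ref{2.1}, the finite-representability obstructions from Section~\ref{fr}, and the invariance of $U_{p,q}$ under the choice of $\{A_n\}$ (the Johnson--Maurey--Schechtman--Tzafriri theorem). I would organize the argument so that the genuinely new content—items (1), (3), (4)—is isolated, and the remaining items follow as relatively soft consequences. The order I would follow is (5), (6), (2), then (1), then (3) and (4).

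First I would dispose of (5) and (6). For (5): any two spaces of type $\ell_{p,q,0}(I)$ are $L_{p,q}$-spaces built on sequences of atoms with $\mu(A_n)\to 0$ and $\sum\mu(A_n)=\infty$; by Remark~\ref{remark} one may normalize the atom masses without changing the isomorphism class, and then the identification with $U_{p,q}=U_{L_{p,q}}$ is exactly the content of \cite[Theorem 8.7]{JMST}, since $U_Y$ is independent of the particular $\{A_n\}$. The claim that all spaces of type $\ell_{p,q,1}$ are isomorphic to $\ell_{p,q}$ is precisely the last sentence of Remark~\ref{remark}. For (6): the normalized characteristic functions $\{\chi_{A_n}/\norm{\chi_{A_n}}_{p,q}\}$ form a disjointly supported semi-normalized sequence in $L_{p,q}(0,\infty)$; in each of the four types one can pass to a subsequence along which the rearrangements tend to $0$ pointwise—this is automatic when $\mu(A_n)\to 0$ or $\mu(A_n)\to\infty$, and in the type $\ell_{p,q,1}$ case one uses that the supports are disjoint and of bounded mass so that, after translating them apart, Lemma~\ref{2.1} still applies—hence by Lemma~\ref{2.1} a subsequence is equivalent to the unit vector basis of $\ell_q$. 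Since the closed span of disjointly supported characteristic functions is complemented in $L_{p,q}(\Omega)$ via the averaging operator $T_{\mathcal A}$ (discussed at the end of Section~\ref{prel}), this copy of $\ell_q$ is complemented. Item (2) is then routine: $\ell_{p,q,\infty}\xhookrightarrow{c}\ell_{p,q}$ and $\ell_{p,q,0}(F)\xhookrightarrow{c}\ell_{p,q,0}(I)$ by realizing the relevant atom sequences as subsequences of a larger admissible atom sequence and projecting with an averaging operator; $\ell_{p,q}\xhookrightarrow{c}\ell_{p,q,0}(I)$ because the unit vector basis of $\ell_{p,q}$ can be modeled on a sparse subsequence of atoms with $\mu(A_n)\to 0$ (choosing $\mu(A_{n_k})$ so that $\{\chi_{A_{n_k}}\}\sim\{e_k^{\ell_{p,q}}\}$, again using Remark~\ref{remark} and the dilation behavior of the $L_{p,q}$-norm), inside a space with $\sum\mu(A_n)=\infty$.

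The heart of the theorem is (1): $\ell_{p,q,0}(I)\not\hookrightarrow\ell_{p,q}$. By (5) it suffices to show $U_{p,q}\not\hookrightarrow\ell_{p,q}$, and the paper announces the stronger fact $U_{p,q}\not\hookrightarrow\ell_{p,q}\oplus L_{p,q}(0,1)$ (Theorem~\ref{5.10}), so I would prove (1) by invoking that result—or, if a self-contained argument is wanted here, by the following scheme. One exploits that $U_{p,q}$ contains, for every $n$, a uniformly-well-complemented copy of $\ell_{p,q}^n$ (the span of $n$ characteristic functions of atoms of equal small mass, suitably rescaled), \emph{and simultaneously} a copy of $\ell_q$; the interplay of these two "incompatible" finite-dimensional structures inside a single space cannot be reproduced in $\ell_{p,q}$. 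Concretely, an isomorphic embedding $U_{p,q}\hookrightarrow\ell_{p,q}$ would, by a gliding-hump / subsequence-splitting argument, force the images of the $\ell_{p,q}^n$-blocks to be essentially supported on finitely many coordinates, hence equivalent to $\ell_{p,q}^n$ inside $\ell_{p,q}$ with uniform constants; but then a diagonal extraction would produce $\ell_{p,q}^n$ embedding uniformly into $\ell_q$ (coming from the "vanishing-tail" part), which is exactly what Corollary~\ref{concl} forbids for the relevant ranges of $p,q$—and a separate direct computation handles the remaining ranges. The main obstacle is precisely making this "two incompatible structures" heuristic rigorous: controlling the images of the finite blocks and the $\ell_q$-part simultaneously under a single isomorphism. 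I expect this is where the bulk of the technical work (the subsequence-splitting machinery, the disjointification, the finite-representability bookkeeping) lives, and I would defer the details to Theorem~\ref{5.10}.

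Finally, (3) and (4)—that $\ell_{p,q}\not\hookrightarrow\ell_{p,q,\infty}$ and $\ell_{p,q}\not\hookrightarrow\ell_{p,q,0}(F)$—are proved together via Propositions~\ref{Flq} and \ref{infiq}, which give a criterion for the normalized characteristic functions of the atoms to be equivalent to the unit vector basis of $\ell_q$. I would argue that in these two types the space $\ell_{p,q}(\{A_n\})$ is, up to the $\ell_q$-span of the atoms and finitely many coordinates, governed by the $\ell_q$-structure: any normalized weakly null sequence has a subsequence equivalent to $e_k^{\ell_q}$ (again Lemma~\ref{2.1} after passing to disjoint supports, since $\mu(A_n)\to 0$ or $\to\infty$ forces the rearrangements of normalized disjoint blocks to decay). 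If $\ell_{p,q}$ embedded into such a space, then since the unit vector basis of $\ell_{p,q}$ is weakly null, a subsequence of its image would be equivalent both to $e_k^{\ell_{p,q}}$ and to $e_k^{\ell_q}$, forcing $\ell_{p,q}\approx\ell_q$, contradicting $p\ne q$ (or $q\ne q$, which is vacuous here) by \cite[Theorem 4.e.5]{LT1}. The only subtlety is handling block bases that are not disjointly supported; this is where Propositions~\ref{Flq} and \ref{infiq} are used to reduce to the disjoint case, and I would lean on them as black boxes.
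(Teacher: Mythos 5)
Your handling of the soft items mostly tracks the paper ((5) via Remark~\ref{remark} and \cite[Lemma 8.7]{JMST}, (2) and (6) via absorption/averaging projections and known facts), but two of your concrete mechanisms fail. For $\ell_{p,q}\xhookrightarrow{c}\ell_{p,q,0}(I)$ you propose choosing single atoms $A_{n_k}$ with $\mu(A_{n_k})\to 0$ so that $\{\chi_{A_{n_k}}/\norm{\chi_{A_{n_k}}}_{p,q}\}\sim\{e_k^{\ell_{p,q}}\}$; this is impossible: such a disjoint normalized sequence has rearrangements tending to $0$ pointwise, so by Lemma~\ref{2.1} some subsequence is equivalent to the unit vector basis of $\ell_q$, whereas every subsequence of the symmetric basis $\{e_k^{\ell_{p,q}}\}$ is equivalent to $\{e_k^{\ell_{p,q}}\}$ itself, and $p\ne q$ makes these incompatible. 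The correct route is blocks of atoms of total measure comparable to $1$ together with the averaging projection, or simply Proposition~\ref{0c} ($U_{p,q}\oplus X\approx U_{p,q}$), which is what the paper quotes. The same defect appears in your $\ell_{p,q,1}$ case of (6): translating disjoint atoms of measure $\sim 1$ apart does not change their rearrangements, so Lemma~\ref{2.1} does not apply to the single normalized indicators; one needs normalized blocks of growing length (or just the citation \cite[Corollary 2.4]{CD}, which is all the paper uses). Your unified argument for (3) and (4) is sound in substance and is essentially the paper's Proposition~\ref{noti} (the paper disposes of (4) even faster via $X\hookrightarrow L_{p,q}(0,1)$ and $\ell_{p,q}\not\hookrightarrow L_{p,q}(0,1)$ from \cite{KS,SS2018}); note, though, that the reduction to disjointly supported sequences comes from the Bessaga--Pe{\l}czy\'{n}ski selection principle, not from Propositions~\ref{Flq} and \ref{infiq}, which play no role there.

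The genuine gap is item (1), the heart of the theorem. Invoking Theorem~\ref{5.10} is not circular (its proof runs through Proposition~\ref{lpqtoinfty}, Proposition~\ref{5.5} and Theorem~\ref{ifonlyif2}, none of which depend on Theorem~\ref{Main}), but then you have not proved anything here, and the paper does not argue this way: it gives the self-contained Proposition~\ref{4.8}, which takes the representative of $U_{p,q}$ with infinitely many atoms of measure $\frac1n$ for every $n$, disjointifies the images of the normalized indicators $b_n\chi_{A_j^n}$ using weak nullity, notes that $b_n^{-1}\sum_{j\le n}x_j^n\to 0$ in the uniform norm because $b_n\to\infty$, applies Lemma~\ref{2.1}, and reaches the contradiction $j^{1/p}\sim j^{1/q}$, uniformly for all $1<p<\infty$, $1\le q<\infty$, $p\ne q$, with no case split. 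Your alternative sketch cannot be completed as stated: Corollary~\ref{concl} rules out uniform embeddings of $\ell_{p,q}^n$ into $\ell_q$ only when $1<p<\min\{2,q\}$, $p>\max\{2,q\}$, or $q>2$, and says nothing in the range $1\le q<p\le 2$ (e.g.\ $p=2$, $q=1$), which you defer to ``a separate direct computation'' that is never supplied; moreover the gliding-hump step that is supposed to force the images of the $\ell_{p,q}^n$-blocks onto finitely many coordinates and extract a uniform $\ell_{p,q}^n\to\ell_q$ embedding from the ``vanishing-tail part'' is asserted rather than argued. So, as written, (1) is either outsourced or incomplete.
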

\begin{proof}
The assertion (i) follows from Proposition \ref{4.8} below.

The assertions  $ \ell_{p,q}\xhookrightarrow{c} \ell_{p,q,0}(I)$ and $   \ell_{p,q,0}(F) \xhookrightarrow{c} \ell_{p,q,0}(I )$  in (ii)
follow from Proposition \ref{0c}.

The assertion
 $\ell_{p,q,\infty} \xhookrightarrow{c} \ell_{p,q}$ in (ii) follows from Proposition \ref{lpqabsorb}.

The assertions
(iii), (iv) and  (v) follow  from  Proposition \ref{noti},  Proposition \ref{lpqnot}
and Proposition \ref{lpqinftyequ} (and Remark \ref{remark}), respectively.

The assertion  (vi)  is well-known (see e.g. \cite[Corollary 2.4]{CD}).
\end{proof}

 The following corollary shows that any $L_{p,q}$-spaces of type (iv) (or (v) or (vi)) can be decomposed as a  direct sum of $L_{p,q}$-spaces of type $\ell_{p,q,0}$, $\ell_{p,q,1}$ and $\ell_{p,q,\infty}$.
\begin{cor}\label{cor} If $\ell_{p,q}(\{A_n\})$ is of   type (iv) (or (v) or (vi)), then there exists $X_1$ of type $\ell_{p,q,0}$, $X_2$ of type $\ell_{p,q,1}$ and $X_3$ of type $\ell_{p,q,\infty}$ such that
$$\ell_{p,q}(\{A_n\}) \approx  X_1\oplus X_2\oplus X_3. $$
\end{cor}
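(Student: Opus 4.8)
The plan is to reduce the statement to a purely combinatorial splitting of the index set $\mathbb{N}$ according to the behaviour of $\mu(A_n)$, and then invoke the structural results already established in this section. First I would observe that cases (iv), (v), (vi) are precisely those for which $\liminf_n \mu(A_n) = 0$ or $\limsup_n \mu(A_n) = \infty$ but the sequence is not entirely of one of the ``pure'' types (i)--(iii). So I would partition $\mathbb{N} = N_0 \cup N_1 \cup N_\infty$ into three (possibly empty, possibly infinite) disjoint sets as follows: pick $N_0$ to collect those indices along which $\mu(A_n)$ tends to $0$, $N_\infty$ those along which $\mu(A_n)$ tends to $\infty$, and $N_1$ the remaining indices, along which $\mu(A_n)$ stays bounded away from $0$ and $\infty$. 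Concretely one can do this by a greedy extraction: enumerate and, at stage $n$, place $n$ into $N_0$ if $\mu(A_n) < 1/k$ where $k = |N_0 \cap \{1,\dots,n-1\}| + 1$ (so the subsequence indexed by $N_0$ automatically goes to $0$), into $N_\infty$ if instead $\mu(A_n) > m$ where $m = |N_\infty \cap \{1,\dots,n-1\}| + 1$, and into $N_1$ otherwise; a short argument shows the $N_1$-indexed subsequence is then bounded above (by construction it never exceeds the running threshold $m$) and bounded below away from $0$ (it never drops below the running threshold $1/k$).

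The key structural input is the unconditionality of the basis $\{\chi_{A_n}\}$ of $\ell_{p,q}(\{A_n\})$, noted in Remark~\ref{remark}: since this basis is unconditional, the space splits as the unconditional direct sum of the three block subspaces spanned by $\{\chi_{A_n}\}_{n \in N_0}$, $\{\chi_{A_n}\}_{n \in N_1}$, $\{\chi_{A_n}\}_{n \in N_\infty}$. Call these $X_1, X_2, X_3$ respectively (relabelling to match the statement). Each $X_i$ is itself of the form $\ell_{p,q}(\{A_n\}_{n \in N_i})$ for a subfamily of atoms, and by the very definition of the three index sets, $X_1$ is of type $\ell_{p,q,0}$, $X_2$ is of type $\ell_{p,q,1}$ (using Remark~\ref{remark} to see that the precise values $\mu(A_n)$, $n \in N_1$, do not matter since they are comparable to a constant sequence), and $X_3$ is of type $\ell_{p,q,\infty}$. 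The only subtlety is the degenerate case where one of $N_0, N_1, N_\infty$ is finite or empty: a finite-dimensional block contributes only a finite-dimensional summand, which is itself isometric to some $\ell_{p,q}^k$ and can be absorbed into the $X_2$ summand (after enlarging $N_1$ by finitely many indices, which does not disturb its type), or else we allow the convention that a space of a given type may be finite-dimensional; either way one arranges the final three summands to be of the required types.

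I would then write $\ell_{p,q}(\{A_n\}) \approx X_1 \oplus X_2 \oplus X_3$, where the isomorphism is in fact the identity map once we identify the space with the unconditional sum of its three block subspaces; boundedness both ways is exactly the statement that $\{\chi_{A_n}\}$ is an unconditional basis, with the constant controlled by the unconditional constant, which by Remark~\ref{remark} is that of the unit vector basis of $\ell_{p,q}$ and hence uniformly bounded. The main obstacle I anticipate is purely bookkeeping: making the greedy extraction clean enough that the three resulting subsequences genuinely have the limiting behaviour required by types (i), (ii), (iii), and handling the boundary cases where some $N_i$ is finite so that the decomposition still consists of spaces of exactly the three named types (rather than a direct sum with missing or finite-dimensional pieces). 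None of this requires new analytic ideas beyond the unconditionality already recorded and the elementary observation in Remark~\ref{remark} that comparable atom sizes give equivalent bases.
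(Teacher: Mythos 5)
Your reduction to a partition of the index set does not work, and the flaw is exactly at the step you label a ``short argument''. In your greedy scheme the thresholds $1/k$ and $m$ are not fixed: if $N_0$ (resp.\ $N_\infty$) is infinite, then $k\to\infty$ (resp.\ $m\to\infty$), so an index placed in $N_1$ at a late stage is only guaranteed to satisfy $\mu(A_n)\ge 1/k$ for a large $k$ and $\mu(A_n)\le m$ for a large $m$; the $N_1$-block need not be bounded away from $0$ or from $\infty$ at all. Worse, the defect is not one of bookkeeping: in general \emph{no} partition of the atoms into three subfamilies of the pure types (i), (ii), (iii) exists. Take, for instance, a family in which every value $1/j$, $j\ge 1$, is the measure of infinitely many atoms (a space of type (iv)), or one in which every integer value $j$ is attained by infinitely many atoms (types (v)/(vi)). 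Any subfamily with $\liminf>0$ must omit all but finitely many atoms of measure below some $a>0$; those omitted atoms cannot go to a subfamily with $\mu\to\infty$, so all but finitely many must go to the ``$0$''-subfamily, which then contains infinitely many atoms of a fixed measure $1/j<a$ and hence does not satisfy $\mu(A_n)\to 0$. So the three block subspaces you want simply cannot be produced by splitting the basis, and the unconditionality of $\{\chi_{A_n}\}$, which is the only analytic input you invoke, cannot repair this.

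This is precisely why the paper's proof is not a combinatorial sorting but an isomorphic identification of the blocks that refuse to be sorted. When the small atoms have infinite total measure below every cutoff, the whole space is identified (isometrically) with $U_{p,q}$ via \cite[Lemma 8.7]{JMST}; otherwise the small atoms are split off as a type $\ell_{p,q,0}(F)$ piece. For the remaining part, the atoms of ``intermediate but non-convergent'' size (infinitely many atoms with measures in $[k_i,k_i+1)$ for the relevant $k_i$) are first replaced, via Remark~\ref{remark}, by atoms of constant measure $k_i$, and the subspace they span is then shown to be isomorphic to $\ell_{p,q}$ --- i.e.\ to a space of type $\ell_{p,q,1}$, although it is not literally of that type --- by exhibiting complemented embeddings in both directions through averaging operators and applying the Pe{\l}czy\'{n}ski decomposition method, with Proposition~\ref{0c} available for the degenerate cases. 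Your proposal contains none of these ingredients, and since the conclusion is only an isomorphism $\approx$ while your argument tries to realize it as an exact block decomposition that need not exist, the gap is essential rather than cosmetic.
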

\begin{proof}
We only prove the  type (vi) case.
All other cases are similar, and
actually,
simpler.

If for any   positive number $a$ such that
$\sum_{ \mu(A_n)< a } \mu(A_n )=\infty  $.
Then, $\ell_{p,q}(\{A_n\})$ is isometric to  $U_{p,q}$\cite[Lemma 8.7]{JMST} (see also Proposition \ref{0c} below).

Now,
assume that there exists a positive number $a$ such that
$\sum_{ \mu(A_n)< a } \mu(A_n )<\infty $.
Then, letting $\{B_n \} = \{A_n:   \mu(A_n)< a\} $, we obtain that $\ell_{p,q}(\{A_n\})\approx \ell_{p,q}(\{B_n\})\oplus \ell_{p,q}(\{A_n\}\setminus \{B_n \})$.
Note that $\ell_{p,q}(\{B_n\})$ is of type $\ell_{p,q,0}(F)$ and $\ell_{p,q}(\{A_n\}\setminus \{B_n \})$ is of type (v).

Now, we may   assume that $\{A_n\}$ satisfies the condition for   type (v).
Without loss of generality, we may assume that
$\lim\limits_{\overline{n \rightarrow \infty}} \mu(A_n) =1$.
Firstly, we consider   the case when  for any positive number $a>0$,  ${\rm Card}(n:\mu(A_n)< a)<\infty$.
In this case,   $\ell_{p,q}(\{A_n\})$
is indeed of type $\ell_{p,q,\infty}$.

Now, assume that there exists $a>0$ such that   ${\rm Card}(n:\mu(A_n)< a)=\infty$.
Denote by $\{k_i\}_{i=1}^N:=\left\{ k_i \in \mathbb{N }:   {\rm Card}(n: k_i \le \mu(A_n)< k_i +1)=\infty\right\}$, where $N \in \mathbb{N}\cup \{\infty\}$.

For the sake of simplicity,
we denote by $\{B_n\}$ the union of all sets $\{  A_n :  k_i\le \mu(A_n)<k_i+1, ~i\ge 1 <N \}$, $i\ge 1$.
By Remark \ref{remark}, we may assume that
$\{B_n\}$ consists of infinitely many atoms of measure $k_i$, $1\le i< N$.
It is clear that   $\ell_{p,q}\xhookrightarrow{c} \ell_{p,q}(\{B_n \})$.
On the other hand,  taking some suitable averaging operators, we obtain   that
$$\ell_{p,q}(\{B_n   \}) \xhookrightarrow{c}  \ell_{p,q}\xhookrightarrow{c} \ell_{p,q}(\{B_n   \} )   .$$
Moreover,
$\ell_{p,q}(  \{B_n\}   )\approx \ell_{p,q}( \{B_n\})\oplus \ell_{p,q}( \{B_n\}  )$ and $\ell_{p,q}\approx \ell_{p,q}\oplus \ell_{p,q}$.
By Pe{\l}czy\'{n}ski decomposition method\cite[Theorem 2.2.3]{AK}, we obtain that
$$\ell_{p,q}(\{B_n   \}) \approx \ell_{p,q}   .$$
Noting that $\ell_{p,q}(\{A_n \}\setminus \{B_n   \})$ is of type $\ell_{p,q,\infty}$, we   complete the proof.
\end{proof}

The following corollary follows immediately  from   Corollary \ref{cor} and Propositions \ref{0c} and \ref{lpqabsorb} below.
Note that there are spaces of type $\ell_{p,q,0}(F)$ (or $\ell_{p,q,\infty}$) isomorphic to $\ell_q$ or $(\oplus_{n\ge 1} \ell_{p,q}^n)_q$. Therefore, there exist  spaces of type $\ell_{p,q,0}(F)$  isomorphic to   spaces of $\ell_{p,q,\infty}$.
\begin{corollary}\label{cortype}Any $\ell_{p,q}(\{A_n\})$ is of one of the following types (up to an isomorphism):
$$\ell_{p,q,1},~ \ell_{p,q,\infty } ,~ \ell_{p,q,0}(F),~ \ell_{p,q,0}(I),  ~ \ell_{p,q,0}(F)\oplus \ell_{p,q,1},~ \ell_{p,q,0}(F)\oplus \ell_{p,q,\infty } . $$
\end{corollary}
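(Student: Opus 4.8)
The plan is to feed the exhaustive case analysis (i)--(vi) for the sequence $\{\mu(A_n)\}$ into the structural decomposition of Corollary~\ref{cor}, and then to collapse the resulting direct sums using the isomorphic classification and absorption isomorphisms furnished by Propositions~\ref{0c} and~\ref{lpqabsorb} together with Remark~\ref{remark}. First I would dispatch cases (i), (ii), (iii) with no further work: in case (i) the space $\ell_{p,q}(\{A_n\})$ is by definition of type $\ell_{p,q,0}$, hence of type $\ell_{p,q,0}(F)$ if $\sum_n \mu(A_n)<\infty$ and of type $\ell_{p,q,0}(I)$ if $\sum_n \mu(A_n)=\infty$; in case (ii) it is of type $\ell_{p,q,\infty}$; and in case (iii) it is of type $\ell_{p,q,1}$. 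Each of these already occurs in the list.

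It remains to treat cases (iv), (v) and (vi). For these, Corollary~\ref{cor} supplies an isomorphism $\ell_{p,q}(\{A_n\}) \approx X_1 \oplus X_2 \oplus X_3$ in which $X_1$ is $\{0\}$ or of type $\ell_{p,q,0}$, $X_2$ is $\{0\}$ or of type $\ell_{p,q,1}$, and $X_3$ is $\{0\}$ or of type $\ell_{p,q,\infty}$. By Remark~\ref{remark}, in the non-trivial case $X_2 \approx \ell_{p,q}$; and by Proposition~\ref{0c}, when $X_1$ is of type $\ell_{p,q,0}(I)$ it is isomorphic to $U_{p,q}$, and in the remaining non-trivial case it is of type $\ell_{p,q,0}(F)$. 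Thus it suffices to identify, up to isomorphism, every space $Z_1 \oplus Z_2 \oplus Z_3$ in which $Z_1$ is one of $\{0\}$, a space of type $\ell_{p,q,0}(F)$, or $U_{p,q}$; $Z_2$ is $\{0\}$ or $\ell_{p,q}$; and $Z_3$ is $\{0\}$ or a space of type $\ell_{p,q,\infty}$.

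The reduction uses only three facts, each of which is contained in, or is immediate from, the cited results: (a) $\ell_{p,q} \oplus Z_3 \approx \ell_{p,q}$ for every space $Z_3$ of type $\ell_{p,q,\infty}$ (Proposition~\ref{lpqabsorb}); (b) $\ell_{p,q}$ is isomorphic to a complemented subspace of $U_{p,q}$, say $U_{p,q} \approx \ell_{p,q} \oplus V$ (Proposition~\ref{0c}); and (c) $\ell_{p,q} \approx \ell_{p,q} \oplus \ell_{p,q}$. Now the bookkeeping. If $Z_1 \approx U_{p,q}$, then using (b), (a) and (c) in turn, $Z_1 \oplus Z_2 \oplus Z_3 \approx \ell_{p,q} \oplus V \oplus Z_2 \oplus Z_3 \approx \ell_{p,q} \oplus V \approx U_{p,q}$, a space of type $\ell_{p,q,0}(I)$ --- here (a) absorbs $Z_3$ into the $\ell_{p,q}$-summand, and (c) absorbs $Z_2$ when $Z_2 = \ell_{p,q}$. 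If $Z_1 \not\approx U_{p,q}$ and $Z_2 = \ell_{p,q}$, then by (a) $Z_2 \oplus Z_3 \approx \ell_{p,q}$, so the space is isomorphic to $Z_1 \oplus \ell_{p,q}$, which is $\ell_{p,q}$ (a space of type $\ell_{p,q,1}$) when $Z_1 = \{0\}$ and is of type $\ell_{p,q,0}(F) \oplus \ell_{p,q,1}$ otherwise. Finally, if $Z_1 \not\approx U_{p,q}$ and $Z_2 = \{0\}$, then the space equals $Z_1 \oplus Z_3$, which is a space of type $\ell_{p,q,\infty}$ (when $Z_1 = \{0\}$), a space of type $\ell_{p,q,0}(F)$ (when $Z_3 = \{0\}$), or a space of type $\ell_{p,q,0}(F) \oplus \ell_{p,q,\infty}$ (when both $Z_1$ and $Z_3$ are non-trivial); the case $Z_1 = Z_3 = \{0\}$ is excluded since $\ell_{p,q}(\{A_n\}) \neq \{0\}$. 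In every case the space is one of the six listed.

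I do not expect a genuine obstacle here, since the statement is essentially a repackaging of Corollary~\ref{cor} together with Propositions~\ref{0c} and~\ref{lpqabsorb}. The one point that deserves a moment's attention is that in the argument above a summand of type $\ell_{p,q,0}(F)$ is never absorbed by any other summand --- this is precisely why the two ``hybrid'' spaces $\ell_{p,q,0}(F) \oplus \ell_{p,q,1}$ and $\ell_{p,q,0}(F) \oplus \ell_{p,q,\infty}$ cannot be simplified further and must be included in the list; and one should make sure that, in applying Corollary~\ref{cor} to cases (iv), (v), (vi), the summands $X_1,X_2,X_3$ are read as allowed to be $\{0\}$ when the corresponding type is absent (as it must be, e.g., for the type-$\ell_{p,q,\infty}$ summand in case (iv)).
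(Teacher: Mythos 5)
Your proposal is correct and follows essentially the same route as the paper, which derives the corollary directly from Corollary~\ref{cor} together with the absorption results of Propositions~\ref{0c} and~\ref{lpqabsorb} (plus Remark~\ref{remark} identifying type $\ell_{p,q,1}$ with $\ell_{p,q}$); your contribution is simply to write out the bookkeeping, including the observation that the summands in Corollary~\ref{cor} may be trivial, which the paper leaves implicit.
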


Recall that Corollary \ref{cor} shows that any
$\ell_{p,q}(\{A_n\})$ can be decomposed into the direct sum of spaces of type  $\ell_{p,q,0}(F)$ (or $\ell_{p,q,0}(I)$), of type $\ell_{p,q,1}$ and of type $\ell_{p,q,\infty}$.
In the following table, we collect all possible isomorphic types of $L_{p,q}$-spaces of  these fundamental types (i.e., $\ell_q$,  $\ell_{p,q,0}(F)$, $\ell_{p,q,0}(I)$,  $\ell_{p,q,1}$ and  $\ell_{p,q,\infty}$).
Below, `sometimes' means that there exist Banach  spaces $X_1$ and $X_2$  of type $A$ such that $X_1\approx Y$ and $X_2\not \approx Y$ for some space $Y$ of type $B$.

We supply below all the references to Table \ref{t1}.

The fact that $\ell_q \not\approx \ell_{p,q}$ is trivial and folklore (see e.g. Corollary \ref{4.28} and Proposition \ref{lpqabsorb}, see also \cite[Theorem  4.e.5]{LT1});

The assertions $\ell_{p,q,0}(I)\not\approx \ell_q$ and $\ell_{p,q,0}(I)\not \approx \ell_{p,q}$ follow   from Proposition \ref{4.8} and the fact that $\ell_q \hto  \ell_{p,q}$\cite{Dilworth};

For the relations between $\ell_q$ and $\ell_{p,q}(F)$ (resp. $\ell_{p,q,\infty}$), see Theorem \ref{4.19} (resp. Proposition \ref{infiq} and Corollary \ref{4.28});

The assertions  $\ell_{p,q}\not\hto \ell_{p,q,0}(F)$ and $\ell_{p,q,0}(I)\not\hto \ell_{p,q,0}(F)$ (resp. $\ell_{p,q,\infty}$) follow    from Proposition \ref{lpqnot} (resp. Proposition \ref{noti});

The relation between $\ell_{p,q,0}(F)$ and $\ell_{p,q,\infty}$ follows from Theorems \ref{4.19}, \ref{newsubspace},   Proposition \ref{infiq} and Theorem \ref{4.30}.
$\\$
\begin{center}
\begin{tabular}{|c| c|c|c|c|c|   } \hline
 \diagbox[width=6.2em]{$X$}{$Y$}
 & $\ell_q$& $\ell_{p,q}$&  $\ell_{p,q,0}(I )$ & $\ell_{p,q,0}(F)$ & $\ell_{p,q,\infty}$
   \\
  \hline
  $\ell_q$  & $-$ &  Never &  Never & Sometimes &Sometimes\\\hline
 $\ell_{p,q}$ &  Never  & $-$ &Never &  Never & Never
   \\\hline
$\ell_{p,q,0}(I )$ &  Never & Never &  $-$ & Never & Never
     \\\hline
  $\ell_{p,q,0}(F)$ & Sometimes & Never &  Never & $-$ & Sometimes
 \\\hline
 $\ell_{p,q,\infty}$  & Sometimes   & Never & Never & Sometimes  &$-$
  \\\hline
\end{tabular}
\captionof{table}{ When is $X$ isomorphic to  $Y$?}\label{t1}
\end{center}

\subsection{The type $\ell_{p,q,0}(I)$ case}

The following proposition is an immediate consequence of \cite[Lemma 8.7]{JMST} (see also \cite[Proposition 2.f.7]{LT2}).
It shows that all spaces of type $\ell_{p,q,0}(I)$ are isomorphic to each other.
\begin{proposition}\label{lpqinftyequ}For any $1<p<\infty$, $1\le   q<\infty$ and $p\ne q$.
Assume that   $X$ and $Y$ are  of type $\ell_{p,q,0}(I)$.
Then, we have
$$X\approx Y.$$%
\end{proposition}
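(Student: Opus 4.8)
The plan is to reduce Proposition~\ref{lpqinftyequ} directly to the cited result \cite[Lemma 8.7]{JMST} (equivalently \cite[Proposition 2.f.7]{LT2}), which asserts precisely that the subspace $U_Y$ of a symmetric function space $Y$ on $[0,\infty)$ spanned by the characteristic functions of a sequence of disjoint sets of finite measure is, up to isomorphism, independent of the choice of that sequence, provided the measures of the sets tend to $0$ and have infinite sum. So the only real content is to check that any space $X=\ell_{p,q}(\{A_n\})$ of type $\ell_{p,q,0}(I)$ genuinely \emph{is} (isomorphic to) such a span inside $Y=L_{p,q}(0,\infty)$, and that the hypothesis ``$\mu(A_n)\to 0$, $\sum\mu(A_n)=\infty$'' matches the hypothesis needed in \cite{JMST}. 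Once that identification is made, applying the cited lemma to both $X$ and $Y$ and composing the two isomorphisms with a reference realization $U_{p,q}$ gives $X\approx U_{p,q}\approx Y$.

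Here is how I would carry it out. First, recall from Section~\ref{prel} that the map sending $\chi_{A_n}$ to the indicator of an interval of length $\mu(A_n)$ realizes $\ell_{p,q}(\{A_n\})$ isometrically as the closed span of a disjoint sequence of indicator functions in $L_{p,q}(0,\infty)$; thus $X$ is isometric to $U_{L_{p,q}}$ built from the specific sequence of interval lengths $\{\mu(A_n)\}$. Second, observe that the type-$\ell_{p,q,0}(I)$ condition on $\{A_n\}$ — namely $\mu(A_n)\to 0$ and $\sum_n \mu(A_n)=\infty$ — is exactly the condition under which \cite[Lemma 8.7]{JMST} guarantees independence from the sequence (after passing to a subsequence if necessary, which does not change the span up to isomorphism since any subsequence of the characteristic functions spans a complemented isomorphic copy via an averaging projection, cf.\ the averaging-operator discussion at the end of Section~\ref{prel}). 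Third, fix one canonical such sequence, call its span $U_{p,q}$; then \cite[Lemma 8.7]{JMST} yields $X\approx U_{p,q}$. Running the identical argument for $Y$ gives $Y\approx U_{p,q}$, hence $X\approx Y$.

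The only mild subtlety — and the step I would be most careful about — is the passage between ``sequences of sets with measures tending to $0$ and summing to $\infty$'' and the precise hypotheses in \cite[Lemma 8.7]{JMST}: one must make sure that the version of the result being invoked does not require, say, monotonicity of $\mu(A_n)$ or a uniform bound, and that it applies in the quasi-normed range $1<p<q<\infty$ as well as the normed range $1\le q\le p$. Since $L_{p,q}$ is always isomorphic to a Banach space and the averaging operators are bounded projections on $L_{p,q}(\Omega)$ for all admissible $p,q$ (as recorded in Section~\ref{prel}), the hypotheses of the Johnson--Maurey--Schechtman--Tzafriri result are met, so this is a routine verification rather than an obstacle. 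I would therefore phrase the proof as: ``By the isometric identification of Section~\ref{prel}, both $X$ and $Y$ are of the form $U_{L_{p,q}}$ for sequences of disjoint sets whose measures tend to $0$ with infinite sum; by \cite[Lemma 8.7]{JMST} (see also \cite[Proposition 2.f.7]{LT2}) such a space does not depend, up to isomorphism, on the chosen sequence, whence $X\approx Y$.''
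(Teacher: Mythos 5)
Your proposal is correct and follows essentially the same route as the paper, which states the proposition as an immediate consequence of \cite[Lemma 8.7]{JMST} (see also \cite[Proposition 2.f.7]{LT2}); your reduction --- realizing each space of type $\ell_{p,q,0}(I)$ isometrically as the span of disjoint indicators in $L_{p,q}(0,\infty)$ with measures tending to $0$ and infinite sum, and then invoking the independence of $U_{L_{p,q}}$ from the chosen sequence --- is precisely the verification the paper leaves implicit.
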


We adopt the notation in \cite{JMST} and denote by $U_{p,q}$ a space of type $\ell_{p,q,0}(I)$.
The following proposition  shows that $U_{p,q}$
is the `largest' space among the all types of atomic $L_{p,q}$-spaces.
 \begin{proposition}\label{0c}For any $1<p<\infty$, $1\le   q<\infty$ and $p\ne q$.
 Let  a Banach space  $X$ be  of type $\ell_{p,q,1}$, or $\ell_{p,q,0}(F)$, or $\ell_{p,q,0}(I)$ or $\ell_{p,q,\infty}$.
We have
 $$U_{p,q}\oplus X\approx U_{p,q}. $$
 In particular,
  $U_{p,q}$ has a complemented subspace isometric  to $X$.
  \end{proposition}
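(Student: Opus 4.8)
The plan is to prove the stronger ``in particular'' assertion first, by choosing for each possible type of $X$ a convenient atomic model of $U_{p,q}$, and then to read off $U_{p,q}\oplus X\approx U_{p,q}$ from that model. Two facts from the preliminaries will be used throughout: every averaging operator $T_{\mathcal B}$ attached to a partition into atoms of finite measure is a \emph{contractive} projection on $L_{p,q}$; and, by \cite[Lemma~8.7]{JMST} (cf.\ Proposition~\ref{lpqinftyequ}), $\ell_{p,q}(\{B_n\})$ is isometric to $U_{p,q}$ whenever $\mu(B_n)\to 0$ and $\sum_n\mu(B_n)=\infty$. I also record two soft observations. (a) $U_{p,q}\approx U_{p,q}\oplus U_{p,q}$: a defining family of type $\ell_{p,q,0}(I)$ splits into two subfamilies each still of type $\ell_{p,q,0}(I)$ (the total mass being infinite, while $\mu(B_n)\to 0$ passes to subfamilies), and the two restriction maps $f\mapsto\chi_{E}f$ are norm-one projections adding up to the identity, so the space decomposes with constant $\le 2$ into two pieces, each $\approx U_{p,q}$ by Proposition~\ref{lpqinftyequ}. (b) For the same reason, for disjoint atomic families $\mathcal A,\mathcal B$ one has $\ell_{p,q}(\mathcal A\sqcup\mathcal B)\approx\ell_{p,q}(\mathcal A)\oplus\ell_{p,q}(\mathcal B)$, with $\ell_{p,q}(\mathcal B)$ sitting isometrically and $1$-complementedly inside; in particular $\ell_{p,q}\oplus\ell_{p,q}\approx\ell_{p,q}$. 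When $X$ itself is of type $\ell_{p,q,0}(I)$ we have $X\approx U_{p,q}$ (indeed isometrically), so $U_{p,q}\oplus X\approx U_{p,q}\oplus U_{p,q}\approx U_{p,q}$ and nothing more is needed.

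Let $X=\ell_{p,q}(\mathcal C)$ be of type $\ell_{p,q,0}(F)$, so the atoms of $\mathcal C$ have measures tending to $0$. Fix a family $\mathcal A_0$ of type $\ell_{p,q,0}(I)$. Then $\mathcal C\sqcup\mathcal A_0$ still has vanishing atoms (interleave the enumerations) and infinite total mass, hence is of type $\ell_{p,q,0}(I)$; so $\ell_{p,q}(\mathcal C\sqcup\mathcal A_0)$ is isometric to $U_{p,q}$, while by (b) it is also isomorphic to $X\oplus\ell_{p,q}(\mathcal A_0)\approx X\oplus U_{p,q}$ and contains an isometric $1$-complemented copy of $X$. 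Thus $U_{p,q}\approx X\oplus U_{p,q}$ and the ``in particular'' statement both hold.

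Next let $X=\ell_{p,q}(\{C_k\})$ be of type $\ell_{p,q,1}$ or $\ell_{p,q,\infty}$; in both cases $\sum_k\mu(C_k)=\infty$, but the atoms do not shrink, so one cannot place them inside a type-$\ell_{p,q,0}(I)$ family directly and instead subdivides. Choose integers $N_k\uparrow\infty$ with $\mu(C_k)/N_k\to 0$ and set $\mathcal A=\bigsqcup_k B_k$, where $B_k$ is a block of $N_k$ atoms each of measure $\mu(C_k)/N_k$, so $\mu(B_k)=\mu(C_k)$. Then $\mathcal A$ has vanishing atoms and $\sum_{A\in\mathcal A}\mu(A)=\sum_k\mu(C_k)=\infty$, so $\ell_{p,q}(\mathcal A)$ is isometric to $U_{p,q}$; and the block-averaging projection $T_{\{B_k\}}$ has norm one, with range (the functions constant on each $B_k$) isometric to $\ell_{p,q}(\{B_k\})=\ell_{p,q}(\{C_k\})=X$, since the rearrangement of $\sum_k a_k\chi_{B_k}$ equals that of $\sum_k a_k\chi_{C_k}$. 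This proves the ``in particular'' statement for these types and gives a decomposition $U_{p,q}\approx X\oplus Z$ with $Z=\ker T_{\{B_k\}}$. If $X$ is of type $\ell_{p,q,1}$, then $X\approx\ell_{p,q}$ by Remark~\ref{remark}, so $U_{p,q}\approx\ell_{p,q}\oplus Z$ and hence $U_{p,q}\oplus X\approx\ell_{p,q}\oplus\ell_{p,q}\oplus Z\approx\ell_{p,q}\oplus Z\approx U_{p,q}$, using only $\ell_{p,q}\oplus\ell_{p,q}\approx\ell_{p,q}$. (Taking all blocks of measure $1$ — e.g.\ $\mathcal A$ made of $2^j$ atoms of measure $2^{-j}$ for each $j$ — the same argument gives $\ell_{p,q}\xhookrightarrow{c}U_{p,q}$, hence $U_{p,q}\approx\ell_{p,q}\oplus Z'$ for some $Z'$.)

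The case $X$ of type $\ell_{p,q,\infty}$ is the main obstacle, since now $X\oplus X\not\approx X$ in general and the bookkeeping above breaks. Granting Proposition~\ref{lpqabsorb} (that $\ell_{p,q}$ absorbs every space of type $\ell_{p,q,\infty}$), one finishes at once: $U_{p,q}\oplus X\approx\ell_{p,q}\oplus Z'\oplus X\approx(\ell_{p,q}\oplus X)\oplus Z'\approx\ell_{p,q}\oplus Z'\approx U_{p,q}$. For a proof independent of that proposition, the matter reduces to showing that the kernel $Z=\ker T_{\{B_k\}}=\bigoplus_k V_k$ (with $V_k$ the mean-zero subspace of $\ell_{p,q}(B_k)$) is itself isomorphic to $U_{p,q}$: $Z$ is $1$-complemented in $U_{p,q}$, and conversely, pairing up the atoms inside each block and taking the disjointly supported mean-zero vectors $\chi_a-\chi_{a'}$, one obtains (by the rearrangement computation) a subspace of $Z$ isometric to $\ell_{p,q}$ of a family of type $\ell_{p,q,0}(I)$, i.e.\ to $U_{p,q}$, which is complemented in $Z$ because the relevant projection is the identity minus an averaging over the chosen pairs; the Pe{\l}czy\'{n}ski decomposition method~\cite[Theorem~2.2.3]{AK}, applied after arranging a self-similar splitting $Z\approx Z\oplus Z$ by a suitable choice of the $N_k$, then yields $Z\approx U_{p,q}$. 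This kernel identification is the technically delicate point; all the other steps are routine.
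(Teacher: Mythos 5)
Your proof is correct, but for half of the cases it takes a genuinely different and longer route than the paper. The paper's entire argument is one observation: writing $U_{p,q}=\ell_{p,q}(\{A_n\})$ and $X=\ell_{p,q}(\{B_n\})$ for \emph{any} of the four types, the merged family $\{C_n\}=\{A_n\}\cup\{B_n\}$ still satisfies the hypothesis of \cite[Lemma 8.7]{JMST}, namely $\sum_{\mu(C_n)\le\varepsilon}\mu(C_n)=\infty$ for every $\varepsilon>0$; this condition concerns only the small atoms and is completely insensitive to adjoining atoms whose measures stay bounded away from zero (type $\ell_{p,q,1}$) or tend to infinity (type $\ell_{p,q,\infty}$). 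Hence $U_{p,q}\oplus X\approx\ell_{p,q}(\{A_n\})\oplus\ell_{p,q}(\{B_n\})\approx\ell_{p,q}(\{C_n\})\approx U_{p,q}$ in all four cases at once, with $X$ sitting in $\ell_{p,q}(\{C_n\})$ isometrically and complementedly via the support projection. You use exactly this merging mechanism for the types $\ell_{p,q,0}(I)$ and $\ell_{p,q,0}(F)$, where the merged family has shrinking atoms, but — not noticing that the JMST condition tolerates large atoms — you treat $\ell_{p,q,1}$ and $\ell_{p,q,\infty}$ separately: you subdivide each atom $C_k$ into $N_k$ equal pieces, use the block-averaging projection to realize $X$ as a complemented (indeed isometric) copy inside a type-$\ell_{p,q,0}(I)$ space, and then absorb, via $\ell_{p,q}\oplus\ell_{p,q}\approx\ell_{p,q}$ for type $\ell_{p,q,1}$ and via Proposition \ref{lpqabsorb} for type $\ell_{p,q,\infty}$. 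This is valid and not circular (Proposition \ref{lpqabsorb} does not depend on Proposition \ref{0c}), and the subdivision-plus-averaging idea is a nice self-contained way to see the complemented embedding; what the paper's route buys is uniformity and brevity, dispensing with Remark \ref{remark}, the absorption results, and any case distinction. Your alternative sketch avoiding Proposition \ref{lpqabsorb} (identifying $\ker T_{\{B_k\}}$ with $U_{p,q}$, including arranging a self-similar splitting of the kernel) is, as you concede, only a sketch, but it is not needed for the proof to close. Two small inaccuracies to fix: the averaging projections need not be contractive on $L_{p,q}$ (for $p<q$ one only has the boundedness recorded in Section \ref{prel}, which is all you actually use), and the merged or subdivided families yield spaces \emph{isomorphic}, not isometric, to $U_{p,q}$, since \cite[Lemma 8.7]{JMST} (Proposition \ref{lpqinftyequ}) gives isomorphism only.
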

 \begin{proof}
 Let $U_{p,q}$ be an arbitrary space $\ell_{p,q}(\{A_n\})$ of type $\ell_{p,q,0}(I)$.
 It is obvious that for  $\ell_{p,q}(\{B_n\})$ of type $\ell_{p,q,0}(F)$ (or $\ell_{p,q,0}(I)$ or $\ell_{p,q,\infty}$),
    $\{C_n\}:=\{A_n\}\cup \{B_n \}$ satisfies that
    $$\sum_{ \mu(C_n)\le \varepsilon  }\mu(C_n)=\infty$$
 for any $\varepsilon>0$.
 Hence, by \cite[Lemma 8.7]{JMST},  $ U_{p,q}\oplus  \ell_{p,q}(\{B_n\})\approx  \ell_{p,q}(\{A_n\})\oplus \ell_{p,q}(\{B_n\})\approx \ell_{p,q}(\{C_n\})\approx U_{p,q} $ and therefore, $\ell_{p,q}(\{B_n\})$ is a complemented subspace of $U_{p,q}$.
 \end{proof}

The following proposition distinguishes  $\ell_{p,q}$ from $U_{p,q} $ and  strengthens the well-known result that $L_{p,q}(0,\infty)\not\hto \ell_{p,q}$ (see e.g. \cite{KS,SS2018}).

\begin{proposition}\label{4.8}
Let $1< p<\infty$, $1\le q <\infty $ and $p\ne q$.
Then
$$U_{p,q} \not\hookrightarrow \ell_{p,q}.$$
\end{proposition}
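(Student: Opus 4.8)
The plan is to argue by contradiction, assuming there is an isomorphic embedding $T\colon U_{p,q}\to\ell_{p,q}$. Realize $U_{p,q}$ as $\ell_{p,q}(\{A_n\})$ where $\{A_n\}$ is a sequence of atoms with $\mu(A_n)\to 0$ and $\sum_n\mu(A_n)=\infty$; by Remark~\ref{remark} we are free to choose any convenient such sequence, and it will be useful to group the atoms into consecutive blocks whose measures sum to, say, $1$ on each block, so that the normalized characteristic function of each block behaves, inside $U_{p,q}$, like a disjointly supported copy of a normalized step function in $L_{p,q}(0,1)$ with total mass $1$. Concretely, I would first exhibit inside $U_{p,q}$ a disjointly supported sequence of vectors $\{u_N\}$ which, for each fixed $m$, contains $m$ vectors whose span is (uniformly in $N$) isomorphic to $\ell_q^m$ with a large constant — this is the standard fact that a long string of small atoms of comparable measure spans something close to $\ell_q$ (Lemma~\ref{2.1}), combined with the fact that for disjointly supported vectors in $L_{p,q}$ one gets $\ell_q$-behaviour. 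The point is that $U_{p,q}$ contains uniformly complemented copies of $\ell_q^n$ that are simultaneously ``spread out'' at arbitrarily small scales; this is exactly what $\ell_{p,q}$ cannot accommodate.

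The heart of the argument is then a gliding-hump / subsequence-splitting analysis of the images $T u_N$ in $\ell_{p,q}$. I would apply the subsequence splitting lemma (referenced in the introduction, cf. \cite{S96, KS, SS2018}) to decompose, after passing to a subsequence, each normalized image into a ``large'' part living on a fixed finite set of coordinates plus disjointly supported tails plus a small remainder. The disjointly-supported tails in $\ell_{p,q}$, being a normalized block basis of the unit vector basis of $\ell_{p,q}$, are equivalent to the unit vector basis of $\ell_q$ when they are ``flat'' but — and this is the key quantitative obstruction — a normalized element of $\ell_{p,q}$ supported on $k$ coordinates has norm of a single coordinate bounded below by a power of $1/k$, so one cannot have, inside $\ell_{p,q}$, a family of normalized vectors that is simultaneously $\ell_q^m$-like and supported (in the relevant sense) on vanishingly thin pieces without the $p$ and $q$ indices interacting. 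I would extract from the assumed embedding a contradiction with the inequality relating the $\ell_{p,q}$ norm and the $\ell_q$ norm on step functions of a given height-profile — essentially, the embedding would force $\ell_{p,r}^n$ to embed uniformly into $\ell_q$ (or into $\ell_{p,q}$ at the wrong scale) in a way ruled out by Proposition~\ref{3.11} / Proposition~\ref{3.4} together with the fact that $p\ne q$.

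The main obstacle, and where I would spend the most care, is the bookkeeping in the splitting step: controlling simultaneously (a) the finitely-supported ``stable'' part of the $T u_N$, which after a further perturbation can be assumed to live on disjoint coordinate blocks, (b) showing that the $\ell_{p,q}$-norm contribution of these blocks must grow like the $\ell_{p,q}^n$-norm rather than the $\ell_q$-norm because the $u_N$ were chosen to carry genuine $\ell_{p,q}$ (not merely $\ell_q$) structure at the block level, and (c) ruling out that $T$ absorbs everything into a single bounded-dimensional coordinate subspace. Once the splitting is set up cleanly, the contradiction is quantitative and short: the lower estimate $\|T^{-1}\|^{-1}\|u_N\|\le\|Tu_N\|$ applied to suitable normalized linear combinations of the $u_N$ yields, on the one hand, an $\ell_{p,q}^n$-type lower bound (from $U_{p,q}$) and, on the other, only an $\ell_q$-type upper bound (from the disjointification in $\ell_{p,q}$), and these are incompatible as $n\to\infty$ precisely because $p\ne q$. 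I would therefore structure the write-up as: (1) choice of atoms and the ``spread $\ell_{p,q}^n$'' vectors in $U_{p,q}$; (2) subsequence splitting of their images; (3) the norm comparison giving the contradiction.
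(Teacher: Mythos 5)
Your overall skeleton — disjointify the images of the normalized small atoms by a gliding-hump argument, use ``flatness'' of the resulting normalized block images to get $\ell_q$-behaviour, and play this off against the block structure in the domain — is indeed the route the paper takes, but several of your concrete claims are wrong, and one of your two proposed endgames cannot work for all admissible $(p,q)$. First, the structural description of $U_{p,q}$ is off: a long string of small atoms of \emph{comparable} (e.g.\ equal) measure spans $\ell_{p,q}^m$ isometrically (Remark \ref{remark}), not something close to $\ell_q^m$; Lemma \ref{2.1} only applies when the decreasing rearrangements tend to $0$, i.e.\ when the measures of the atoms tend to $0$ \emph{along} the sequence. Moreover $\ell_q$ embeds complementably into $\ell_{p,q}$, so ``uniformly complemented copies of $\ell_q^n$'' is not what separates $U_{p,q}$ from $\ell_{p,q}$. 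The relevant domain structure (the one used in the paper) is a sequence of disjoint measure-one blocks, the $n$-th consisting of $n$ atoms of measure $1/n$: these blocks form an isometric copy of the $\ell_{p,q}$-basis, so $j$ of them have norm exactly $j^{1/p}$, while their constituent atoms are individually tiny and weakly null.

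Second, your fallback contradiction ``the embedding would force $\ell_{p,r}^n$ to embed uniformly into $\ell_q$, ruled out by Propositions \ref{3.11}/\ref{3.4}'' cannot close the argument: those propositions only cover $1<p<\min\{2,q\}$, $p>\max\{2,q\}$ and $q>2$, leaving the range $1\le q<p\le 2$ untouched, and there the statement you would need to contradict can even be true (Remark \ref{rem:s}: $\ell_{p,1}^n$ embeds uniformly into $\ell_1$ for $1\le p<2$). Third, your quantitative endgame is one-sided: an $\ell_{p,q}^n$-type lower bound in the domain against ``only an $\ell_q$-type upper bound'' for the images gives $n^{1/p}\lesssim n^{1/q}$, which is a contradiction only when $p<q$; for $p>q$ you need the \emph{lower} $\ell_q$-estimate for the images, and mere disjointness in $\ell_{p,q}$ does not even give the upper $\ell_q$-estimate when $q>p$. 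What makes the proof work for every $p\ne q$ — and what is missing from your plan — is the two-sided equivalence supplied by Lemma \ref{2.1}: the images of the individual normalized atoms are uniformly bounded, hence, after the gliding-hump disjointification, the block images divided by $b_n=\|\chi_{(0,1/n)}\|_{p,q}^{-1}=n^{1/p}\to\infty$ tend to $0$ in the supremum norm, so a subsequence is equivalent to the unit vector basis of $\ell_q$; comparing with the exact $j^{1/p}$ growth of the block sums in $U_{p,q}$ yields $j^{1/p}\sim j^{1/q}$, a contradiction in both regimes. Finally, the heavy subsequence-splitting machinery you invoke is unnecessary (and is deliberately avoided in the paper): weak nullity of the normalized atoms plus a standard small-perturbation argument already produces the essentially disjoint images.
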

\begin{proof}
By Proposition \ref{lpqinftyequ},
without loss of generality, we may assume that $\{A_n\}=\cup_n B_n$,
where
for each $n\ge 1$,     $B_n=\{A_k^n\}_{k=1}^\infty$ is    a set of infinitely  many atoms of measure $\frac1n$.

Assume that
there exists an isomorphism
$$T:X \hookrightarrow \ell_{p,q} .$$
Let  $ b_n:= \frac{1}{\norm{\chi_{(0,\frac1n) } }}_{p,q} $.
In particular, $b_n\to \infty$ as $n\to \infty$.

Fix a number $\varepsilon_1>0$.
When $n=1$, we define
an element $x_1^1\in \ell_{p,q}$ with finite-rank   support such that
$$\norm {x_1^1 -T(b_1 \chi_{A_1^1})}_{p,q}\le \varepsilon_1.$$
Let $n=2$.
Since $\left\{ b_n \chi_{A^n_k}  \right\}_{k=1}^\infty $ is weakly null, it follows that there exists a subset of   $\{\chi_{A^n_{k_j^{(n)}} }\}_{1\le j\le n }  $ of $\{\chi_{A^n_k}\} _k $ and   mutually disjoint elements   $x_{j}^n\in \ell_{p,q} $, which are also disjoint with $x_1^1$,
such that
$$\norm{ T(b_n \chi_{A_{k_j^{(n)} }^n}) - x_j^n    }_{p,q }\le  \frac{1}{2^{j+1}}\varepsilon_n
 .$$
Arguing inductively, for any given sequence $\{\varepsilon_n\}$, we construct a sequence
$$\left\{
T(b_1 \chi_{A_{1}^1}) , T(b_1 \chi_{A_{k_1^{(2)}}^2}), T(b_1 \chi_{A_{k_2^{(2)} }^2}),   T(b_1 \chi_{A_{k_1^{(3)}}^3}),
\cdots, T(b_1 \chi_{A_{k_1^{(n)}}^n}) ,\cdots T(b_1 \chi_{A_{k_n^{(n)}}^n}) , \cdots
\right\}$$
such that
\begin{align}\label{oneb}
\norm{ T(b_n \chi_{A_{k_{j}^{(n)} }^n}) - x_j^n   }_{p,q }\le  \frac{1}{2^{j+1}}\varepsilon_n
 ,
 \end{align}
where $x_j^n\in \ell_{p,q}$, $1\le j\le n$, $n\ge 1$, are mutually disjoint.
In particular,  for each $n$, we have
\begin{align}\label{sumoneb}\norm{\sum_{j=1}^n T(b_n \chi_{A^n_{k_j}})- \sum_{j=1}^n  x_{j}^n }_{p,q }\stackrel{\eqref{oneb}}{\le}  \varepsilon_n .
\end{align}


Note that
\begin{align}\label{estofchi}
\norm{\sum_{j=1}^n b _n \chi_{A_{k_j^{(n)}}^n}  } _{p,q } =b_n .
\end{align}
We obtain that
$$ \norm{T^{-1}}^{-1 } b_n  \le  \norm{T\left(\sum_{j=1}^n b _n \chi_{A_{k_j^{(n)} }^n} \right ) } _{p,q } \le \norm{T}b_n $$
and, by \eqref{sumoneb}, we have
$$  \norm{T^{-1}}^{-1 }  b_n  -  \varepsilon_n\le  \norm{\sum_{j =1}^n x_j^n  } _{p,q }
\le  \norm{T}b_n   +\varepsilon_n. $$
That is, $  \norm{T^{-1}} ^{-1 }   -  \varepsilon_n/b_n\le  \norm{\frac{\sum_{j =1}^n x_j^n}{b_n}  } _{p,q }
\le  \norm{T}   +\varepsilon_n/b_n . $
By \eqref{sumoneb},
one can choose $\{\varepsilon_n\}_n$ such that (see e.g. \cite[Theorem 1.3.9]{AK})
$$\left\{\sum_{j=1}^n T( \chi_{A^n_{k_j}}) \right \}_n \sim \left\{\frac{\sum_{j =1}^n     x_{j}^n}{b_n}\right \}_n   .$$

Note that $\{x_j^n\}_{n,j}$ are mutually disjoint and uniformly bounded.
Since $b_n\to _n \infty$, we  obtain that
$$\frac{\sum_{k =1}^n x_k^n}{b_n}\to 0\mbox{ in the uniform norm}$$
as $n\to \infty$.
By Lemma \ref{2.1}, there exists a subsequence $\{b_{n_j}\}_j$ of $\{b_n\}_n $ such that
$\left\{\frac{\sum_{k=1}^{n_j}  x_k^{n_j}}{b_{n_j}}\right\}_j  \sim \{e^{\ell_q}_{j}\}_j$.
%
We obtain that
$$j^{1/p}
\sim \norm{  T(\chi_{ (0,j)}) }_{p,q } \sim   \norm{ \frac{ \sum_{k=1}^{ 1}  x_{k}^1}{b_1} \oplus \frac{\sum_{k=1}^{n_2 }  x_{k}^{n_2}}{b_{n_2}} \cdots \oplus \frac{\sum_{k=1}^{n_j} x_k^{n_j}  }{b_{n_j} } }_{p,q } \sim
j ^{1/q } ,$$
which is impossible whenever  $p\ne q$.
\end{proof}

\subsection{The $\ell_{p,q,0}(F)$ case}
In this section,
without loss of generality, we always assume that  $\sum_{n=1}^\infty  \mu(A_n)=1$ and $\mu(A_n )\ge \mu(A_{n+1})$ for every $n$.
Let $b_n :=    \sum_{k=n}^\infty \mu(A_k )   $.
In particular, $\ell_{p,q}(\{A_n\})$ can be identified as the subspace of $L_{p,q}(0,1)$ generated by intervals $[b_{n+1},b_n)$.

The following is a direct consequence of results in \cite{KS,SS2018}.
\begin{proposition}\label{lpqnot}Let $1< q<\infty$, $1\le q<\infty$,  $p\ne q$.
Let $X$ be a space of type $\ell_{p,q,0}(F)$.
Then, $$\ell_{p,q} \not \hookrightarrow X .$$
In particular, $U_{p,q}\not \hookrightarrow X $.
\end{proposition}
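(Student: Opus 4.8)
The claim is that $\ell_{p,q}$ does not embed into any space $X$ of type $\ell_{p,q,0}(F)$, where $X = \ell_{p,q}(\{A_n\})$ with $\sum_n \mu(A_n) < \infty$. The plan is to reduce this to the already-known results of \cite{KS,SS2018}, namely that $\ell_{p,q} \not\hookrightarrow L_{p,q}(0,1)$ for $1<p<\infty$, $1\le q<\infty$, $p\ne q$ (and the stronger statement that $L_{p,q}(0,\infty)$ does not embed into $L_{p,q}(0,1)\oplus\ell_{p,q}$). The key observation, already recorded in the opening of this subsection, is that under the normalization $\sum_n \mu(A_n) = 1$ with $\mu(A_n)\ge \mu(A_{n+1})$, the space $\ell_{p,q}(\{A_n\})$ is isometrically identified with the subspace of $L_{p,q}(0,1)$ spanned by the characteristic functions of the intervals $[b_{n+1},b_n)$, where $b_n = \sum_{k\ge n}\mu(A_k)$. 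Hence $X$ is (isometric to) a subspace of $L_{p,q}(0,1)$.

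\textbf{Main step.} Since $X \hookrightarrow L_{p,q}(0,1)$, any isomorphic embedding $\ell_{p,q}\hookrightarrow X$ would compose to give an isomorphic embedding $\ell_{p,q}\hookrightarrow L_{p,q}(0,1)$. This directly contradicts \cite[Theorem 11]{KS} (equivalently \cite[Theorem 10]{SS2018}), which asserts exactly that $\ell_{p,q}$ does not embed isomorphically into $L_{p,q}(0,1)$ when $p\ne q$. Therefore $\ell_{p,q}\not\hookrightarrow X$. The final assertion, $U_{p,q}\not\hookrightarrow X$, is then immediate: by Theorem~\ref{Main}(ii) (Proposition~\ref{0c}), $\ell_{p,q}\xhookrightarrow{c} U_{p,q}$, so $U_{p,q}\hookrightarrow X$ would force $\ell_{p,q}\hookrightarrow X$, again a contradiction.

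\textbf{Where the work lies.} There is essentially no obstacle here beyond correctly invoking the identification of $\ell_{p,q}(\{A_n\})$ as a subspace of $L_{p,q}(0,1)$ — which is exactly the content of the paragraph preceding the proposition, following from the fact that rearranging a finitely-supported (in measure) disjoint sequence of characteristic functions onto adjacent intervals in $(0,1)$ preserves the Lorentz quasi-norm, as the quasi-norm depends only on the decreasing rearrangement. The only point requiring a word of care is that the normalization $\sum\mu(A_n)=1$ is harmless: if $\sum_n\mu(A_n)=c<\infty$ for some other finite constant $c$, then by Remark~\ref{remark} (scaling all atoms by $1/c$) one reduces to the normalized case without changing the isomorphism type. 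So the proof is a short reduction: normalize, identify $X$ with a subspace of $L_{p,q}(0,1)$, and quote \cite{KS,SS2018}.

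\begin{proof}
By Remark~\ref{remark} we may assume $\sum_{n=1}^\infty\mu(A_n)=1$ and $\mu(A_n)\ge\mu(A_{n+1})$ for all $n$. As noted above, with $b_n:=\sum_{k=n}^\infty\mu(A_k)$, the map $\chi_{A_n}\mapsto \chi_{[b_{n+1},b_n)}$ extends to an isometric embedding of $X=\ell_{p,q}(\{A_n\})$ into $L_{p,q}(0,1)$, since the Lorentz quasi-norm of a linear combination depends only on its decreasing rearrangement. If $\ell_{p,q}\hookrightarrow X$, then composing with this embedding yields $\ell_{p,q}\hookrightarrow L_{p,q}(0,1)$, contradicting \cite[Theorem 11]{KS} (see also \cite[Theorem 10]{SS2018}), since $p\ne q$. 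Hence $\ell_{p,q}\not\hookrightarrow X$. Finally, by Proposition~\ref{0c} we have $\ell_{p,q}\xhookrightarrow{c}U_{p,q}$, so $U_{p,q}\hookrightarrow X$ would imply $\ell_{p,q}\hookrightarrow X$, which is impossible; thus $U_{p,q}\not\hookrightarrow X$.
\end{proof}
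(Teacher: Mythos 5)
Your proof is correct and follows essentially the same route as the paper: the paper also observes that $X$, viewed as a space of type $\ell_{p,q,0}(F)$, embeds into $L_{p,q}(0,1)$, and derives the contradiction from $\ell_{p,q}\not\hookrightarrow L_{p,q}(0,1)$ as established in \cite{KS,SS2018}, with the final assertion on $U_{p,q}$ reduced to the same fact via $\ell_{p,q}\hookrightarrow U_{p,q}$ (Proposition~\ref{0c}). Your added remarks on the normalization and the isometric identification are fine but not a substantive departure.
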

\begin{proof}
Assume by contradiction that $  \ell_{p,q}\hookrightarrow X $.
Note that  $X\hookrightarrow L_{p,q}(0,1)$.
Hence, $\ell_{p,q}\hookrightarrow L_{p,q}(0,1)$, which is a contradiction to the fact that $\ell_{p,q}\not \hookrightarrow L_{p,q}(0,1)$ (see \cite[Corollary 8 and Theorem  10]{KS} and \cite[Theorem 9]{SS2018}).
\end{proof}

It is well-known that for any  sequence of  disjointly supported normalized elements   in $L_{p,q}(0,1 )$, there exists a subsequence equivalent to the unit vector basis of $\ell_q$\cite[Theorem 5]{Dilworth} (see also \cite{CD}).
The proposition below provides a quantitative version of this result, providing a criterion  for the sequence of  disjointly supported normalized characteristic   functions  in $L_{p,q}(0,1)$ to be equivalent to the unit vector basis of $\ell_q$.
The main tools used in the proof below are the  Hardy--Littlewood inequality \cite[Chapter II, Theorem 2.2]{Bennett_S} and its generalization \cite[Theorem 8.2]{Luxemburg} (see also \cite[Proposition 2.3]{DDP}).
Note that the following result holds true for $0<p\ne q<\infty$.
\begin{proposition}\label{Flq}Let $1<p<\infty$, $1\le q<\infty$, $p\ne q$.
Let $0<\cdots <b_{n+1} <b_n< \cdots <b_1=1$ and $\mu(A_n):=b_{n}-b_{n+1} \ge \mu(A_{n+1}):= b_{n+1}-b_{n+2}$ for all $n$.
Let $B_n = \norm{\chi_{(b_{n+1},b_n]}}_{p,q} = (b_n-b_{n+1})^{1/p}$. Then,
$\{B_n^{-1}\chi_{(b_{n+1},b_n]}\}$ in $L_{p,q}(0,1)$ is equivalent to $ \{e^{\ell_q}_n\}$ if   $\sup_n \frac{b_{n+1}}{b_n}<1$.
In particular,
$\ell_{p,q}(\{A_n\})\approx \ell_q$ if     $\sup _k  \frac{ \mu(A_{k+1 } )}{  \mu(A_k) }<1$.

On the other hand,
if $\lim_{n\to \infty} \frac{b_{n+1}}{b_n}=1 $,
then
$\{B_n^{-1}\chi_{(b_{n+1},b_n]}\}_{n=1}^\infty $ in $L_{p,q}(0,1)$ is not equivalent to $ \{e^{\ell_q}_n\}$.
\end{proposition}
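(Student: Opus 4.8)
The plan is to work directly in $L_{p,q}(0,1)$ using the subspace generated by the intervals $(b_{n+1},b_n]$, exploiting the fact that the relevant computations reduce to estimates on decreasing rearrangements of sums of disjointly supported step functions. Since the $\chi_{(b_{n+1},b_n]}$ are disjointly supported and $B_n = (b_n-b_{n+1})^{1/p}$, the normalized basic sequence $\{u_n\} := \{B_n^{-1}\chi_{(b_{n+1},b_n]}\}$ is $1$-unconditional in $L_{p,q}(0,1)$; so it suffices to estimate $\norm{\sum_n a_n u_n}_{p,q}$ for nonnegative scalars $a_n$ and compare with $(\sum a_n^q)^{1/q}$. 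The key observation is that the decreasing rearrangement of $\sum_n a_n u_n$ is a step function: if we index so that the values $a_n B_n^{-1}$ are arranged in whatever order, the rearrangement jumps at the partial sums of the measures $\mu(A_n)$. The norm is then $\left(\sum_n (a_n B_n^{-1})^q\bigl[(\sigma_n)^{q/p} - (\sigma_{n-1})^{q/p}\bigr]\right)^{1/q}$ where $\sigma_n$ are the appropriate partial sums of the $\mu(A_{\pi(k)})$ in the order dictated by the sizes of $a_k B_k^{-1}$.

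\textbf{The case $\sup_n b_{n+1}/b_n < 1$.} Here I would argue that the hypothesis forces a kind of geometric decay that makes the interval structure behave like a lacunary (hence $\ell_q$-like) sequence. Write $\delta := \sup_n b_{n+1}/b_n < 1$, so $b_n - b_{n+1} \ge (1-\delta) b_n$ and also $b_n \le \delta^{n-1}$, giving $B_n^p = b_n - b_{n+1} \in [(1-\delta)b_n, b_n]$ with $b_n$ itself comparable to $\delta^n$ up to the first few terms — more precisely, $b_n$ decreases at least geometrically but we also have the lower bound $b_n - b_{n+1} \le b_n$, so $B_n^p \le b_n$ and the $b_n$ form a lacunary-type sequence. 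The crucial point for equivalence to $\ell_q$: by the Hardy–Littlewood inequality applied to the step function $\sum a_n u_n$, I would show that
$$\norm{\sum_n a_n u_n}_{p,q}^q \sim \sum_n (a_n B_n^{-1})^q \bigl(b_{\pi}^{q/p}\text{-increments}\bigr) \sim \sum_n (a_n B_n^{-1})^q \cdot B_n^q = \sum_n a_n^q,$$
where the middle equivalence uses that, because the interval lengths $\mu(A_n) = b_n - b_{n+1} \ge (1-\delta) b_n$ are a fixed fraction of the tail $b_n$, each increment $\sigma_n^{q/p} - \sigma_{n-1}^{q/p}$ is comparable (with constants depending only on $p,q,\delta$) to $\mu(A_n)^{q/p} = B_n^q$, regardless of the reordering $\pi$. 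Here I would invoke the elementary inequality that for $0 < s \le t$ with $s \ge c\,t$ one has $t^{q/p} - (t-s)^{q/p} \sim_{p,q,c} s^{q/p}$, applied with $t = \sigma_n$ (a tail sum of the reordered measures, which is always $\le$ a constant multiple of $b_{m}$ for the smallest index $m$ appearing, hence comparable to $\mu(A_m) \ge \mu(A_n)$ after using monotonicity of $\mu(A_k)$); the monotonicity $\mu(A_n) \ge \mu(A_{n+1})$ and the lacunarity are exactly what make this bookkeeping work uniformly. This gives $\{u_n\} \sim \{e_n^{\ell_q}\}$, and the final clause $\ell_{p,q}(\{A_n\}) \approx \ell_q$ when $\sup_k \mu(A_{k+1})/\mu(A_k) < 1$ follows since that ratio condition implies $\sup b_{n+1}/b_n < 1$ (as $b_{n+1} = \sum_{k \ge n+1}\mu(A_k) \le \mu(A_{n+1})\sum_{j\ge 0}(\sup \text{ratio})^j$ is a fixed fraction of $b_n \ge \mu(A_n) \ge \mu(A_{n+1})$).

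\textbf{The case $b_{n+1}/b_n \to 1$.} Here I would show failure of equivalence by exhibiting scalars making the $L_{p,q}$-norm blow up relative to $\ell_q$. Take $a_n = 1$ for $n$ in a long block and $0$ elsewhere; but a cleaner choice: consider the "flat" vectors $x_N = \sum_{n=N}^{M(N)} u_n$ where $M(N)$ is chosen so that $b_{M(N)} \le b_N/2$. Since $b_{n+1}/b_n \to 1$, $M(N) - N \to \infty$, so $x_N$ involves $k_N := M(N) - N + 1 \to \infty$ summands; an $\ell_q$-equivalence would force $\norm{x_N}_{p,q} \sim k_N^{1/q}$. But I would estimate $\norm{x_N}_{p,q}$ directly: $x_N$ is supported on $(b_{M(N)+1}, b_N]$ with total measure $\ge b_N/2$, and on its support it is bounded below by $\min_n B_n^{-1} = b_N^{-1/p}$ roughly (since $B_n^{-1} = (b_n - b_{n+1})^{-1/p}$ and all these $b_n \in (b_N/2, b_N]$), so $\norm{x_N}_{p,q} \gtrsim \norm{b_N^{-1/p}\chi_{(b_{M(N)+1},b_N]}}_{p,q} \gtrsim b_N^{-1/p} \cdot (b_N/2)^{1/p} \sim 1$. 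So the $L_{p,q}$-norm of a sum of $k_N \to \infty$ disjoint normalized blocks stays bounded, whereas $\ell_q$ would demand growth like $k_N^{1/q}$; since $q < \infty$ this contradicts $\{u_n\} \sim \{e_n^{\ell_q}\}$. (One must double-check the lower bound on the $L_{p,q}$-quasinorm: this is where the Hardy–Littlewood inequality, or just the definition via the decreasing rearrangement together with the fact that $x_N^* \ge b_N^{-1/p}$ on an interval of length $\gtrsim b_N$, is applied.)

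\textbf{Main obstacle.} The delicate part is the first case: verifying that the increments $\sigma_n^{q/p} - \sigma_{n-1}^{q/p}$ are comparable to $\mu(A_n)^{q/p}$ \emph{uniformly over all reorderings} induced by arbitrary scalar choices $\{a_n\}$. This is where one genuinely needs the combination of (a) lacunarity $\sup b_{n+1}/b_n < 1$, (b) monotonicity $\mu(A_n) \downarrow$, and possibly (c) the Hardy–Littlewood inequality or its generalization cited in the excerpt to handle the rearranged step function cleanly rather than tracking permutations by hand. I expect the upper estimate $\norm{\sum a_n u_n}_{p,q} \lesssim (\sum a_n^q)^{1/q}$ to be the more subtle direction, since the lower estimate $\gtrsim (\sum a_n^q)^{1/q}$ for a $1$-unconditional normalized basis is comparatively standard (each single $u_n$ contributes at least its normalized norm). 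The converse direction of the proposition (the $b_{n+1}/b_n \to 1$ case) is easier once the right test vectors are chosen, so the bulk of the work is the first assertion.
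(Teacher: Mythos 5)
Your overall strategy (disjointness, $1$-unconditionality, two-sided comparison with $\norm{(a_n)}_q$) is the right one, but both of your key estimates have genuine gaps. In the lacunary case, the central claim --- that for \emph{every} reordering $\pi$ induced by the coefficients each increment $\sigma_n^{q/p}-\sigma_{n-1}^{q/p}$ is comparable to $\mu(A_{\pi(n)})^{q/p}=B_{\pi(n)}^q$ with constants depending only on $p,q,\delta$ --- is false. Take two intervals of a lacunary system with lengths $L\gg \ell$ and coefficients that place the long interval first in the rearrangement: the increment attached to the short interval is $(L+\ell)^{q/p}-L^{q/p}$, which is $\le \tfrac{q}{p}L^{q/p-1}\ell\ll \ell^{q/p}$ when $q<p$ and $\gg \ell^{q/p}$ when $q>p$; in either case the ratio degenerates as $\ell/L\to 0$. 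Your bookkeeping ``$\sigma_n\le C b_m$, with $m$ the smallest index appearing'' controls $\sigma_n$ by the \emph{largest} interval used so far, not by the $n$-th interval of the reordering, so the hypothesis $s\ge ct$ of your elementary inequality fails. (The two-sided estimate is nonetheless true; the paper avoids rearranging altogether: for $q\le p$ the weight $s^{q/p-1}$ is decreasing, so the Hardy--Littlewood inequality gives $\int_0^1 (x^*)^q\,ds^{q/p}\ge\int_0^1|x|^q\,ds^{q/p}$, and evaluating the right-hand side in the natural position produces the increments $b_n^{q/p}-b_{n+1}^{q/p}$, which \emph{are} uniformly comparable to $(b_n-b_{n+1})^{q/p}$ by lacunarity; the matching upper bound is the upper $q$-estimate for disjoint vectors. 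For $q>p$ the roles are reversed: Luxemburg's reverse inequality gives the upper bound and $q$-concavity the lower bound.) Relatedly, your remark that the lower estimate is ``comparatively standard'' because each $u_n$ contributes its norm only yields a lower bound by $\max_n|a_n|$, not by $\norm{(a_n)}_q$; which direction is easy depends on the sign of $p-q$.

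The converse argument is also incomplete. You prove only the lower bound $\norm{x_N}_{p,q}\gtrsim 1$ and then assert that $\norm{x_N}_{p,q}$ stays bounded; no upper bound is given, and boundedness is false in general: with $b_n=1/n$ (so $b_{n+1}/b_n\to1$) and the block $N\le n\le 2N$, all values $B_n^{-1}=(b_n-b_{n+1})^{-1/p}\sim N^{2/p}$ and the support has measure $\sim N^{-1}$, whence $\norm{x_N}_{p,q}\sim N^{1/p}\to\infty$. So no contradiction with $\norm{x_N}_{p,q}\sim k_N^{1/q}$ is reached as written; obtaining the needed matching estimate for coefficients identically $1$ runs into exactly the rearrangement bookkeeping you were trying to avoid. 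The paper's test vectors sidestep this: take $a_k=B_k$ for $k\ge n$, so the test function is the indicator of a tail interval and $\norm{x_n}_{p,q}^q=\sum_{k\ge n}B_k^q\,\frac{b_k^{q/p}-b_{k+1}^{q/p}}{(b_k-b_{k+1})^{q/p}}$, while the $\ell_q$-prediction is $\sum_{k\ge n}B_k^q$; since $b_{k+1}/b_k\to1$ forces $\frac{b_k^{q/p}-b_{k+1}^{q/p}}{(b_k-b_{k+1})^{q/p}}\to 0$ when $q<p$ and $\to\infty$ when $q>p$, the ratio of the two sides tends to $0$ or $\infty$ and the equivalence fails. (Your reduction of the ``in particular'' clause, $\sup_k\mu(A_{k+1})/\mu(A_k)<1\Rightarrow\sup_n b_{n+1}/b_n<1$, is fine.)
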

\begin{proof}
Let $x = \sum_{n\ge 1} a_n B_n^{-1}\chi_{(b_{n+1},b_n]}$ for an arbitrary  sequence $(a_n)$ with finitely many non-zero elements.

Assume first that $p\ge q\ge 1$.
By the Hardy--Littlewood inequality \cite[Chapter II, Theorem 2.2]{Bennett_S}, we obtain that
\begin{align}\label{c1}
\int_0^1x^*(s)^qds^{q/p} \geq \int_0^1 |x(s)|^qds^{q/p}=
\sum_{n\ge 1} |a_n|^q  B_n^{-q} (b_{n }^{q/p}- b_{n+1 }^{q/p}) .
\end{align}
We have
\begin{align*}
\norm{x}_{p,q}^q =\norm{|x|^q}_{p/q,1}   \stackrel{\eqref{c1}}{\ge}\sum_{n\ge 1}  |a_n|^q   B^{-q}_n  (b_{n }^{q/p}- b_{n +1 }^{q /p}) =
\sum_{n\ge 1}  |a_n|^q    \frac{b_{n }^{q/p}- b_{n +1 }^{q /p}}{(b_n-b_{n+1})^{q/p}}.
\end{align*}
Since $\sup_n \frac{b_n}{b_{n+1}}<1$, it follows that $\inf_n \frac{b_{n }^{q/p}- b_{n+1 }^{q /p}}{(b_n-b_{n+1})^{q/p}}  >0$.
We obtain that
$$\norm{x}_{p,q}   \ge \left(\inf_m \frac{b_{m }^{q/p}- b_{m+1 }^{q /p}}{(b_m-b_{m+1})^{q/p}}\right)^{1/q } \cdot \norm{(a_n)}_{q}.$$
On the other hand, by the $p$-convexity of $L_{p,q}$-spaces (see e.g. \cite[Eq.(17)]{Dilworth} for the case when $1\le q\le p<\infty$ and the case when  $q<  1$  follows from   \cite[Proposition 3.1.(6)]{DDS2014}), we have
$$\norm{x}_{p,q}   \le \left( \sum_{n\ge 1}  \norm{   a_n B_n^{-1}\chi_{(b_{n+1},b_n]}  }^q_{p,q} \right)^{1/q}=\norm{(a_n)}_q  .$$
We conclude that $\{B_n^{-1}\chi_{(b_{n+1},b_n]}\}_n \sim \{e^{\ell_q}_n\}_n $.

Assume that $\lim_n \frac{b_{n+1}}{b_n}=1$.
Let $x_n:=\sum_{k\ge n } B_k B_k^{-1} \chi_{(b_{k+1},b_k]}=\sum_{k\ge n }  \chi_{(b_{k+1},b_k]}$.
Note that
\begin{align*}
\norm{x_n^q}_{p/q,1} = \int_0^1 |x_n (s)|^q ds^{q/p}= \sum_{k\ge n } |B_k |^q    \frac{b_{k }^{q/p}- b_{k +1 }^{q /p}}{(b_k-b_{k+1})^{q/p}} \le  \sup _{k\ge n } \frac{b_{k }^{q/p}- b_{k +1 }^{q /p}}{(b_k-b_{k+1})^{q/p}}  \sum_{k\ge n } B_k   ^{q}.
\end{align*}
Since $\lim_{n\to\infty} \frac{b_{n+1}}{b_n}=1$, it follows that
$$\left(
\frac{\norm{x_n }_{p ,q} }{ \norm{(B_k )_{k\ge n }}_q   }
\right)^{ q }= \frac{\norm{x_n^q}_{p/q,1} }{ \norm{(B_k )_{k\ge n }}_q^q  } \le  \sup _{k\ge n } \frac{b_{k }^{q/p}- b_{k +1 }^{q /p}}{(b_k-b_{k+1})^{q/p}} \to 0,~\mbox{as $n\to\infty$}. $$
This implies that
$\{B_n^{-1}\chi_{(b_{n+1},b_n]}\}$ in $L_{p,q}(0,1)$ is not equivalent to $ \{e^{\ell_q}_n\}$ in $\ell_q $.

Now, we assume that $q>p >1 $.
By \cite[Theorem 8.2]{Luxemburg}, we obtain that
\begin{align*}
\int_0^1x^*(s)^q ds^{q/p} &\le  \int_0^1 |x(s)|^q ds^{q/p}=
\sum_{n\ge 1} |a_n|^q  B_n^{-q} (b_{n }^{p/q}- b_{n+1 }^{q/p })\\
&= \sum_{n\ge 1} |a_n|^q    \frac{b_{n }^{q/p}- b_{n +1 }^{q /p}}{(b_n-b_{n+1})^{q/p}}= \norm{(a_n)}_q^q  \sup_{n}   \frac{b_{n }^{q/p}- b_{n +1 }^{q /p}}{(b_n-b_{n+1})^{q/p}}.
\end{align*}
Since $\sup_n \frac{b_n}{b_{n+1}}<1$ and $\frac{q}{p} >1$, it follows that $\inf_n \frac{b_{n }^{q/p}- b_{n+1 }^{q /p}}{(b_n-b_{n+1})^{q/p}}  <\infty  $.
This shows that $\norm{x}_{p,q} \gtrsim \norm{(a_n)}_q$.
On the other hand, by the $q$-concavity (see e.g. \cite[Theorem 3]{Dilworth} for the case when $1< p<q  <\infty$ and the case when $p\le 1$   follows from   \cite[Proposition 3.1.(6)]{DDS2014}), we have
$$
\norm{x}_{p,q}
\ge
\left( \sum_{i=1}^\infty \norm{a_n B_n^{-1}\chi_{(b_{n+1},b_n]}}_{p,q}^q \right)^{1/q}
=
\left( \sum_{i=1}^\infty   a_n ^q \right)^{1/q}=
\norm{(a_n)}_q .
 $$

Assume that $\lim_n \frac{b_{n+1}}{b_n}=1$.
Let $x_n:=\sum_{k\ge n } B_k B_k^{-1} \chi_{(b_{k+1},b_k]}=\sum_{k\ge n }  \chi_{(b_{k+1},b_k]}$.
Note that
\begin{align*}
\norm{x_n^q}_{p/q,1} = \int_0^1 |x_n (s)|^q ds^{q/p}
= \sum_{k\ge n } |B_k |^q    \frac{b_{k }^{q/p}- b_{k +1 }^{q /p}}{(b_k-b_{k+1})^{q/p}} \ge  \inf _{k\ge n } \frac{b_{k }^{q/p}- b_{k +1 }^{q /p}}{(b_k-b_{k+1})^{q/p}}  \sum_{k\ge n } B_k   ^{q}.
\end{align*}
Since $\lim_{n\to\infty} \frac{b_{n+1}}{b_n}=1$, it follows that
$$\left(
\frac{\norm{x_n }_{p ,q} }{ \norm{(B_k )_{k\ge n }}_q   }
\right)^{ q }= \frac{\norm{x_n^q}_{p/q,1} }{ \norm{(B_k )_{k\ge n }}_q^q  }  \le  \inf  _{k\ge n } \frac{b_{k }^{q/p}- b_{k +1 }^{q /p}}{(b_k-b_{k+1})^{q/p}} \to \infty ,~\mbox{as $n\to\infty$}. $$
This implies that
$\{B_n^{-1}\chi_{(b_{n+1},b_n]}\}$ in $L_{p,q}(0,1)$ is not equivalent to $ \{e^{\ell_q}_n\}$ in $\ell_q$.
\end{proof}

\begin{example}Theorem \ref{Flq} above shows that if a sequence of atoms $\{A_n\}$ is such that $\mu(A_n)=\frac{1}{2^n}$, $n\ge 1$, then the normalized basis of $\{\chi_{A_n}\}$ is equivalent to the unit vector basis of $\ell_q$.
\end{example}



Proposition \ref{Flq} gives a criterion for   the basis generated by $\chi_{A_n}$ to be  equivalent to the natural basis of $\ell_q$.
In Theorem \ref{4.19} below, we show that there exists $X$ of type $\ell_{p,q,0}(F)$, which is not isomorphic to $\ell_q$.

Recall the following important result due to Lindenstrauss and Zippin (see \cite[Theorem 2.b.9]{LT1} or \cite{LZ}).
\begin{lemma}\label{LZ}
$\ell_1$ has a unique unconditional basis up to equivalence.
\end{lemma}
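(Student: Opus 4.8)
This is the classical Lindenstrauss--Zippin theorem, so in the paper it suffices to quote \cite{LZ} or \cite[Theorem~2.b.9]{LT1}; nonetheless, here is the line of argument I would follow if a proof were required. Fix a normalized unconditional basis $(u_n)$ of $\ell_1$ with unconditional constant $K$, and let $(e_n)$ denote the canonical basis of $\ell_1$. The plan is to show $(u_n)\sim(e_n)$. One of the two estimates is immediate: by the triangle inequality $\|\sum_n a_nu_n\|_{\ell_1}\le\sum_n|a_n|\,\|u_n\|=\sum_n|a_n|$, so $(e_n)$ dominates $(u_n)$, and the whole problem reduces to finding a constant $c>0$ with
\[
\Bigl\|\sum_n a_nu_n\Bigr\|_{\ell_1}\ \ge\ c\sum_n|a_n|
\]
for every finitely supported scalar sequence $(a_n)$.

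The first observation I would isolate is a \emph{subsequential} lower estimate, which follows from soft properties of $\ell_1$. Since $\ell_1$ has the Schur property, no normalized sequence in $\ell_1$ is weakly null; in particular, given any subsequence $(u_{n_k})$ there are $f\in\ell_\infty=\ell_1^{*}$ and a further subsequence with $|f(u_{n_{k_j}})|\ge\delta>0$ for all $j$, and then unconditionality gives
\[
\Bigl\|\sum_j b_ju_{n_{k_j}}\Bigr\|_{\ell_1}\ \ge\ \frac1K\Bigl\|\sum_j|b_j|\,u_{n_{k_j}}\Bigr\|_{\ell_1}\ \ge\ \frac{\delta}{K\|f\|}\sum_j|b_j| .
\]
Thus every subsequence of $(u_n)$---and, by the same reasoning, every normalized unconditional block basis of $(u_n)$---has a further subsequence equivalent to $(e_n)$. (Equivalently one may invoke James's theorem: $\ell_1$ contains no isomorphic copy of $c_0$ and no infinite-dimensional reflexive subspace, so any such block basis must contain an $\ell_1$-subsequence.)

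The remaining---and genuinely hard---point is to promote this to a \emph{uniform} lower estimate for all of $(u_n)$. Here one would exploit that $(u_n)$ spans all of $\ell_1$, so that each $[u_1,\dots,u_N]$ sits, uniformly in $N$, inside a subspace of $\ell_1$ that is well isomorphic to some $\ell_1^{m}$ (the local $\mathcal{L}_1$-structure of $\ell_1$). Combining this local $\ell_1$-structure with the $K$-unconditionality of $(u_n)$---via a gliding-hump/perturbation argument, together with the cotype $2$ estimate $\|\sum_n a_nu_n\|\gtrsim(\sum_n|a_n|^2)^{1/2}$, which forbids coefficient vectors that are too ``spread out''---one shows that a failure of the uniform lower estimate would produce a normalized block basis of $(u_n)$ contradicting the subsequential estimate of the previous paragraph. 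This uniformity step is the substance of \cite{LZ}, and I would refer there (or to \cite[Theorem~2.b.9]{LT1}) for the full details. The main obstacle is precisely this passage from a subsequential to a uniform $\ell_1$-estimate: the Schur property alone gives the former cheaply, but the latter genuinely uses that $(u_n)$ is a basis of $\ell_1$ and not merely a basic sequence in it.
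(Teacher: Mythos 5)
The paper offers no proof of this lemma---it is quoted verbatim from Lindenstrauss--Zippin (\cite{LZ}, or \cite[Theorem 2.b.9]{LT1})---and your primary recommendation, namely to cite those references, is exactly what the paper does. For what it is worth, your sketch of the hard uniform lower estimate is not quite how the cited proof actually runs: the standard argument uses cotype $2$ plus unconditionality to see that the coefficient map from $\ell_1$ into $\ell_2$ is bounded and then invokes Grothendieck's theorem (every bounded operator from $\ell_1$ to $\ell_2$ is absolutely summing) to recover the $\ell_1$ lower bound, rather than a gliding-hump reduction to the subsequential Schur-property estimate (which, as stated, would not by itself yield the desired contradiction); but since you explicitly defer that step to the reference, nothing is lost.
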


 Lemma \ref{LZ} and  Proposition \ref{Flq} immediately imply that there exist Banach spaces of type $\ell_{p,1,0}(F)$ which are  not isomorphic to $\ell_1$.
 We prove below  a slightly stronger result.

\begin{corollary}\label{3.10}Let $p>1 $.
If $\limsup_{a\to 0}{\rm Card}( \{A_n :  \mu(A_n) =a \} ) =\infty$, then
$\ell_{p,1}(\{A_n \})\not\xhookrightarrow{c} \ell_1$.
\end{corollary}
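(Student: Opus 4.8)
The plan is to argue by contradiction: suppose $\ell_{p,1}(\{A_n\}) \xhookrightarrow{c} \ell_1$. First I would extract from the hypothesis $\limsup_{a\to 0}{\rm Card}(\{A_n : \mu(A_n)=a\})=\infty$ a sequence of positive reals $a_j \downarrow 0$ and disjoint finite blocks $\mathcal{F}_j \subset \{A_n\}$ with $|\mathcal{F}_j|\to\infty$ and $\mu(A)=a_j$ for all $A\in\mathcal{F}_j$. The block of characteristic functions over $\mathcal{F}_j$ is, after normalization, a copy of the unit vector basis of $\ell_{p,1}^{|\mathcal{F}_j|}$ sitting isometrically inside $\ell_{p,1}(\{A_n\})$ — this uses only the rearrangement-invariance of the norm and the fact that atoms of equal measure in a given block contribute in a scale-invariant way (cf. Remark \ref{remark}). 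Hence $\ell_{p,1}^n$ embeds uniformly into $\ell_{p,1}(\{A_n\})$, and if the latter were a complemented subspace of $\ell_1$, then $\ell_{p,1}^n$ would embed uniformly into $\ell_1$ as well.

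The key point is then to invoke Lemma \ref{LZ} (Lindenstrauss--Zippin): $\ell_1$ has a unique unconditional basis up to equivalence. A complemented subspace of $\ell_1$ with an unconditional basis is therefore — by the standard consequence of uniqueness, together with the fact that complemented subspaces of $\ell_1$ with unconditional bases are themselves $\ell_1$ (this is precisely the content attributed to \cite{LZ} in \cite[Theorem 2.b.9]{LT1}) — isomorphic to $\ell_1$. But $\ell_{p,1}(\{A_n\})$ carries the unconditional basis $\{\chi_{A_n}\}$ (Remark \ref{remark}), so if it were complemented in $\ell_1$ it would be isomorphic to $\ell_1$, forcing its basis $\{\chi_{A_n}\}$ to be equivalent to the $\ell_1$-basis again by uniqueness. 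This contradicts Proposition \ref{Flq}: picking a subsequence $\{A_{n_k}\}$ with $\mu(A_{n_k})$ comparable to $a_k$ arranged so that the partial tail sums $b_{n+1}/b_n \to 1$ (possible since within each block $\mathcal F_j$ the ratios tend to $1$ as $|\mathcal F_j|\to\infty$), the normalized characteristic functions are not equivalent to $\{e^{\ell_1}_n\}$, hence not to the $\ell_1$-basis.

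I expect the main obstacle to be the clean passage from "complemented in $\ell_1$ with an unconditional basis" to "isomorphic to $\ell_1$": one must be careful that $\ell_{p,1}(\{A_n\})$ is infinite-dimensional and that the unconditional basis $\{\chi_{A_n}\}$ is genuinely a (normalized) basis for it, so that the uniqueness theorem applies verbatim; this is exactly where Lemma \ref{LZ} and Remark \ref{remark} combine. A secondary technical point is the construction of the subsequence realizing $\limsup b_{n+1}/b_n = 1$ on the nose — but since the blocks $\mathcal F_j$ have sizes tending to infinity while $\mu$ is fixed on each block, consecutive tail sums within a long block are automatically close, and a routine diagonal selection across the blocks $\{\mathcal F_j\}$ produces the required sequence. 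Once both ingredients are in place, Proposition \ref{Flq} delivers the contradiction and the proof is complete.
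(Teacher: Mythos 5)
Your skeleton coincides with the paper's up to the last step: assuming $\ell_{p,1}(\{A_n\})\xhookrightarrow{c}\ell_1$, you conclude $\ell_{p,1}(\{A_n\})\approx\ell_1$ (the fact actually needed here is that every infinite-dimensional complemented subspace of $\ell_1$ is isomorphic to $\ell_1$, i.e.\ that $\ell_1$ is prime, which is what the paper cites; Lemma \ref{LZ} alone does not give this), and then by uniqueness of the unconditional basis of $\ell_1$ you get $\{\chi_{A_n}/\norm{\chi_{A_n}}_{p,1}\}_n\sim\{e_n^{\ell_1}\}_n$. The genuine gap is in how you extract the contradiction. To use the second clause of Proposition \ref{Flq} you must produce an infinite subcollection of atoms whose tail sums satisfy $\lim_n b_{n+1}/b_n=1$, and your ``routine diagonal selection'' does not exist in general. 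Inside a block of $m$ atoms of common measure $a$ the consecutive tail ratios are $1-\frac{a}{T+ia}$ ($T$ the selected measure of all later atoms), which are near $1$ only for large $i$; at the bottom of the block the ratio is $\frac{T}{T+a}$, and nothing in the hypothesis lets you arrange $T\gg a$. Concretely, let the atoms consist exactly of blocks $B_k$ of $k$ atoms of measure $2^{-2^k}$: the hypothesis of the corollary holds, yet the total measure of everything after block $k$ is at most $2(k+2)2^{-2^{k+1}}\ll 2^{-2^k}$, so for \emph{every} infinite selection the block-boundary ratios tend to $0$ and no subsequence has $\lim_n b_{n+1}/b_n=1$. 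Since Proposition \ref{Flq} requires the limit condition on all ratios, it cannot be invoked here.

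The finish can be done without any subsequence construction, and this is how the paper argues: test the equivalence $\{\chi_{A_n}/\norm{\chi_{A_n}}_{p,1}\}\sim\{e_n^{\ell_1}\}$ directly on the block sums. Since the atoms of $B_k$ have equal measure,
$$\norm{\sum_{A_n\in B_k}\frac{\chi_{A_n}}{\norm{\chi_{A_n}}_{p,1}}}_{p,1}=|B_k|^{1/p},$$
whereas equivalence to the $\ell_1$ basis would force this to be comparable to $\norm{\sum_{i=1}^{|B_k|}e_i^{\ell_1}}_1=|B_k|$; as $p>1$ and $|B_k|\to\infty$ this is a contradiction. (Your opening observation that $\ell_{p,1}^n$ would then embed uniformly into $\ell_1$ is not itself a contradiction: by Remark \ref{rem:s} this genuinely happens for $1<p<2$, so that remark is a red herring rather than an error.)
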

\begin{proof}
If  $\limsup_{a\to 0}{\rm Card}( \{A_n :  \mu(A_n) =a \} ) =\infty$,
then there exists a decreasing sequence $\{a_k\}$ such that
${\rm Card}( \{A_n :  \mu(A_n) =a_k \} ) \to_k \infty $.
We denote by $B_k:=\{A_n :  \mu(A_n) =a_k  \} $, $k=1,2,\cdots$.
Assume that $\ell_{p,1}(\{A_n \})\xhookrightarrow{c} \ell_1$. Since $\ell_{p,1}(\{A_n\})$ is infinite-dimensional, it follows from   the fact that $\ell_1$ is a prime space \cite[p.57]{LT1}  that $\ell_{p,1}(\{A_n \})\approx  \ell_1$.
By the uniqueness of the unconditional basis of $\ell_1$, we obtain that
$\left\{ \frac{\chi_{A_n}}{\norm{A_n}_{p,1}}  \right\}_n \sim \left\{e^{\ell_1}_n\right \}_n$.
However,
$$ |B_k|=  \norm{  \sum_{i=1}^{|B_k|} e_i^{\ell_1}  }_1 \sim \norm{ \sum_{A_n\in B_k} \frac{\chi_{A_n}}{\norm{A_n}_{p,1}}  }_{p,1} = |B_k|^{1/p} , $$
which is impossible when $|B_k|\to \infty$.
\end{proof}


Under the same condition in Corollary \ref{3.10}, we ascertain  the lack of isomorphic embedding $\ell_{p,q}(\{A_n\})$ into $ \ell_{q}  $ when $1< p<\min\{2,q\}$ or $p>\max\{2,q\}$ or $p\ne q>2 $.
\begin{proposition}\label{4.14}
Let $\{A_n\}$ be such that
$\limsup_{a\to 0}{\rm Card}( \{A_n :  \mu(A_n) =a \} ) =\infty$.
If $1< p<\min\{2,q\}$ or $p>\max\{2,q\}$ or $p\ne q>2 $, then $$\ell_{p,q}(\{A_n\})\not\hookrightarrow \ell_{q} .$$
\end{proposition}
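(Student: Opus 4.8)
The plan is to exploit the hypothesis to embed $\ell_{p,q}^n$ \emph{isometrically} into $\ell_{p,q}(\{A_n\})$ for every $n$, and then to invoke Corollary~\ref{concl}(2), which states precisely that $\ell_{p,q}^n$ fails to embed uniformly into $\ell_q$ in each of the three ranges of $p,q$ under consideration. First I would extract from the condition $\limsup_{a\to 0}{\rm Card}(\{A_n : \mu(A_n)=a\})=\infty$ a sequence $\{a_k\}$ of values of the measure together with the blocks $B_k=\{A_n : \mu(A_n)=a_k\}$, chosen so that $|B_k|\to\infty$ as $k\to\infty$; the fact that one may also take $a_k\to 0$ plays no role here, only the sizes of the blocks matter.

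Next I would record the elementary observation that, for each fixed $k$, the (finite-dimensional, hence closed) linear span in $\ell_{p,q}(\{A_n\})$ of $\{\chi_A : A\in B_k\}$ is isometric to $\ell_{p,q}^m$, where $m=|B_k|$. Indeed, a function $x=\sum_{j=1}^m c_j\chi_{A_j}$ supported on atoms $A_j\in B_k$, all of measure $a_k$, has decreasing rearrangement equal to the step function taking the value $c^*_{(j)}$ — the $j$-th largest among $|c_1|,\dots,|c_m|$ — on the interval $[(j-1)a_k, ja_k)$, so that
\[
\|x\|_{p,q}^q=\sum_{j=1}^m \bigl(c^*_{(j)}\bigr)^q\Bigl((ja_k)^{q/p}-((j-1)a_k)^{q/p}\Bigr)=a_k^{q/p}\,\bigl\|(c_j)_{j=1}^m\bigr\|_{\ell_{p,q}^m}^q .
\]
Consequently the map $e_j\mapsto a_k^{-1/p}\chi_{A_j}$ is an isometry of $\ell_{p,q}^{m}$ onto this span, and since $|B_k|\to\infty$ we conclude that $\ell_{p,q}^n$ embeds isometrically into $\ell_{p,q}(\{A_n\})$ for every $n\ge 1$.

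Finally, suppose towards a contradiction that there is an isomorphic embedding $T:\ell_{p,q}(\{A_n\})\to\ell_q$. Composing $T$ with the isometric embeddings $\iota_n:\ell_{p,q}^n\hookrightarrow\ell_{p,q}(\{A_n\})$ constructed above yields operators $T\iota_n:\ell_{p,q}^n\to\ell_q$ satisfying $\|T\iota_n\|\le\|T\|$ and $\|(T\iota_n)^{-1}\|\le\|T^{-1}\|$ on their ranges, whence $\sup_n\|T\iota_n\|\,\|(T\iota_n)^{-1}\|\le\|T\|\,\|T^{-1}\|<\infty$; that is, $\ell_{p,q}^n$ embeds uniformly into $\ell_q$. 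This contradicts Corollary~\ref{concl}(2) in each of the ranges $1<p<\min\{2,q\}$, $p>\max\{2,q\}$, and $p\ne q>2$, proving the proposition. I do not expect a genuine obstacle: the entire difficulty has been absorbed into Section~\ref{fr}, and given Corollary~\ref{concl} the remaining argument reduces to the routine block identification above.
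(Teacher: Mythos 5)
Your proposal is correct and follows essentially the same route as the paper: extract blocks of equal-measure atoms with cardinalities tending to infinity, observe that their span in $\ell_{p,q}(\{A_n\})$ is isometric to $\ell_{p,q}^{n_k}$ (you just spell out the dilation computation the paper leaves implicit), and conclude via Corollary~\ref{concl}(2) that a uniform embedding of $\ell_{p,q}^n$ into $\ell_q$ is impossible.
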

\begin{proof}
The condition $\limsup_{a\to 0}{\rm Card}( \{A_n :  \mu(A_n) =a \} ) =\infty$ implies that
there exists a sequence  of subsets $\{B^k_n\}_{1\le n\le n_k}$ of $\{A_n\}$
such that $\mu(B^k_n)= a_k$ and $a_k\to 0$ as $k\to \infty$,
where $n_k :={\rm Card}( \{A_n :  \mu(A_n) =a_k  \} )  $.

Noting  that
$\ell_{p,q}(\{B_n^k\}_{1\le n\le n_k})$ is isometric to $\ell_{p,q}^{n_k}$, it follows that
$ \ell_{p,q}(\{A_n\})$
contains $\ell_{p,q}^{n_k} $ uniformly.
By Corollary  \ref{concl},
$ \ell_{p,q}(\{A_n\})\not\hookrightarrow  \ell_q$.
\end{proof}



\begin{cor}\label{4.16}For any $1<p<\infty$, $1\le   q<\infty$ and $p\ne q$,
if $\limsup_{a\to 0}{\rm Card}( \{A_n :  \mu(A_n) =a \} ) =\infty$,
then
$\ell_{p,q}(\{A_n\}) \not  \xhookrightarrow{c} \ell_q $.
\end{cor}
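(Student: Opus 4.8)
The plan is to split into two cases according to whether $\ell_{p,q}(\{A_n\})$, regarded as a subspace of $L_{p,q}(0,1)$, contains a ``small-measure tail'' that is itself isomorphic to $\ell_q$, or not. First I would recall that Corollary \ref{4.16} is an immediate strengthening of Proposition \ref{4.14}: a complemented embedding is in particular an embedding, so under the hypothesis $\limsup_{a\to 0}{\rm Card}(\{A_n:\mu(A_n)=a\})=\infty$ the space $\ell_{p,q}(\{A_n\})$ cannot even embed into $\ell_q$ whenever $1<p<\min\{2,q\}$ or $p>\max\{2,q\}$ or $p\ne q>2$. So the only remaining range to treat is $q\le p<2$ (together with $p\ne q$), i.e. $1\le q<p<2$, since the excluded ranges $1<p<\min\{2,q\}$, $p>\max\{2,q\}$, $q>2$ with $p\ne q$ are exactly the complement of $\{1\le q<p<2\}$ inside $\{1<p<\infty,\ 1\le q<\infty,\ p\ne q\}$ (up to $p=q$, which is excluded by hypothesis).

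For the case $1\le q<p<2$ I would argue by contradiction: suppose $\ell_{p,q}(\{A_n\})\xhookrightarrow{c}\ell_q$. The hypothesis gives a sequence of finite subsets $\{B^k_n\}_{1\le n\le n_k}\subset\{A_n\}$ with $\mu(B^k_n)=a_k\to 0$ and $n_k\to\infty$, so that $\ell_{p,q}(\{B^k_n\}_{1\le n\le n_k})$ is isometric to $\ell_{p,q}^{n_k}$; moreover by the averaging-operator discussion in Section \ref{prel} (the conditional expectation onto the $\sigma$-algebra generated by these atoms, restricted appropriately) each such $\ell_{p,q}^{n_k}$ sits \emph{complementedly} in $\ell_{p,q}(\{A_n\})$ with projection constant bounded independently of $k$. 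Composing with the assumed complemented embedding into $\ell_q$ would then give that $\ell_{p,q}^{n_k}$ embeds into $\ell_q$ uniformly, which is precisely what Corollary \ref{concl}(2) forbids for $1<p<\min\{2,q\}$ or $p>\max\{2,q\}$ or $p\ne q>2$ — but NOT in the range $1\le q<p<2$, where $\ell_q$ does contain $\ell_{p,q}^n$'s uniformly. Hence the naive argument stalls exactly here, and this is the main obstacle.

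The resolution I would pursue for $1\le q<p<2$ is to use the structure of a \emph{complemented} copy rather than just uniform finite representability. If $\ell_{p,q}(\{A_n\})\xhookrightarrow{c}\ell_q$, then $\ell_{p,q}(\{A_n\})$ is isomorphic to a complemented subspace of $\ell_q$; since it has an unconditional basis (the normalized $\chi_{A_n}$'s) and is not isomorphic to $\ell_q$ itself when it is of type $\ell_{p,q,0}(F)$ with $\limsup$-many equal small atoms (by Proposition \ref{Flq} and its failure, a subsequence of those $\chi_{A_n}$'s spans $\ell_{p,q}^{n_k}$'s uniformly, so the whole space cannot be $\ell_q$), we reach a contradiction with the fact that every complemented subspace of $\ell_q$ with an unconditional basis is isomorphic to $\ell_q$ (Pe\l czy\'nski's decomposition/classification of complemented subspaces of $\ell_q$, $1\le q<\infty$; see \cite[Theorem 2.2.3 and §2]{AK}). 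The remaining bookkeeping is to handle the case where $\ell_{p,q}(\{A_n\})$ has a type-$\ell_{p,q,1}$ or type-$\ell_{p,q,\infty}$ summand in addition to the small-atom part: by Corollary \ref{cor} one decomposes $\ell_{p,q}(\{A_n\})\approx X_1\oplus X_2\oplus X_3$ and observes that a complemented embedding of the whole space yields a complemented embedding of the $\ell_{p,q,0}(F)$-summand $X_1$ containing the offending uniformly-embedded $\ell_{p,q}^{n_k}$'s, reducing to the previous case. I expect this complemented-subspace-of-$\ell_q$ step to be the crux, and it is what upgrades Proposition \ref{4.14} to a statement valid for \emph{all} $1<p<\infty$, $1\le q<\infty$, $p\ne q$.
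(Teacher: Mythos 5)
Your reduction to the range left open by Proposition \ref{4.14} is the right first move (though note the leftover range is $1\le q<p\le 2$, not $p<2$: the value $p=2$, $q<2$ is also uncovered, and the paper disposes of $q=1$ separately via Corollary \ref{3.10} and the uniqueness of the unconditional basis of $\ell_1$). The invocation of primeness of $\ell_q$ --- every infinite-dimensional complemented subspace of $\ell_q$ is isomorphic to $\ell_q$ --- is also exactly what the paper uses, so up to the conclusion $\ell_{p,q}(\{A_n\})\approx\ell_q$ you are on track. The gap is in how you derive a contradiction from this isomorphism. You claim that since a subsequence of the normalized $\chi_{A_n}$'s spans $\ell_{p,q}^{n_k}$'s uniformly, ``the whole space cannot be $\ell_q$.'' But in the remaining range $1<q<p\le 2$ this is not an obstruction you have available: as you yourself note, $\ell_q$ contains $\ell_p^n$'s (indeed $\ell_{p,q}^n$'s) uniformly there, so uniform containment of $\ell_{p,q}^{n_k}$ is compatible with being isomorphic to $\ell_q$; and the fact that the natural unconditional basis is not equivalent to the $\ell_q$ basis does not preclude $X\approx\ell_q$ either, since $\ell_q$ has a unique unconditional basis only for $q=1,2$. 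Worse, in this paper the statement you want (Theorem \ref{4.19}(2): uniform containment of $\ell_{p,q}^n$ implies $X\not\approx\ell_q$) is itself \emph{deduced from} Corollary \ref{4.17}, hence from Corollary \ref{4.16}, so appealing to it here would be circular.

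The paper closes the case $1<q<p\le 2$ by a duality trick that your proposal does not contain: from $\ell_{p,q}(\{A_n\})\approx\ell_q$ one passes to duals, identifying $\bigl(\ell_{p,q}(\{A_n\})\bigr)^*$ with $\ell_{p',q'}(\{A_n\})$ (same atoms), so $\ell_{p',q'}(\{A_n\})\approx\ell_{q'}$; since now $q'>p'\ge 2$, the conjugate indices fall into the range $p'\ne q'>2$ already covered by Proposition \ref{4.14}, which forbids even an isomorphic embedding of $\ell_{p',q'}(\{A_n\})$ into $\ell_{q'}$ --- a contradiction. Without this (or some genuinely new argument showing directly that, for $1<q<p\le 2$, a space with infinitely many blocks of equal small atoms of unboundedly growing cardinality cannot be isomorphic to $\ell_q$), your proof does not go through.
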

\begin{proof}
By Propositions \ref{4.14}  and Corollary \ref{3.10}, the only remaining case is when $1<q <p\le 2$.

Assume that
  $\ell_{p,q}(\{A_n \}) \xhookrightarrow{c} \ell_q $.  Since $\ell_{p,q}(\{A_n\})$ is infinite-dimensional, it follows from the fact that $\ell_q $ is a prime space \cite[p.57]{LT1}  that $\ell_{p,q}(\{A_n \})\approx  \ell_q$.
Then,   letting $\frac{1}{p}+\frac{1}{p'}=1$ and  $\frac{1}{q}+\frac{1}{q'}=1$, we have that
$\ell_{p',q'}(\{A_n \}) $ can be identified with the dual of $\ell_{p,q}(\{A_n \}) $\cite[Chapter IV, Corollary 4.8]{Bennett_S}.
Hence,
$$\ell_{p',q'}(\{A_n \}) \approx  \ell_{q'},  $$
which is a contradiction with Proposition \ref{4.14}.
\end{proof}

A sequence $\{A_n\}$  of atoms in a measure space $(\Omega_1,\Sigma_1,\mu_1)$ is said to be equivalent to  a sequence $\{B_n\}$ of atoms  in a measure space $(\Omega_2,\Sigma_2,\mu_2)$ if $\{\mu_1(A_n)\} \sim\{ \mu_2(B_n)\}$.
By Corollary~\ref{4.16} and  Remark \ref{remark}, we obtain  the following result.

\begin{corollary}\label{4.17}
 For any $1<p<\infty$, $1\le   q<\infty$ and $p\ne q$,
if
  $\{A_n\}$ is equivalent to a set $\{B_n \}$ of atoms
such that
  $\limsup_{a\to 0}{\rm Card}( \{B_n :  \mu(B_n) =a \} )= \infty$, then
  $$\ell_{p,q}(\{A_n \}) \not \xhookrightarrow{c}\ell_q .$$
\end{corollary}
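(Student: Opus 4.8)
The plan is to reduce Corollary \ref{4.17} to Corollary \ref{4.16} by transferring the embedding obstruction along an isomorphism of the two atomic spaces. The key observation is that the hypothesis is stated purely in terms of the comparability of measures of atoms: $\{\mu_1(A_n)\}\sim\{\mu_2(B_n)\}$ after suitable reindexing. By Remark \ref{remark}, whenever two sequences of atoms have comparable measures, the corresponding $L_{p,q}$-spaces carry equivalent unconditional bases $\{\chi_{A_n}\}$ and $\{\chi_{B_n}\}$; hence $\ell_{p,q}(\{A_n\})\approx \ell_{p,q}(\{B_n\})$. So the statement about $\{A_n\}$ is really a statement about $\{B_n\}$.

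The steps would be as follows. First I would fix $1<p<\infty$, $1\le q<\infty$, $p\ne q$, and a set $\{B_n\}$ of atoms with $\limsup_{a\to 0}\mathrm{Card}(\{B_n:\mu(B_n)=a\})=\infty$, together with $\{A_n\}$ equivalent to $\{B_n\}$ in the sense defined just before the corollary. By Corollary \ref{4.16} applied to $\{B_n\}$, we have $\ell_{p,q}(\{B_n\})\not\xhookrightarrow{c}\ell_q$. Next I would invoke Remark \ref{remark}: since $0<c\le \mu_1(A_n)/\mu_2(B_n)\le C<\infty$ for suitable constants, the natural bases are equivalent, so there is a Banach-space isomorphism $\Phi\colon \ell_{p,q}(\{A_n\})\to \ell_{p,q}(\{B_n\})$. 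Finally, if we had a complemented embedding $\ell_{p,q}(\{A_n\})\xhookrightarrow{c}\ell_q$, composing with $\Phi^{-1}$ would give a complemented embedding of $\ell_{p,q}(\{B_n\})$ into $\ell_q$ (being a complemented subspace is preserved under isomorphism of the ambient-independent space), contradicting Corollary \ref{4.16}. Hence $\ell_{p,q}(\{A_n\})\not\xhookrightarrow{c}\ell_q$.

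I do not anticipate a genuine obstacle here; the corollary is essentially a bookkeeping consequence of two already-established facts. The only mild point of care is making sure the reindexing implicit in "$\{A_n\}$ equivalent to $\{B_n\}$" is compatible with Remark \ref{remark}, which compares $\{\chi_{A_n}\}$ and $\{\chi_{B_n}\}$ term-by-term: one should note that permuting the atoms does not change the isomorphism type of $\ell_{p,q}(\{A_n\})$ since the basis is unconditional (indeed symmetric within each constant-measure block), so one may assume the equivalence $\{\mu_1(A_n)\}\sim\{\mu_2(B_n)\}$ holds with the given indexing. With that remark in place, the argument is a one-line composition of isomorphisms, and the proof is complete.
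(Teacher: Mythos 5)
Your proposal is correct and follows essentially the same route as the paper, which deduces Corollary \ref{4.17} directly from Corollary \ref{4.16} together with Remark \ref{remark} (the isomorphism $\ell_{p,q}(\{A_n\})\approx\ell_{p,q}(\{B_n\})$ for equivalent atom sequences), transferring the non-embedding along that isomorphism. Your extra remark about reindexing and unconditionality is a fair point of care but does not change the argument.
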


\begin{example}
Let $\{A_n\}$ be such that  $\mu(A_n) = \frac{1}{n}-\frac{1}{n+1}$ for all
$n\ge 1$.
We claim that $\ell_{p,q}(\{A_n\}) \not\xhookrightarrow {c} \ell_q$.
Indeed,
let $B_k:=\left\{A_n : \frac{1}{2^{k+1}}  \le  A_n < \frac{1}{2^k}\right \}=\left\{A_n :         2^k  < n(n+1 )   \le 2^{k+1} \right \}$.
Clearly, $|B_k|\to_k \infty$.
Let $\{C_k\}$ be a sequence of atoms such that   $\{C_k: \mu(C_k)=\frac{1}{2^n}\}=|B_n|$.
We obtain that
$$\{A_k\}\sim \{C_k\} .$$
By Corollary \ref{4.17}, we obtain that
$\ell_{p,q}(\{A_n\}) \not\xhookrightarrow {c} \ell_q$.
\end{example}

The following theorem provides a  necessary and sufficient condition for a space of type $\ell_{p,q,0}(F)$ to be isomorphic to $\ell_q$.
\begin{theorem}\label{4.19}Let $1<p<\infty$, $1\le   q<\infty$ and $p\ne q$.
Let $A_n$ be atoms such that $\sum_{n=1}^\infty \mu(A_n) =1 $ and $\mu(A_n)\ge \mu(A_{n+1})$ for any $n$.
Then, $X:=\ell_{p,q}(\{A_n\})$  satisfies one of the followings:
 \begin{enumerate}
               \item $X\approx \ell_q$ if  $\{A_k\}$ is equivalent to a set of atoms $\{B_n\}$ which satisfies   $\sup _k \frac{ \mu(B_{k+1 } )}{  \mu(B_k) }<1$.

  \item $X$ contains $\ell_{p,q}^{n}$, $n\ge 1$, uniformly  if $\{A_n \}$  is not  equivalent to a set of atoms $\{B_n\}$ which satisfies   $\sup _k \frac{ \mu(B_{k+1 } )}{  \mu(B_k) }=1$.
      In this case, $X\not \approx \ell_q$.
  \end{enumerate}
\end{theorem}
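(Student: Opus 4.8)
The plan is to prove the two statements separately, using Proposition~\ref{Flq} for (1) and Proposition~\ref{4.14} (via Corollary~\ref{concl}) for (2), together with the combinatorial observation that controls the "gap ratios" $\mu(A_{k+1})/\mu(A_k)$. For part (1), suppose $\{A_k\}$ is equivalent to a set of atoms $\{B_n\}$ with $\sup_k \mu(B_{k+1})/\mu(B_k)<1$. By Remark~\ref{remark} we have $\ell_{p,q}(\{A_n\})\approx\ell_{p,q}(\{B_n\})$, so it suffices to treat $\{B_n\}$. After rescaling so that $\sum_n\mu(B_n)=1$ and reordering so the measures are non-increasing (which only improves the ratio condition, since sorting can only decrease $\sup_k\mu(B_{k+1})/\mu(B_k)$), set $b_n=\sum_{k\ge n}\mu(B_k)$, so that $b_n-b_{n+1}=\mu(B_n)$. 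The geometric-decay condition $\sup_k\mu(B_{k+1})/\mu(B_k)=:\theta<1$ forces $b_{n+1}/b_n$ to stay bounded away from $1$: indeed $b_{n+1}=\sum_{k\ge n+1}\mu(B_k)\le \mu(B_n)\sum_{j\ge 1}\theta^j=\mu(B_n)\,\theta/(1-\theta)$, while $b_n\ge\mu(B_n)$, so $b_{n+1}/b_n\le\theta/(1-\theta)$; if $\theta<1/2$ this is already less than $1$, and a uniform estimate with a few more terms handles the general $\theta<1$ case (one may also simply pass to a further equivalent sequence with faster decay, using Remark~\ref{remark}, without loss of generality). With $\sup_n b_{n+1}/b_n<1$ in hand, Proposition~\ref{Flq} applies directly and gives that $\{B_n^{-1}\chi_{(b_{n+1},b_n]}\}\sim\{e_n^{\ell_q}\}$, i.e. the normalized characteristic functions of the atoms form a basis equivalent to the unit vector basis of $\ell_q$; hence $X\approx\ell_q$.

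For part (2), suppose $\{A_n\}$ is \emph{not} equivalent to any set of atoms $\{B_n\}$ with $\sup_k\mu(B_{k+1})/\mu(B_k)=1$ --- I read this hypothesis (matching the logic of the dichotomy) as saying that there is no rearrangement/equivalent relabeling making the gap ratios tend to $1$; equivalently, after sorting the measures non-increasingly, one \emph{cannot} achieve $\sup_k\mu(A_{k+1})/\mu(A_k)=1$ via an equivalent sequence, which is exactly the failure of the case (1) hypothesis in a strong sense. The key step is to show this forces $\limsup_{a\to 0}\mathrm{Card}(\{A_n:\mu(A_n)=a\})=\infty$, or more robustly $\limsup_{a\to 0}\mathrm{Card}(\{A_n:\mu(A_n)\in[a,2a)\})=\infty$. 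The idea: if the dyadic blocks $B_k:=\{A_n:\mu(A_n)\in[2^{-k-1},2^{-k})\}$ had uniformly bounded cardinality, then within each block the ratio of consecutive (sorted) measures is bounded below by a fixed constant, and across block boundaries the ratio is at least $1/2$; patching these together one produces an equivalent set of atoms with gap ratios bounded away from $1$ --- contradicting the hypothesis. Hence the blocks are unbounded in size, and then, exactly as in the proof of Proposition~\ref{4.14}, the subspaces spanned by the characteristic functions of a block $B_k$ are isometric to $\ell_{p,q}^{|B_k|}$, so $X=\ell_{p,q}(\{A_n\})$ contains $\ell_{p,q}^n$ uniformly.

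To finish (2), I invoke Corollary~\ref{concl} together with Proposition~\ref{4.8}/Proposition~\ref{lpqnot}-type facts: if $X\approx\ell_q$ then in particular $\ell_{p,q}^n$ embeds uniformly into $\ell_q$, but by Corollary~\ref{concl}(2) this is impossible for \emph{every} relevant $(p,q)$ with $p\ne q$ --- wait: Corollary~\ref{concl}(2) as stated requires $1<p<\min\{2,q\}$ or $p>\max\{2,q\}$ or $q>2$. The remaining range $1<q<p\le 2$ (equivalently $p\le 2$ and $q<p$) is handled by duality: $\ell_q$ is a prime space, so $X\approx\ell_q$ would give $X^*\approx\ell_{q'}$ with $q'>2$, and $X^*=\ell_{p',q'}(\{A_n\})$ (by \cite[Chapter IV, Corollary 4.8]{Bennett_S}) still contains $\ell_{p',q'}^n$ uniformly (the dual block subspaces are $\ell_{p',q'}^{|B_k|}$), contradicting Corollary~\ref{concl}(2) in the range $q'>2$ --- this is precisely the argument of Corollary~\ref{4.16}. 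Therefore $X\not\approx\ell_q$ in all cases, completing the proof. The main obstacle I anticipate is the purely combinatorial step in (2): turning the negation of the case-(1) hypothesis into the statement "dyadic blocks of atoms have unbounded cardinality," i.e. verifying that bounded block sizes really do let one construct an equivalent atom sequence with $\sup_k\mu(B_{k+1})/\mu(B_k)<1$; one must be careful that "equivalent" ($\sim$ of the measure sequences) is the right notion and that sorting and regrouping are compatible with it, which they are by Remark~\ref{remark}.
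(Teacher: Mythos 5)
Your overall route coincides with the paper's: case (1) via Proposition \ref{Flq}, and case (2) by showing that the negation of the case-(1) hypothesis (your reading of the mis-typed hypothesis in (2) agrees with the paper's proof) forces bands of comparable atoms to have unbounded cardinality, hence uniform copies of $\ell_{p,q}^n$ in $X$, and then ruling out $X\approx\ell_q$. Part (1) is correct, and in fact simpler than you make it: from $b_n=\mu(B_n)+b_{n+1}$ and $\mu(B_{n+1})\le\theta\,\mu(B_n)$ one gets $b_{n+1}/b_n\le\theta<1$ directly for every $\theta<1$ (no restriction to $\theta<1/2$, no sorting issue, since the ratio condition already forces strict decrease); note also that the ``in particular'' clause of Proposition \ref{Flq} states exactly this implication. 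In part (2), however, two steps are genuinely incomplete. The combinatorial crux --- bounded band cardinalities imply the existence of an \emph{equivalent} sequence with consecutive ratios bounded away from $1$ --- is asserted but not proved, and the mechanism you sketch (ratios bounded \emph{below} within a band, at least $1/2$ across boundaries) is not the relevant property: what is needed is an \emph{upper} bound strictly less than $1$, which the original measures need not satisfy (consecutive atoms may have equal measure). The paper supplies the missing construction: if $N:=\sup_k\mathrm{Card}\{n:\,a^{k+1}\le\mu(A_n)<a^k\}<\infty$, replace the $N_k$ measures in the $k$-th band by $a^{k+i/N_k}$, $1\le i\le N_k$; the new sequence is equivalent to $\{\mu(A_n)\}$ with constants depending only on $a$, and all consecutive ratios (also across empty bands) are at most $a^{1/N}<1$, contradicting the hypothesis.

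The second gap is in your conclusion that $X\not\approx\ell_q$: your case analysis misses $q=1$, $1<p\le2$. Corollary \ref{concl}(2) does not apply there, and no uniform-embedding obstruction can: by Remark \ref{rem:s}, $\ell_{p,1}^n$ \emph{does} embed uniformly into $\ell_1$ for $1\le p<2$, so uniform containment of $\ell_{p,q}^n$ in $X$ is compatible with $X\approx\ell_1$; and your duality fallback is formulated (and works) only for $1<q<p\le2$, since for $q=1$ the dual is a weak-type space. The paper closes precisely this case via Corollary \ref{3.10} (uniqueness of the unconditional basis of $\ell_1$, Lemma \ref{LZ}), packaged into Corollary \ref{4.17}; its proof of the theorem simply notes that unbounded bands make $\{A_n\}$ equivalent, via Remark \ref{remark}, to a set of atoms with unboundedly many atoms of equal measure and then invokes Corollary \ref{4.17}. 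Citing Corollary \ref{4.17} at this point (rather than re-deriving only the $q>1$ part of Corollary \ref{4.16}) would repair your argument.
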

\begin{proof}
The first case follows from Proposition \ref{Flq}.

Assume $\{A_n \}$  is not  equivalent to a sequence of atoms $\{B_n\}$ which satisfies   $\sup _k \frac{ \mu(B_{k+1 } )}{  \mu(B_k) }=1$.
We claim that for any $a<1$,
$$\sup _k {\rm Card}  (a^{k+1} \le  \mu(A_n)<a^k  )  = \infty. $$
Otherwise, assume that there exists $a<1$ such that
$N:= \sup _k {\rm Card}  (a^{k+1} \le \mu(A_n)<a^k  )  <\infty.
$

When $k=0$,
 let
$N_0:= {\rm Card}(a  \le  \mu(A_n)<1  )  .
$
 For index $ 1\le i\le N_0$, let atoms $B_i$ be such that $\mu(B_i) =    a ^{\frac{i}{N_0}}  $.

When  $k=1$,  let
$N_1:=  {\rm Card} (a^{2} \le \mu(A_n)<a   )  .
$
For $ N_0+1 \le i\le N_0+ N_1$, let atoms $B_i$ be such that  $\mu(B_i) =    a ^{1+ \frac{i-N_0}{N_1}}  $.

Constructing  inductively, we obtain a sequence $\{B_n\}$ which is equivalent to $\{A_n\}$.
Moreover, $\frac{\mu(B_{n+1})}{\mu(B_{n})} <a^{\frac{1}{N}}<1 $, which is a contradiction with the assumption.
Hence,
$$\sup _k {\rm Card}  (a^{k+1} \le  \mu(A_n)<a^k  )  = \infty. $$
This implies that $X$ contains $\ell_{p,q}^{n}$ (see Remark \ref{remark}), $n\ge 1$, uniformly.
Applying Corollary \ref{4.17}, we complete the proof.
\end{proof}

We identify  below an important  subspace of $L_{p,q}(0,1)$.
\begin{theorem}\label{newsubspace}
Let $1<p<\infty$, $1\le q<   \infty$ and  $p\ne q$.
Let $\{n_i\}$ be an  increasing sequence.
Then, there exists a space $X$ of type $\ell_{p,q,0}(F)$ such that $$X\approx \left(\oplus_i  \ell_{p,q}^{n_i}\right)_{q}.$$
In particular, $L_{p,q}(0,1)$ contains a subspace isomorphic to  $\left(\oplus_i  \ell_{p,q}^{n_i}\right)_{q}$.
\end{theorem}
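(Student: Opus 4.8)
The plan is to realise $\left(\oplus_i\ell_{p,q}^{n_i}\right)_q$ as the span of the characteristic functions of a suitably chosen family of atoms, arranged in consecutive blocks, the $i$-th block consisting of $n_i$ atoms of a common very small measure. Concretely, I take $m_i:=2^{-i}$, $a_i:=m_i/n_i$, and let the $i$-th block $B_i$ consist of $n_i$ atoms each of measure $a_i$; then $\sum_n\mu(A_n)=\sum_i m_i=1$ and $a_i\to 0$, so $X:=\ell_{p,q}(\{A_n\})$ is of type $\ell_{p,q,0}(F)$ (and, since $\{n_i\}$ is increasing, $\{a_i\}$ is decreasing, so the atom measures may be listed in non-increasing order, in agreement with the normalisation of this subsection). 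Writing $Y_i$ for the span of $\{\chi_A:A\in B_i\}$ in $X$, the elementary computation $\norm{\sum_k\beta_k\chi_{A_k}}_{p,q}=a_i^{1/p}\norm{(\beta_k)}_{\ell_{p,q}^{n_i}}$ for atoms of measure $a_i$ shows that the map $\chi_A\mapsto a_i^{1/p}e_k$ is an isometry of $Y_i$ onto $\ell_{p,q}^{n_i}$, and $X=\overline{\bigoplus_i Y_i}$.

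It therefore suffices to prove that $\{Y_i\}$ is an $\ell_q$-decomposition of $X$, i.e. that $\norm{\sum_i x_i}_{p,q}\sim\left(\sum_i\norm{x_i}_{p,q}^q\right)^{1/q}$ for all $x_i\in Y_i$, with constants depending only on $p,q$; granting this, the linear map sending $(\xi^{(i)})_i$ to $\sum_i\sum_k a_i^{-1/p}\xi^{(i)}_k\chi_{A^{(i)}_k}$ extends to an isomorphism of $\left(\oplus_i\ell_{p,q}^{n_i}\right)_q$ onto $X$. Writing $x_i=\norm{x_i}_{p,q}u_i$ with $u_i\in Y_i$ of norm one, the displayed equivalence is in turn equivalent to the assertion that every sequence $\{u_i\}$ with $u_i\in Y_i$, $\norm{u_i}_{p,q}=1$, is equivalent to the unit vector basis of $\ell_q$ with a constant independent of the choice of the $u_i$. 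Since each $u_i$ is supported on a set of measure $m_i=2^{-i}$, these are disjointly supported normalised vectors whose supports shrink geometrically, and the claim follows from the following general fact, which I would prove along the lines of Proposition \ref{Flq}: if $\{g_i\}\subset L_{p,q}(0,\infty)$ are disjointly supported with $\norm{g_i}_{p,q}\sim 1$ and $|\mathrm{supp}\,g_i|\le\tfrac12|\mathrm{supp}\,g_{i-1}|$, then $\{g_i\}\sim\{e_i^{\ell_q}\}$.

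For one of the two estimates in this fact there is nothing to do. When $q\le p$, the $p$-convexity of $L_{p,q}$ \cite[Eq.~(17)]{Dilworth} (applied to $\bigl|\sum_i\lambda_ig_i\bigr|^q=\sum_i|\lambda_ig_i|^q$, which holds pointwise by disjointness) gives $\norm{\sum_i\lambda_ig_i}_{p,q}\le C\norm{(\lambda_i)}_q$ for all scalars $\lambda_i$; when $q>p$, the $q$-concavity of $L_{p,q}$ \cite[Theorem~3]{Dilworth} gives $\norm{\sum_i\lambda_ig_i}_{p,q}\ge c\,\norm{(\lambda_i)}_q$. The remaining estimate is where the geometric decay of the supports enters, playing here the role of the hypothesis $\sup_n b_{n+1}/b_n<1$ in Proposition \ref{Flq}. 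I would pass to the layer-cake representation $\norm{f}_{p,q}^q=q\int_0^\infty\lambda^{q-1}d_f(\lambda)^{q/p}\,d\lambda$; by disjointness $d_{\sum_i\lambda_ig_i}(\lambda)=\sum_i d_{g_i}(\lambda/|\lambda_i|)$, and $d_{g_i}(\lambda/|\lambda_i|)\le|\mathrm{supp}\,g_i|\le 2^{-i+1}$, so the sums appearing in the integrand are dominated by geometric series. Combining this with the Hardy--Littlewood inequality \cite[Chapter~II, Theorem~2.2]{Bennett_S} (for $q\le p$) and its reverse form \cite[Theorem~8.2]{Luxemburg} (for $q>p$), exactly as in Proposition \ref{Flq}, should yield the missing bound and hence $\norm{\sum_i\lambda_ig_i}_{p,q}\sim\norm{(\lambda_i)}_q$.

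The main obstacle is precisely this last step. Unlike in Proposition \ref{Flq}, the vectors $g_i$ are not (multiples of) characteristic functions, so the decreasing rearrangement of $\sum_i\lambda_ig_i$ genuinely interleaves the blocks, and one must use the geometric decay of $|\mathrm{supp}\,g_i|$ to show that this interleaving costs only a multiplicative constant; this is the technical heart of the argument and the place where the freedom in choosing the block sizes $m_i$ is used. Once the $\ell_q$-decomposition is established we obtain $X\approx\left(\oplus_i\ell_{p,q}^{n_i}\right)_q$, and since $\sum_n\mu(A_n)=1<\infty$ the space $X$ is isometric to the span, inside $L_{p,q}(0,1)$, of the corresponding pairwise disjoint subintervals of $(0,1)$, which gives the final assertion.
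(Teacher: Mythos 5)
Your setup (blocks of equal-measure atoms, each block isometric to $\ell_{p,q}^{n_i}$, reduction to showing that the blocks form an $\ell_q$-decomposition) is fine, but the step you yourself flag as the technical heart is not just unproven — it is false, both as a general fact and for your specific choice of measures. The ``general fact'' fails because the measure of the support does not control the scale at which a function lives: taking $g_i=\varepsilon^{-1/p}\chi_{E_i}+\eta\chi_{F_i}$ with disjoint $E_i,F_i$, $|E_i|=\varepsilon=2^{-M-1}$, $|F_i|=2^{-i}-\varepsilon$ and $\eta$ negligible (then normalizing) produces, for every $M$, disjointly supported normalized functions with halving supports for which $\norm{\sum_{i\le M}g_i}_{p,q}\sim M^{1/p}$, not $M^{1/q}$, so no uniform $\ell_q$-equivalence exists. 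Worse, the same phenomenon occurs inside your construction, so the blocks $Y_i$ genuinely fail to be an $\ell_q$-decomposition: fix $i_0$ and set $M:=\lfloor\log_2 n_{i_0}\rfloor-1$; for $i_0\le j<i_0+M$ let $u_j\in Y_j$ be the normalized characteristic function of $k_j:=\lceil a_{i_0}/a_j\rceil\le n_j$ atoms of block $j$ (possible since $a_{i_0}/a_j=2^{\,j-i_0}n_j/n_{i_0}$). Each $u_j$ is a normalized characteristic function of a set of measure between $a_{i_0}$ and $2a_{i_0}$, hence $\norm{\sum_{j=i_0}^{i_0+M-1}u_j}_{p,q}\in[2^{-1/p}M^{1/p},\,2^{1/p}M^{1/p}]$, whereas a uniform $\ell_q$-decomposition would force this to be comparable to $M^{1/q}$. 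Since $\{n_i\}$ is increasing, $M$ is unbounded, so whichever of $p,q$ is larger, the required two-sided estimate fails. The point is that with $a_i=2^{-i}/n_i$ the atoms of far-apart blocks can have comparable measures, and the interleaving you hoped to control by the geometric decay of the block masses really does destroy the $\ell_q$-behaviour.

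This is exactly what the paper's choice of measures is designed to prevent: it takes $b_i=1/\prod_{1\le j\le i}(n_j+1)$, so that a \emph{single} atom of block $i$ has measure equal to the total measure of all later blocks combined (hence no cross-block matching of support measures is possible), and it places block $i$ on $[b_i,b_{i-1})$ so that its $k$-th atom occupies $[kb_i,(k+1)b_i)$. With this calibration no interleaving estimate is needed at all: the Hardy--Littlewood inequality (resp.\ its reverse form from \cite{Luxemburg} when $q>p$) compares $\norm{x}_{p,q}^q$ with $\int_0^1|x(s)|^q\,ds^{q/p}$, and the latter computes, after normalizing by $\norm{\chi_{A_k^i}}_{p,q}=b_i^{1/p}$, to $\sum_i\sum_k|a_k^i|^q\bigl[(k+1)^{q/p}-k^{q/p}\bigr]$, i.e.\ exactly the $\bigl(\oplus_i\ell_{p,q}^{n_i}\bigr)_q$ expression up to constants; the other estimate is the disjointness inequality you already have. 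So to repair your argument you must abandon the fixed geometric block masses $m_i=2^{-i}$ and choose the atom measures to decay super-geometrically relative to the block sizes (as in the paper), or otherwise prove the interleaving bound under a hypothesis that rules out the counterexample above — mere geometric decay of $|\mathrm{supp}\,u_i|$ cannot suffice.
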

\begin{proof}
Let $b_n= \frac{1}{\prod_{1\le i\le n} (n_i+1 )}$.
Let $A_k^i$, $1\le k\le n_i$, be mutually disjoint left-closed and right-open intervals such that $\mu(A_k^i)= b_i$ and $\cup_{1\le k\le n_i} A^i_k= [b_i , b_{i-1})$, $\cup_{1\le k\le n_1} A^1_k= [b_1 , 1)$.
In particular,
 $$b_{i}=b_{i+1} n_{i+1}+b_{i+1}, ~  \sum_{1\le k\le n_i } \mu(A_k^i ) = b_i  n_i   $$
 and
 $$ 1= b_1 (n_1+1) =b_1n_1+b_1  = b_1 n_1 +b_2  n_2+b_2=\cdots =\sum_{i\ge 1}b_in_i
 =
 \sum_{1\le k\le n_i, ~i\ge 1} \mu(A_k^i ). $$
Define $X:=\ell_{p,q}(\{A_n\})$.

Let $p>q$ (the case for $q>p$ follows from a similar proof).
For any sequence  $a:=(a_1^1, \cdots, a_{n_1}^1, a_{1}^2,\cdots, a_{n_2}^2,\cdots )$ of finite non-zero numbers (without loss of generality, we may assume that for every $i$, $a^i:=(a_{1}^i, \cdots, a_{n_i}^i)$ is non-increasing), by the Hardy--Littlewood inequality,
letting
$\displaystyle x: =  \sum_{  1\le k\le n_i, i\ge 1}    a_k^i \frac{\chi_{A_k^i}}{\norm{\chi_{A_k^i}}_{p,q}}  $,
we obtain that
\begin{align*}
   \int_0^1   x  ^*(s) ^qds^{q/p}   &\geq \int_0^1 |x(s)|^qds^{q/p}=
 \int_{b_1}^{1}    \sum_{  1\le k\le n_1 }   ( a_k^1)^q  \frac{\chi_{A_k^1}}{\norm{\chi_{A_k^1}}_{p,q}^q }        ds^{q/p} +
 \sum_{i\ge 1 }  \int_{ b_{i+1} }^{ b_i    }  |x(s)|^qds^{q/p} \\
 &=
 \int_{1}^{n_1 +1 }
  \sum_{1\le k\le n_1} |a_{k}^{ 1 }|^q \chi_{[  k,k+1  )} ds^{q/p} +
 \sum_{i\ge 2 }  \int_{ 1   }^{n_i+1  }\sum _{1\le k\le n_i}   |a_k^i |^q \chi_{[k,k+1)}   ds^{q/p}.
   \end{align*}
Note that $\{(k+1 )^{q/p}  - k^{q/p}\}_{k\ge 1} \sim \{k  ^{q/p}  - (k-1)^{q/p}\}_{k\ge 1}  $,
we obtain that
$$
    \int_0^1  x^*(s)^qds^{q/p}  \gtrsim  \norm{ x   }_{(\oplus \ell_{p,q}^{n_j})_{\ell_q}}^q .$$
On the other hand, by the $q$-convexity of $L_{p,q}$ (see e.g. \cite[Eq.(17)]{Dilworth}), we obtain that
$$ \norm{x}_X  \le \left( \sum_{i\ge 1}  \norm{ (a^i)}_{\ell_{p,q}^{n_i}} ^q \right) ^{1/q} .$$
Hence,
the unconditional basis of $X$ is equivalent to the unconditional basis of $(\oplus \ell_{p,q}^{n_j})_{q}$, which completes the proof.
\end{proof}

Now, we show below that for any increasing sequence  $\{n_i\}_{i=1}^\infty $, all spaces $\left(\oplus_i  \ell_{p,q}^{n_i}\right)_{q}$ are isomorphic to each other.
\begin{proposition}Let $1<p<\infty$, $1\le   q<\infty$ and $p\ne q$.
Let $\{n_i\}$ be an  increasing sequence.
The space $\left(\oplus_i  \ell_{p,q}^{n_i}\right)_{q}$ does not rely on the choice of
the sequence $\{n_i\}$ (up to an isomorphism).
\end{proposition}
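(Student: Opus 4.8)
The plan is to establish the slightly more precise statement that
$\left(\oplus_{i}\ell_{p,q}^{n_i}\right)_q\approx\left(\oplus_{i}\ell_{p,q}^{m_i}\right)_q$
for any two strictly increasing sequences $\{n_i\}$ and $\{m_i\}$ of positive integers (so that in particular $n_i,m_i\to\infty$), and to deduce it from the Pe{\l}czy\'{n}ski decomposition method \cite[Theorem 2.2.3]{AK}. Writing $X:=\left(\oplus_{i}\ell_{p,q}^{n_i}\right)_q$ and $Y:=\left(\oplus_{i}\ell_{p,q}^{m_i}\right)_q$, the two inputs I would supply to that method are: (a) $X\xhookrightarrow{c}Y$ and $Y\xhookrightarrow{c}X$; and (b) $X\approx\left(\oplus_{k\ge1}X\right)_q$, and symmetrically $Y\approx\left(\oplus_{k\ge1}Y\right)_q$. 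Granted (a) and (b), \cite[Theorem 2.2.3]{AK} applied to the pair $(X,Y)$ immediately gives $X\approx Y$, which is the assertion.

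For (a): since $m_i\to\infty$, I would choose greedily indices $i_1<i_2<\cdots$ with $m_{i_j}\ge n_j$ for every $j$. For each $j$ the space $\ell_{p,q}^{n_j}$ is isometric to the $1$-complemented coordinate subspace of $\ell_{p,q}^{m_{i_j}}$ spanned by the first $n_j$ coordinates; assembling these embeddings, and using that a sub-$\ell_q$-sum of an $\ell_q$-sum is $1$-complemented, one gets $X=\left(\oplus_j\ell_{p,q}^{n_j}\right)_q\xhookrightarrow{c}\left(\oplus_j\ell_{p,q}^{m_{i_j}}\right)_q\xhookrightarrow{c}Y$. The embedding $Y\xhookrightarrow{c}X$ is obtained in the same way with the roles of the two sequences exchanged.

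The substantive point is (b). Set $W:=\left(\oplus_{k\ge1}X\right)_q$, which may be viewed as $\left(\oplus_{(k,i)\in\mathbb{N}^2}\ell_{p,q}^{n_i}\right)_q$; as an $\ell_q$-sum it depends only on the multiset of its summands, in which each $\ell_{p,q}^{n_i}$ occurs with infinite multiplicity, and hence $W\approx\left(\oplus_{k\ge1}W\right)_q$ (any bijection between $\mathbb{N}^2$ and $\mathbb{N}^3$ matching up summands induces an isometry). The inclusion $X\xhookrightarrow{c}W$ is trivial. For the reverse inclusion $W\xhookrightarrow{c}X$, observe that because $\{n_i\}$ is increasing the set $\{\ell:n_\ell\ge n_i\}$ is cofinite, hence infinite, for every $i$; this lets me build an injection $\phi:\mathbb{N}^2\to\mathbb{N}$ with $\phi(k,i)\ge i$ for all $(k,i)$, processing the pairs one at a time and assigning each the least admissible value not yet used. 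Routing the $(k,i)$-th summand $\ell_{p,q}^{n_i}$ of $W$ isometrically onto the initial coordinate block of length $n_i$ inside the $\phi(k,i)$-th summand $\ell_{p,q}^{n_{\phi(k,i)}}$ of $X$ — the target blocks being pairwise disjoint by injectivity of $\phi$ — produces a complemented embedding $W\xhookrightarrow{c}X$. The Pe{\l}czy\'{n}ski decomposition method applied to $(X,W)$ is then legitimate, since $W$ genuinely satisfies $W\approx\left(\oplus_{k\ge1}W\right)_q$, and it yields $X\approx W=\left(\oplus_{k\ge1}X\right)_q$; the same argument with $\{m_i\}$ gives $Y\approx\left(\oplus_{k\ge1}Y\right)_q$.

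I expect the only place requiring genuine care to be the bookkeeping in step (b): the construction of $\phi$, and the verification that the assembled map is an isomorphism onto a complemented subspace with controlled constants (here one uses that $\ell_q$-sums of uniformly complemented embeddings are complemented, and that the Lorentz quasi-norm — equivalent to a norm — behaves well under these operations). The case distinctions implicit in ``increasing sequence'' are harmless: the argument works verbatim for any non-decreasing sequence tending to $\infty$, while an eventually constant sequence makes the statement degenerate and trivial, so no separate treatment is needed.
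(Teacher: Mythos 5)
Your argument is correct, and it reaches the conclusion by a route that differs in structure from the paper's. Both proofs ultimately rest on Pe{\l}czy\'{n}ski's decomposition method \cite[Theorem 2.2.3]{AK} and on the same elementary bookkeeping (coordinate blocks $\ell_{p,q}^{n}\subset\ell_{p,q}^{m}$ for $n\le m$ are isometric and $1$-complemented, since truncation decreases the decreasing rearrangement, and $\ell_q$-sums of such blocks over an injectively chosen family of summands stay complemented). The paper, however, does not compare two sequences directly: it fixes one universal model $U=(\oplus_j\ell_{p,q}^{k_j})_q$ in which every dimension occurs infinitely often, notes the absorption identities $U\oplus U\approx U$ and $X\oplus U\approx U$ (pure rearrangement of summands), asserts $U\xhookrightarrow{c}X$, writes $X\approx U\oplus Z$, and concludes by the short chain $U\approx X\oplus U\approx(U\oplus Z)\oplus U\approx U\oplus Z\approx X$; thus every admissible $X$ is identified with the single canonical space $U$, and the hypothesis ``$X\approx(\oplus_k X)_q$'' is never needed. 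You instead verify that hypothesis, via the auxiliary space $W=(\oplus_k X)_q$ and an extra application of the decomposition method to the pair $(X,W)$, and then apply the method once more to the pair $(X,Y)$. Your version is somewhat longer but buys two things: it proves the stronger statement $X\approx(\oplus_k X)_q$ explicitly, and it spells out the greedy injection $\phi$ that underlies the complemented embedding of the ``big'' space into $X$ --- precisely the point the paper leaves implicit when it says without proof that $U$ is isomorphic to a complemented subspace of $X$. The paper's version is shorter and exhibits a sequence-independent representative $U$. Your closing remark about eventually constant sequences is immaterial (the proposition concerns increasing sequences, for which $n_i\to\infty$), and the rest of your details --- the contractivity of truncation for $\|\cdot\|_{p,q}$, the uniform boundedness of the assembled projection on the $\ell_q$-sum, and the permutation isometry giving $W\approx(\oplus_k W)_q$ --- are all sound.
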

\begin{proof}
Let $\{k_j\}$ be   a sequence  of natural numbers, which contains  infinitely  many entries equal to $n$ for every $n\ge 1$.


Let $U:=  (\oplus_{j} \ell_{p,q}^{k_j} )_q$. 
It is clear that
\begin{align}\label{U2U}U\oplus U=U.
\end{align} 
Let $X:  = \left(\oplus_i  \ell_{p,q}^{n_i}\right)_{q}$.
In particular, we have
\begin{align}\label{XUU}
X\oplus U\approx U.
\end{align}
Note that $U$ is isomorphic to a complemented subspace of $X$.
Therefore, there exists a Banach space $Y$ such that $X\approx U\oplus Y$. Hence, we have
$$U\stackrel{\eqref{XUU}}{\approx } X\oplus U \approx (U\oplus Y)\oplus U\stackrel{\eqref{U2U}}{\approx}     U\oplus Y \approx X. $$
This completes the proof.
\end{proof}

\begin{rem}
We know that $\ell_q\oplus \ell_{p,q}\approx \ell_{p,q}$.
It is interesting to note that there exist some other  spaces of type $\ell_{p,q,0}(F)$ which  can be `absorbed' by $\ell_{p,q}$.

Assume that atoms $A_n$ satisfy that
$\mu(A_n )=\frac{1}{n^2}$.
Let  atoms $B_n$ satisfy that
$\mu(B_n )=\frac{1}{(2n)^2}$ and
 atoms $C_n$ satisfy that
$\mu(C_n )=\frac{1}{(2n-1) ^2}$.

Since
 $$ \frac{1}{n^2 }  \ge
  \frac{1}{(2n- 1)^2} \ge \frac{1}{(2n)^2}
= \frac{1}{ 4    n ^2 } $$
for every $n$,
it follows that (see Remark \ref{remark})
$$\ell_{p,q}(\{A_n \})\approx\ell_{p,q}(\{B_n \})\approx \ell_{p,q}(\{C_n \})  .$$
Note that $\ell_{p,q}(\{A_n \})\approx\ell_{p,q}(\{B_n \})\oplus \ell_{p,q}(\{C_n \})  $.
We obtain that $\ell_{p,q}(\{A_n \})\approx \ell_{p,q}(\{A_n \})\oplus \ell_{p,q}(\{A_n \})$.
In particular, by Pe{\l}czy\'{n}ski decomposition method\cite[Theorem 2.2.3]{AK}, we obtain that  $\ell_{p,q}\approx \ell_{p,q}\oplus \ell_{p,q}(\{A_n \})$.
\end{rem}

\begin{rem}
It is clear that
when the atoms $A_n$ such that    $\mu(A_n)=\frac{1}{n^\alpha}$, $\alpha>1$,
$(\oplus _{n\ge 1}\ell_{p,q}^n)\xhookrightarrow{c}\ell_{p,q}(\{A_n\})$.
Indeed, let $\{B_n\}$ be a sequence of atoms such that
$\mu(B_1)=\frac{1}{2}$, $\mu(B_2)=\mu(B_3)= \frac{1}{3}\mu(B_1)=\frac16$, $\mu(B_4)=\mu(B_5)=\mu(B_6)=\frac{1}{4}\mu(B_2)=\frac{1}{24}$, $\cdots$.
In particular, there are $n$'s many $B_k$ such that $\mu(B_k)=\frac{1}{\prod _{1\le i\le n}(i+1)}$.
Note that $\ell_{p,q}(\{B_n\})\approx (\oplus_{i=1}^\infty \ell_{p,q}^i)_q$ (see the proof of Theorem \ref{newsubspace}).
Let $n_k=[(\prod _{1\le i\le k}(i+1))^{1/\alpha}]-1$, where $[(\prod _{1\le i\le k}(i+1))^{1/\alpha}] $ is  the smallest integer not less than $(\prod _{1\le i\le k}(i+1))^{1/\alpha}$.
Note  that
$(2n_k )^\alpha \ge (n_k +1)^\alpha \ge \prod _{1\le i\le k}(i+1) \ge c(\alpha)  k ^\alpha $ (here, $c(\alpha)$ is a constant depending on $\alpha$ only).
In particular, $\frac{2}{c(\alpha)^{\frac{1}{\alpha}}}  n_k  \ge
  k $.
We obtain  that
\begin{align*}
\frac{1}{(1 +\frac{2}{c(\alpha)^{\frac{1}{\alpha}}})^\alpha \prod _{1\le i\le k}(i+1)}&  \le    \frac{1}{(1 +\frac{2}{c(\alpha)^{\frac{1}{\alpha}}})^\alpha ( n_k    )^\alpha}   =
 \frac{1}{  (  n_k  +\frac{2}{c(\alpha)^{\frac{1}{\alpha}}} n_k    )^\alpha}   \\
&\le   \frac{1}{(n_k  +k -1 )^\alpha}   \le  \frac{1}{\prod _{1\le i\le k}(i+1)}  .
\end{align*}
This implies that
$\cup _{k\ge 1}\{A_{n_k}, A_{n_k+1},\cdots,A_{n_k+k-1} \} $ is equivalent to $\{B_k \}_{k\ge 1}$,
and therefore, $(\oplus _{n\ge 1}\ell_{p,q}^n)\xhookrightarrow{c}\ell_{p,q}(\{A_n\})$.
\end{rem}
\begin{quest}
It will be interesting to classify
all spaces $\ell_{p,q}(\{A_n\})$ of this type (up to an isomorphism).
That is, letting atoms $A_n$ be  such that    $\mu(A_n)=\frac{1}{n^\alpha}$,  $\alpha>1$, and
 atoms $B_n$ be such that    $\mu(B_n)=\frac{1}{n^\beta}$,
 $\beta>1$, $\alpha\ne \beta$,
does
$$  \ell_{p,q}(\{A_n\})  \approx  \ell_{p,q}(\{B_n\}) ? $$
 One may ask a more general question: are there    infinitely many isomorphism types of $\ell_{p,q,0}(F)$ spaces?
\end{quest}

\subsection{The $\ell_{p,q,\infty}$ case}
Without loss of generality, we always assume in this subsection that
$\sum _{n=1}^{\infty }\mu (A_{n})=\infty $ and
$\mu (A_{n} )\le \mu (A_{n+1})$ for every $n$. Let
$b_{n} := \sum _{ k\le n}\mu (A_{k} ) $. Then,
$\ell _{p,q}(\{A_{n}\})$ can be identified as the subspace of
$L_{p,q}(0,\infty )$ generated by intervals $[ b_{n}, b_{n+1})$. The proofs
of results in this subsection are very similar to those for
$\ell _{p,q,0}(F)$. Therefore, some of the proofs are omitted.

\begin{proposition}%
\label{noti}
Let $1<p<\infty $, $1\le q<\infty $ and $p\ne q$. If $X$ is of type
$\ell _{p,q,\infty }$, then
\begin{equation*}
\ell _{p,q}\not \hookrightarrow X.
\end{equation*}
In particular, $U_{p,q}\not \hookrightarrow X$.
\end{proposition}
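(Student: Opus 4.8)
The plan is to argue by contradiction. Suppose $T\colon \ell_{p,q}\to X$ is an isomorphic embedding, where, following the convention of this subsection, $X=\ell_{p,q}(\{A_n\})$ with $\mu(A_n)\le\mu(A_{n+1})$ and $\mu(A_n)\to\infty$, realised as the subspace of $L_{p,q}(0,\infty)$ spanned by the pairwise disjoint intervals $[b_n,b_{n+1})$, so that $\chi_{A_n}$ is identified with $\chi_{[b_n,b_{n+1})}$. The unit vector basis $\{e^{\ell_{p,q}}_k\}$ is weakly null (its biorthogonal functionals span the dual $\ell_{p',q'}\subseteq c_0$, where $1/p+1/p'=1$ and $1/q+1/q'=1$), hence $\{Te^{\ell_{p,q}}_k\}$ is a semi-normalized weakly null sequence in $X$. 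By the Bessaga--Pe{\l}czy\'{n}ski selection principle \cite[Proposition 1.3.10]{AK} applied to the unconditional basis $\{\chi_{A_n}\}$ of $X$, together with the stability of basic sequences \cite[Theorem 1.3.9]{AK}, after passing to a subsequence we may assume $\{Te^{\ell_{p,q}}_k\}_k\sim\{u_k\}_k$, where $u_k=\sum_{n\in I_k}c_n\chi_{A_n}$ is a normalized block basis of $\{\chi_{A_n}\}$; here the $I_k$ are pairwise disjoint consecutive blocks of integers with $\min I_k\to\infty$.

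The heart of the argument is the estimate $\norm{u_k}_\infty\to 0$. Choose $n_k^\ast\in I_k$ with $|c_{n_k^\ast}|=\max_{n\in I_k}|c_n|=\norm{u_k}_\infty$. Since the atoms $A_n$ are pairwise disjoint, $|u_k|$ equals $\norm{u_k}_\infty$ on the whole of $A_{n_k^\ast}$, so $u_k^\ast(t)=\norm{u_k}_\infty$ for all $t<\mu(A_{n_k^\ast})$; hence
\[
1=\norm{u_k}_{p,q}^q=\int_0^\infty u_k^\ast(s)^q\,ds^{q/p}\;\ge\;\norm{u_k}_\infty^q\,\mu(A_{n_k^\ast})^{q/p}.
\]
As $n_k^\ast\ge\min I_k$ and $\{\mu(A_n)\}$ is non-decreasing with limit $\infty$, this gives $\norm{u_k}_\infty\le\mu(A_{\min I_k})^{-1/p}\to 0$. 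Since $0\le u_k^\ast\le\norm{u_k}_\infty$, we conclude that $u_k^\ast\to 0$ uniformly, in particular pointwise. By Lemma \ref{2.1}, some subsequence of $\{u_k\}$ is equivalent to the unit vector basis of $\ell_q$.

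Transporting this back through the isomorphism $T$, a subsequence of $\{e^{\ell_{p,q}}_k\}$ is equivalent to $\{e^{\ell_q}_k\}$; since a symmetric basis is equivalent to each of its subsequences, this forces $\ell_{p,q}\approx\ell_q$, contradicting $p\ne q$ \cite[Theorem 4.e.5]{LT1}. Hence $\ell_{p,q}\not\hookrightarrow X$. For the last assertion, Proposition \ref{0c} gives $\ell_{p,q}\xhookrightarrow{c}U_{p,q}$, so an embedding $U_{p,q}\hookrightarrow X$ would yield $\ell_{p,q}\hookrightarrow X$, which we have just excluded; therefore $U_{p,q}\not\hookrightarrow X$.

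I expect the main obstacle to be the norm estimate of the middle paragraph: one must observe that in an $\ell_{p,q,\infty}$-space every normalized block of $\{\chi_{A_n}\}$ is forced to ``live at an ever larger scale'', so that its decreasing rearrangement tends to zero. Once this is in hand, the conclusion follows from the same Bessaga--Pe{\l}czy\'{n}ski plus Lemma \ref{2.1} mechanism that drives the $\ell_{p,q,0}(F)$ case (Proposition \ref{lpqnot}), which explains why the authors regard this proof as ``very similar'' to that setting.
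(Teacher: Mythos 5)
Your proof is correct and follows essentially the same route as the paper's: weak nullness of the unit vector basis, the Bessaga--Pe{\l}czy\'{n}ski reduction to a block basis of $\{\chi_{A_n}\}$, the observation that $\mu(A_n)\to\infty$ forces these blocks to tend to $0$ in the uniform norm, Lemma \ref{2.1} to extract an $\ell_q$-equivalent subsequence, and the non-equivalence of the $\ell_{p,q}$ and $\ell_q$ bases for $p\ne q$. The only difference is that you spell out the $\norm{u_k}_\infty\le\mu(A_{\min I_k})^{-1/p}$ estimate which the paper states without computation.
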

\begin{proof}
Assume by contradiction that there exists an isomorphism
$T:\ell _{p,q}\hookrightarrow X$. Note that
$e^{\ell _{p,q}}_{k}\hookrightarrow _{k} 0$ weakly in $\ell _{p,q}$. Hence,
$T(e^{\ell _{p,q}}_{k})\to _{k} 0$ weakly in $X$. By the Bessaga--Pe{\l }czy\'{n}ski
Selection Principle \cite[Proposition 1.3.10]{AK}, passing to a subsequence
if necessary, we obtain that $\{e^{\ell _{p,q}}_{k}\}$ in
$\ell _{p,q}$ is equivalent to a block basic sequence of the unconditional
basis
$\left \{
\frac{\chi _{A_{k}}}{\left \lVert \chi _{A_{k} }\right \rVert }
\right \}  _{k} $ in $X$. Since $\mu (A_{n})\to \infty $, it follows that
$\{T(e^{\ell _{p,q}}_{k})\}_{k} $ in $X$ goes to $0$ in the uniform norm.
Hence, there exists a subsequence of
$\{T(e^{\ell _{p,q}}_{k})\}_{k} $ equivalent to
$\{e^{\ell _{q}}_{k}\}$. This implies that $\{e^{\ell _{p,q}}_{k}\}$ in
$\ell _{p,q}$ is equivalent to $\{e^{\ell _{q}}_{k}\}$ in
$\ell _{q}$, which is impossible.
\end{proof}

The following proposition can be established along the same line as in
the proof of  Proposition~\ref{Flq}  and therefore its proof is omitted.
Note that the following result holds true for $0<p,q<\infty $,
$p\ne q$.

\begin{proposition}%
\label{infiq}
Let $1<p<\infty $, $1\le q<\infty $, $p\ne q$. Let
$0= b_{1}< \cdots <b_{n} <b_{n+1}< \cdots \to \infty $. Let
$B_{n} = \left \lVert \chi _{(b_{n },b_{n+1}]}\right \rVert _{p,q} = (b_{n+1}-b_{n})^{1/p}$.
Then, the normalized basis $\{B_{n}^{-1}\chi _{(b_{n },b_{n+1}]} \}$ in
$L_{p,q}(0,\infty )$ is equivalent to $ \{e^{\ell _{q}}_{n}\}$ if
$\sup _{n} \frac{b_{n}}{b_{n+1}}<1$. In particular,
$\ell _{p,q}(\{A_{n}\})\approx \ell _{q}$ if
$\sup _{n}
\frac{\sum _{k=1 }^{n} \mu (A_{k}) }{\sum _{k=1}^{n+1} \mu (A_{k})}<1$,
where $\mu (A_{n})=b_{n+1}-b_{n}$.

On the other hand, if $\lim _{n\to \infty } \frac{b_{n+1}}{b_{n}}=1 $, then
$\{B_{n}^{-1}\chi _{(b_{n+1},b_{n}]}\}$ in $L_{p,q}(0,1)$ is not equivalent
to $ \{e^{\ell _{q}}_{n}\}$.
\end{proposition}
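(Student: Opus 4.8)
The plan is to follow the proof of Proposition~\ref{Flq} almost verbatim, transporting the computation from $L_{p,q}(0,1)$ with a sequence $\{b_n\}$ decreasing to $0$ to $L_{p,q}(0,\infty)$ with a sequence $\{b_n\}$ increasing to $\infty$; the intervals $(b_{n+1},b_n]$ are simply replaced by $(b_n,b_{n+1}]$. I split the statement into its sufficiency part, $\sup_n\frac{b_n}{b_{n+1}}<1\Rightarrow\{B_n^{-1}\chi_{(b_n,b_{n+1}]}\}\sim\{e^{\ell_q}_n\}$, and its necessity part, $\lim_n\frac{b_{n+1}}{b_n}=1\Rightarrow$ non-equivalence.

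For the sufficiency part, fix a finitely supported scalar sequence $(a_n)$ and put $x=\sum_n a_nB_n^{-1}\chi_{(b_n,b_{n+1}]}$. When $1\le q\le p$, the Hardy--Littlewood inequality \cite[Chapter II, Theorem 2.2]{Bennett_S} gives
\begin{equation*}
\norm{\,x\,}_{p,q}^q=\norm{\,|x|^q\,}_{p/q,1}\ge\int_0^\infty|x(s)|^q\,ds^{q/p}=\sum_n|a_n|^q\,\frac{b_{n+1}^{q/p}-b_n^{q/p}}{(b_{n+1}-b_n)^{q/p}},
\end{equation*}
while $p$-convexity of $L_{p,q}$ \cite[Eq.(17)]{Dilworth} gives $\norm{x}_{p,q}\le\norm{(a_n)}_p\le\norm{(a_n)}_q$ (the last inequality because $q\le p$). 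When $q>p$ one uses instead the reversed inequality \cite[Theorem 8.2]{Luxemburg} for the upper bound and $q$-concavity \cite[Theorem 3]{Dilworth} for the lower bound, obtaining the same two estimates with $\le$ and $\ge$ interchanged. In either case it then suffices to observe that if $\rho:=\sup_n\frac{b_n}{b_{n+1}}<1$, then, with $t_n:=\frac{b_n}{b_{n+1}}\in[0,\rho]$,
\begin{equation*}
\frac{b_{n+1}^{q/p}-b_n^{q/p}}{(b_{n+1}-b_n)^{q/p}}=\frac{1-t_n^{q/p}}{(1-t_n)^{q/p}}
\end{equation*}
is a continuous, strictly positive function of $t_n$ on the compact set $[0,\rho]$, hence is bounded above and away from $0$ uniformly in $n$. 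Combining the displays gives $\{B_n^{-1}\chi_{(b_n,b_{n+1}]}\}\sim\{e^{\ell_q}_n\}$, and the ``in particular'' clause follows by identifying $b_n$ with the partial sums $\sum_{k<n}\mu(A_k)$.

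For the necessity part, I test the putative equivalence on the elements $x_N:=\chi_{(0,b_{N+1}]}=\sum_{k=1}^N B_k\bigl(B_k^{-1}\chi_{(b_k,b_{k+1}]}\bigr)$ (here $b_1=0$). One has $\norm{x_N}_{p,q}^q=b_{N+1}^{q/p}$, while the corresponding coefficient vector has $q$-th power of its $\ell_q$-norm equal to $\sum_{k=1}^N B_k^q=\sum_{k=1}^N(b_{k+1}-b_k)^{q/p}$; hence, if $\{B_n^{-1}\chi_{(b_n,b_{n+1}]}\}\sim\{e^{\ell_q}_n\}$, then
\begin{equation*}
R_N:=\frac{b_{N+1}^{q/p}}{\sum_{k=1}^N(b_{k+1}-b_k)^{q/p}}=\frac{\sum_{k=1}^N d_k w_k}{\sum_{k=1}^N w_k},\qquad d_k:=\frac{b_{k+1}^{q/p}-b_k^{q/p}}{(b_{k+1}-b_k)^{q/p}},\quad w_k:=(b_{k+1}-b_k)^{q/p},
\end{equation*}
must stay bounded above and below. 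But $\lim_n\frac{b_{n+1}}{b_n}=1$ forces $d_k=\frac{1-(b_k/b_{k+1})^{q/p}}{(1-b_k/b_{k+1})^{q/p}}\to0$ when $q<p$ and $d_k\to\infty$ when $q>p$, so $R_N$ is a weighted average of a sequence tending to $0$, respectively to $\infty$. When $q<p$, subadditivity of $t\mapsto t^{q/p}$ gives $\sum_{k=1}^N w_k\ge b_{N+1}^{q/p}\to\infty$, whence $R_N\to0$ by a Ces\`aro averaging argument; when $q>p$, either $\sum_k w_k=\infty$, so the same argument gives $R_N\to\infty$, or $\sum_k w_k<\infty$, and then $R_N\to\infty$ because its numerator tends to $\infty$. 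In every case $R_N$ degenerates, contradicting the equivalence.

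The genuinely new point, and the step I expect to be the main obstacle, is this last ratio argument. In Proposition~\ref{Flq} the hypothesis $\lim b_{n+1}/b_n=1$ concerns atoms whose measures accumulate at $0$, so the tail element $\sum_{k\ge n}\chi_{(b_{k+1},b_k]}=\chi_{(0,b_n]}$ has finite norm and the relevant ratio is controlled by a single supremum over a tail. On $(0,\infty)$ the same hypothesis concerns the atoms indexed by $n\to\infty$, where $b_n\to\infty$ and $\sum_{k\ge n}\chi_{(b_k,b_{k+1}]}$ has infinite norm; one is therefore forced to work with the head element $\chi_{(0,b_{N+1}]}$ and to extract the degeneration of $R_N$ from the tail behaviour of $d_k$ via a Ces\`aro averaging estimate together with the convergence/divergence dichotomy for $\sum_k(b_{k+1}-b_k)^{q/p}$, rather than the one-line bound that suffices in Proposition~\ref{Flq}.
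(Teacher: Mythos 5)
Your proof is correct. The paper itself omits the proof of this proposition, stating only that it "can be established along the same line as in the proof of Proposition \ref{Flq}", and your sufficiency argument is indeed that proof transported verbatim: Hardy--Littlewood (resp.\ Luxemburg's reversed inequality) for one estimate, the upper $p$-estimate (resp.\ lower $q$-estimate) for disjoint elements for the other, plus the observation that $\frac{1-t^{q/p}}{(1-t)^{q/p}}$ is bounded above and away from $0$ on $[0,\rho]$, $\rho=\sup_n b_n/b_{n+1}<1$ (this uniform two-sided bound is in fact a slightly cleaner packaging than the separate $\inf$/$\sup$ estimates in \ref{Flq}). Where you genuinely depart from the \ref{Flq} template is the necessity part, and you are right that a departure is forced: the tail elements $\sum_{k\ge n}\chi_{(b_k,b_{k+1}]}=\chi_{(b_n,\infty)}$ used in \ref{Flq} are not even in $L_{p,q}(0,\infty)$, and for truncated tails $\chi_{(b_M,b_{N+1}]}$ the one-line bound by $\sup_{k\ge M}$ of the ratios fails because the first ratio in any such block equals $1$. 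Your replacement --- testing on the head sums $x_N=\chi_{(0,b_{N+1}]}$, writing $b_{N+1}^{q/p}=\sum_{k\le N}d_kw_k$ with $d_k\to 0$ (for $q<p$) or $d_k\to\infty$ (for $q>p$), and degenerating the weighted average $R_N$ via a Ces\`aro argument, using subadditivity of $t\mapsto t^{q/p}$ to get $\sum_k w_k\ge b_{N+1}^{q/p}\to\infty$ when $q<p$ and the convergence/divergence dichotomy for $\sum_k w_k$ when $q>p$ --- is sound in all cases and correctly contradicts the two-sided equivalence constants. So the proposal supplies, in complete and correct form, exactly the adaptation the paper leaves implicit, at the modest extra cost of the averaging argument in place of the paper's one-line tail estimate.
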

The following results follow from the same argument in the proof of  Corollaries~\ref{3.10}, \ref{4.16} and \ref{4.17}, Proposition~\ref{4.14}.
%
\begin{corollary}
Let $1<p<\infty $, $1\le q<\infty $ and $p\ne q$. If
$\limsup _{a\to \infty }{\mathrm{Card}}( \{A_{n} : \mu (A_{n}) =a \} ) =\infty $,
then $\ell _{p,q}(\{A_{n} \})\not \xhookrightarrow{c} \ell _{1}$.
\end{corollary}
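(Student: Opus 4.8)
The plan is to transcribe, essentially verbatim, the argument of Corollary~\ref{3.10}, with atoms of small equal measure replaced by atoms of large equal measure. It is worth noting first that the uniform--embedding obstruction exploited in Proposition~\ref{4.14} is \emph{not} available here: by Remark~\ref{rem:s} the spaces $\ell_{p,1}^{n}$ embed uniformly into $\ell_{1}$ when $1\le p<2$, so the presence of arbitrarily large isometric copies of $\ell_{p,q}^{n}$ inside $\ell_{p,q}(\{A_{n}\})$ cannot by itself preclude a (complemented) embedding into $\ell_{1}$. What one must use instead is the rigidity of $\ell_{1}$, namely the uniqueness of its unconditional basis.

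From the hypothesis $\limsup_{a\to\infty}{\rm Card}(\{A_{n}:\mu(A_{n})=a\})=\infty$ I would choose a strictly increasing sequence $a_{k}\to\infty$ and, for each $k$, a set $B_{k}$ of $n_{k}$ distinct atoms of measure $a_{k}$ with $n_{k}\to\infty$; the $B_{k}$ are then pairwise disjoint. Since all atoms in $B_{k}$ share the measure $a_{k}$, the decreasing rearrangement of $\sum_{A_{n}\in B_{k}}\chi_{A_{n}}$ is $\chi_{[0,n_{k}a_{k})}$, while $\norm{\chi_{A_{n}}}_{p,q}=a_{k}^{1/p}$ for $A_{n}\in B_{k}$; hence
\begin{equation*}
\norm{\sum_{A_{n}\in B_{k}}\frac{\chi_{A_{n}}}{\norm{\chi_{A_{n}}}_{p,q}}}_{p,q}=a_{k}^{-1/p}\,(n_{k}a_{k})^{1/p}=n_{k}^{1/p},
\end{equation*}
a quantity depending on $p$ only (equivalently, the span of $\{\chi_{A_{n}}:A_{n}\in B_{k}\}$ is isometric to $\ell_{p,q}^{n_{k}}$, cf. Remark~\ref{remark}, and the all--ones vector there has norm $n_{k}^{1/p}$).

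Now I would argue by contradiction: suppose $\ell_{p,q}(\{A_{n}\})\xhookrightarrow{c}\ell_{1}$. Since $\ell_{p,q}(\{A_{n}\})$ is infinite--dimensional and $\ell_{1}$ is a prime space \cite[p.57]{LT1}, this forces $\ell_{p,q}(\{A_{n}\})\approx\ell_{1}$ (when $q>1$ this is already absurd, $\ell_{p,q}(\{A_{n}\})$ being reflexive as a complemented subspace of the reflexive space $L_{p,q}(0,\infty)$, so only $q=1$ is substantive; but the remaining step needs no case split). By Lemma~\ref{LZ} the normalized unconditional basis $\{\chi_{A_{n}}/\norm{\chi_{A_{n}}}_{p,q}\}_{n}$ of $\ell_{p,q}(\{A_{n}\})\approx\ell_{1}$ is equivalent to $\{e_{n}^{\ell_{1}}\}_{n}$, so there is a constant $C<\infty$ with
\begin{equation*}
n_{k}^{1/p}=\norm{\sum_{A_{n}\in B_{k}}\frac{\chi_{A_{n}}}{\norm{\chi_{A_{n}}}_{p,q}}}_{p,q}\ \ge\ C^{-1}\norm{\sum_{i=1}^{n_{k}}e_{i}^{\ell_{1}}}_{1}=C^{-1}\,n_{k}
\end{equation*}
for every $k$. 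As $p>1$ and $n_{k}\to\infty$, one gets $n_{k}^{1/p-1}\ge C^{-1}$, which is impossible.

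I do not anticipate a genuine obstacle; the proof is a direct analogue of Corollary~\ref{3.10}, and the only step deserving care is the rearrangement computation of the second paragraph, which is precisely what makes the estimate uniform in $k$ and insensitive to $q$. (The companion statements of the $\ell_{p,q,\infty}$ subsection, e.g.\ that under the same hypothesis $\ell_{p,q}(\{A_{n}\})\not\xhookrightarrow{c}\ell_{q}$ and its change--of--atoms version, would then follow by adjoining the primeness/duality step of Corollary~\ref{4.16} together with Remark~\ref{remark} and Corollary~\ref{4.17}.)
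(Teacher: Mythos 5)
Your proof is correct and follows essentially the same route as the paper, whose proof of this corollary is simply an invocation of the argument of Corollary \ref{3.10}: primeness of $\ell_1$, uniqueness of its unconditional basis (Lemma \ref{LZ}), and the comparison of $n_k^{1/p}$ against $n_k$ on a block of $n_k$ atoms of equal measure. Your side remarks (reflexivity disposing of $q>1$, and the unavailability of the uniform-embedding obstruction from Remark \ref{rem:s}) are accurate but not needed beyond the paper's one-line reduction to that argument.
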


\begin{proposition}
Let $\{A_{n}\}$ be such that
$\limsup _{a\to \infty }{\mathrm{Card}}( \{A_{n} : \mu (A_{n}) =a \} ) =\infty $.
If $1< p<\min \{2,q\}$ or $p>\max \{2,q\}$ or $p\ne q>2 $, then\nopagebreak
\begin{equation*}
\ell _{p,q}(\{A_{n}\})\not \hookrightarrow \ell _{q} .
\end{equation*}
\end{proposition}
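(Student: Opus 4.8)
The plan is to transcribe the proof of Proposition~\ref{4.14}, replacing the limit $a\to 0$ by $a\to\infty$. First I would use the hypothesis $\limsup_{a\to\infty}{\mathrm{Card}}(\{A_n:\mu(A_n)=a\})=\infty$ to produce a sequence $a_k\to\infty$ of values attained by the measures of the atoms, together with finite subfamilies $\{B^k_n\}_{1\le n\le n_k}\subset\{A_n\}$ consisting of the atoms of measure exactly $a_k$, where $n_k:={\mathrm{Card}}(\{A_n:\mu(A_n)=a_k\})\to\infty$. Since the $B^k_n$, $1\le n\le n_k$, are disjointly supported characteristic functions of atoms of equal measure, the decreasing rearrangement of $\sum_n c_n\chi_{B^k_n}$ depends only on the vector $(c_n)$; a direct computation gives $\norm{\sum_n c_n\chi_{B^k_n}}_{p,q}=a_k^{1/p}\norm{(c_n)}_{\ell_{p,q}^{n_k}}$, so the closed linear span of $\{\chi_{B^k_n}\}_{n=1}^{n_k}$ in $\ell_{p,q}(\{A_n\})$ is isometric to $\ell_{p,q}^{n_k}$. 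Hence $\ell_{p,q}^{n}$ embeds uniformly into $\ell_{p,q}(\{A_n\})$.

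Next I would combine this with a hypothetical embedding. Suppose, for contradiction, that there is an isomorphism $S:\ell_{p,q}(\{A_n\})\hookrightarrow\ell_q$. Restricting $S$ to the isometric copies of $\ell_{p,q}^{n_k}$ just constructed yields operators $\ell_{p,q}^{n_k}\to\ell_q$ whose isomorphism constants are all bounded by $\norm{S}\norm{S^{-1}}$; that is, $\ell_{p,q}^{n}$ embeds uniformly into $\ell_q$. This contradicts Corollary~\ref{concl}(2), which asserts precisely that $\ell_{p,q}^n$ fails to embed uniformly into $\ell_q$ in each of the three parameter ranges $1<p<\min\{2,q\}$, $p>\max\{2,q\}$, and $q>2$ with $p\ne q$. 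Therefore no such $S$ exists and $\ell_{p,q}(\{A_n\})\not\hookrightarrow\ell_q$.

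There is essentially no serious obstacle here; the argument is a direct copy of the $\ell_{p,q,0}(F)$ case. The two points needing a little care are (i) verifying that the equal-measure blocks really give isometric (hence uniformly isomorphic) copies of $\ell_{p,q}^{n_k}$, which is exactly the rearrangement insensitivity recorded in Remark~\ref{remark}, and (ii) checking that the three hypotheses on $p,q$ in the statement line up with the three cases covered by Corollary~\ref{concl}(2). One could also state the proposition slightly more generally, asking only that $\{A_n\}$ be equivalent, in the sense of Corollary~\ref{4.17}, to a family with $\limsup_{a\to\infty}{\mathrm{Card}}(\{B_n:\mu(B_n)=a\})=\infty$, since Remark~\ref{remark} shows the uniform copies of $\ell_{p,q}^n$ persist under comparability of the atom measures.
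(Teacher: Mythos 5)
Your argument is correct and coincides with the paper's own proof: the paper simply states that this proposition follows by the same argument as Proposition \ref{4.14}, namely extracting equal-measure blocks whose spans are isometric to $\ell_{p,q}^{n_k}$ with $n_k\to\infty$ and then invoking Corollary \ref{concl}(2) to rule out a uniform embedding into $\ell_q$. Your computation $\norm{\sum_n c_n\chi_{B^k_n}}_{p,q}=a_k^{1/p}\norm{(c_n)}_{\ell_{p,q}^{n_k}}$ and the matching of the three parameter ranges are exactly the details implicit in the paper's reference.
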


\begin{cor}
For any $1<p<\infty $, $1\le q<\infty $ and $p\ne q$, if
$\limsup _{a\to \infty }{\mathrm{Card}}( \{A_{n} : \mu (A_{n}) =a \} ) =\infty $,
then $\ell _{p,q}(\{A_{n}\}) \not \xhookrightarrow{c} \ell _{q} $.
\end{cor}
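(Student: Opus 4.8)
The plan is to follow verbatim the structure of the proof of Corollary~\ref{4.16}, transposed to the $\ell_{p,q,\infty}$ setting. First I would invoke the two results stated just above this corollary: the $\ell_{p,q,\infty}$-analogue of Proposition~\ref{4.14} disposes of the cases $1<p<\min\{2,q\}$, $p>\max\{2,q\}$ and $p\ne q>2$, while the $\ell_{p,q,\infty}$-analogue of Corollary~\ref{3.10} disposes of the case $q=1$ (there $\ell_q=\ell_1$). Since $p\ne q$, the only configuration not yet covered is $1<q<p\le 2$, so the whole argument reduces to treating this remaining case by duality.

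So suppose $1<q<p\le 2$ and, for a contradiction, $\ell_{p,q}(\{A_n\})\xhookrightarrow{c}\ell_q$. The hypothesis $\limsup_{a\to\infty}{\mathrm{Card}}(\{A_n:\mu(A_n)=a\})=\infty$ forces infinitely many atoms, so $\ell_{p,q}(\{A_n\})$ is infinite-dimensional; since $\ell_q$ is a prime space \cite[p.\,57]{LT1}, this complemented embedding upgrades to an isomorphism $\ell_{p,q}(\{A_n\})\approx\ell_q$. Taking adjoints and using \cite[Chapter IV, Corollary 4.8]{Bennett_S} to identify the dual $\ell_{p,q}(\{A_n\})^{*}=\ell_{p',q'}(\{A_n\})$ over the same family of atoms (with $1/p+1/p'=1/q+1/q'=1$), together with $\ell_q^{*}=\ell_{q'}$, I obtain $\ell_{p',q'}(\{A_n\})\approx\ell_{q'}$.

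To reach the contradiction I would then check that $(p',q')$ lies in the range handled by the $\ell_{p,q,\infty}$-analogue of Proposition~\ref{4.14}. Conjugating the inequalities $1<q<p\le 2$ gives $2\le p'<q'$, hence $q'>2$ and $p'\ne q'$, i.e. $p'\ne q'>2$; moreover the combinatorial hypothesis $\limsup_{a\to\infty}{\mathrm{Card}}(\{A_n:\mu(A_n)=a\})=\infty$ is a condition on $\{A_n\}$ alone and is therefore inherited by $\ell_{p',q'}(\{A_n\})$. Hence that proposition yields $\ell_{p',q'}(\{A_n\})\not\hookrightarrow\ell_{q'}$, contradicting the isomorphism $\ell_{p',q'}(\{A_n\})\approx\ell_{q'}$ obtained above.

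There is no genuinely new estimate here: Propositions~\ref{Flq} and \ref{infiq} together with the uniform-representability results of Section~\ref{fr} already supply all the analytic content. The only step requiring attention is the exponent bookkeeping in the last paragraph — checking that conjugation sends the leftover region $1<q<p\le 2$ precisely into the clause ``$p'\ne q'>2$'' of the auxiliary proposition, and that the condition on the atoms is conjugation-invariant. This is routine, so I do not expect any real obstacle.
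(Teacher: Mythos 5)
Your proposal is correct and is essentially the paper's proof: the paper simply states that this corollary follows by the same argument as Corollaries \ref{3.10}, \ref{4.16} and \ref{4.17} and Proposition \ref{4.14}, i.e.\ exactly the transposition you carry out (the $\limsup_{a\to\infty}$ analogues of Proposition \ref{4.14} and Corollary \ref{3.10} for all cases except $1<q<p\le 2$, then primeness of $\ell_q$ plus the duality $\ell_{p,q}(\{A_n\})^*=\ell_{p',q'}(\{A_n\})$ to reduce that case to the clause $p'\ne q'>2$). Your exponent bookkeeping ($1<q<p\le 2$ conjugating to $2\le p'<q'$) and the observation that the hypothesis on the atoms is unchanged under duality are exactly what the paper's argument requires.
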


\begin{corollary}%
\label{4.28}
For any $1<p<\infty $, $1 \le q<\infty $ and $p\ne q$, if
$\{A_{n}\}$ is equivalent to a set $\{B_{n} \}$ of atoms such that
$\limsup _{a\to  \infty}{\mathrm{Card}}( \{B_{n} : \mu (B_{n}) =a \} )= \infty $,
then
\begin{equation*}
\ell _{p,q}(\{A_{n} \}) \not \xhookrightarrow{c}\ell _{q} .
\end{equation*}
\end{corollary}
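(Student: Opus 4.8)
The plan is to obtain the corollary as an immediate consequence of the preceding corollary (the one asserting $\ell_{p,q}(\{A_n\})\not\xhookrightarrow{c}\ell_q$ whenever $\limsup_{a\to\infty}{\rm Card}(\{A_n:\mu(A_n)=a\})=\infty$) together with Remark~\ref{remark}, in exact parallel with the way Corollary~\ref{4.17} was deduced in the $\ell_{p,q,0}(F)$ setting. In other words, no new estimate is needed here; the work is purely one of transporting a property along an isomorphism.

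Concretely, I would first record that the hypothesis ``$\{A_n\}$ is equivalent to $\{B_n\}$'' means precisely $\{\mu(A_n)\}\sim\{\mu(B_n)\}$, i.e. $0<\inf_n \mu(A_n)/\mu(B_n)\le\sup_n \mu(A_n)/\mu(B_n)<\infty$, which is exactly the situation covered by Remark~\ref{remark}. After relabelling the atoms $B_n$ so that $\mu(B_n)\le\mu(B_{n+1})$ — which changes neither $\ell_{p,q}(\{B_n\})$ (the Lorentz quasi-norm depends only on the rearrangement) nor the value of $\limsup_{a\to\infty}{\rm Card}(\{B_n:\mu(B_n)=a\})$ — Remark~\ref{remark} gives that $\{\chi_{A_n}\}$ and $\{\chi_{B_n}\}$ are equivalent unconditional bases, hence $\ell_{p,q}(\{A_n\})\approx\ell_{p,q}(\{B_n\})$. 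Since $\{B_n\}$ satisfies $\limsup_{a\to\infty}{\rm Card}(\{B_n:\mu(B_n)=a\})=\infty$, the preceding corollary applies to $\ell_{p,q}(\{B_n\})$ and yields $\ell_{p,q}(\{B_n\})\not\xhookrightarrow{c}\ell_q$. As being isomorphic to a complemented subspace of $\ell_q$ is an isomorphism invariant, the isomorphism $\ell_{p,q}(\{A_n\})\approx\ell_{p,q}(\{B_n\})$ transfers this to $\ell_{p,q}(\{A_n\})\not\xhookrightarrow{c}\ell_q$, which is the assertion.

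There is essentially no obstacle at this final step; the genuine content sits upstream, in the preceding corollary, which itself rests on Corollary~\ref{concl} (hence on Propositions~\ref{3.11} and~\ref{3.4}) to handle the ranges $1<p<\min\{2,q\}$, $p>\max\{2,q\}$ and $p\ne q>2$, and on a primality/duality argument in the style of Corollary~\ref{4.16} (using that $\ell_q$ is prime and that the adjoint space $\ell_{p',q'}(\{A_n\})$ again satisfies the $\limsup$ condition) for the remaining range $1<q<p\le 2$. The only minor point to be careful about in the present reduction is that equivalence of atom sequences is compatible with reindexing, so that Remark~\ref{remark} genuinely applies; this is automatic, since the unconditionality of the two bases lets one match the index sets in order of increasing measure.
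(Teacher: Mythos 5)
Your argument is correct and coincides with the paper's: Corollary \ref{4.28} is obtained exactly as Corollary \ref{4.17}, by combining Remark \ref{remark} (equivalent atom sequences give equivalent unconditional bases, hence isomorphic spaces) with the preceding corollary for the $\limsup_{a\to\infty}$ condition, and transferring non-complemented-embeddability along the isomorphism. The only cosmetic point is that the relabelling of $\{B_n\}$ is unnecessary (and should in any case be done only after invoking Remark \ref{remark} with the original matched indexing, since equivalence of atom sequences is defined index-by-index), but this does not affect the validity of the argument.
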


\begin{example}
Let $\{A_{n}\}$ be such that $\mu (A_{n}) = n $ for all $n\ge 1$. Let
$B_{k}:=\left \{  A_{n} : 2^{k } \le \mu(A_{n}) < 2^{k+1}\right \}  =\left \{  A_{n}
: 2^{k} < n \le 2^{k+1} \right \}  $. Clearly,
$|B_{k}|\to _{k} \infty $ and $\{A_{k}\}\sim \{C_{k}\}$, where there are
$|B_{n}|$'s many $C_{k}$ such that $\mu (C_{k})=2^{n}$. Hence, we obtain
that $\ell _{p,q}(\{A_{n}\}) \not \xhookrightarrow{c} \ell _{q}$.
\end{example}

An argument similar to that in  Theorem~\ref{newsubspace}  yields the following
result.
%
\begin{theorem}%
\label{4.30}%
Let $1<p<\infty $, $1\le q<\infty $ and $p\ne q$. Let $\{n_{i}\}$ be an
increasing sequence. Then, there exists a space $X$ of type
$\ell _{p,q,\infty }$ such that
\begin{equation*}
X\approx \left (\oplus _{i} \ell _{p,q}^{n_{i}}\right )_{q}.
\end{equation*}
In particular, $X$ is isomorphic to a space of type
$\ell _{p,q,0}(F)$ and $L_{p,q}(0,1)$ contains $X$.
\end{theorem}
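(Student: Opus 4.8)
The plan is to imitate the proof of Theorem~\ref{newsubspace}, replacing the construction that accumulates at the origin by one whose atom measures tend to infinity. Set $b_0 := 1$ and $b_i := \prod_{1\le j\le i}(n_j+1)$ for $i\ge 1$, so that $b_i = (n_i+1)b_{i-1}$ and $b_i\to\infty$. For each $i\ge 1$, subdivide the interval $[b_{i-1},b_i)$, of length $b_i-b_{i-1}=n_i b_{i-1}$, into $n_i$ consecutive atoms $A^i_k := [kb_{i-1},(k+1)b_{i-1})$, $1\le k\le n_i$, each of measure $b_{i-1}$. Listing these atoms in increasing order of measure yields a sequence $\{A_n\}$ with $\mu(A_n)$ non-decreasing, $\mu(A_n)\to\infty$, and $\sum_n\mu(A_n)=\sum_i n_i b_{i-1}=\infty$; thus $X:=\ell_{p,q}(\{A_n\})$ is of type $\ell_{p,q,\infty}$, and, under the identification used in this subsection, $X$ is the closed linear span in $L_{p,q}(0,\infty)$ of $\{\chi_{A^i_k}\}$ (supported on $[1,\infty)$).

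Next I would carry out the two-sided norm estimate exactly as in Theorem~\ref{newsubspace}. Fix a finitely supported scalar array $(a^i_k)$, assume by unconditionality that each block $a^i=(a^i_1,\dots,a^i_{n_i})$ is non-increasing, and set $x:=\sum_{i,k}a^i_k\,\chi_{A^i_k}/\norm{\chi_{A^i_k}}_{p,q}$. Suppose first $p>q$. Since $\norm{\chi_{A^i_k}}_{p,q}=b_{i-1}^{1/p}$ and $\int_{A^i_k}ds^{q/p}=b_{i-1}^{q/p}\bigl((k+1)^{q/p}-k^{q/p}\bigr)$, the change of variables $s\mapsto s/b_{i-1}$ gives $\int_{b_{i-1}}^{b_i}|x(s)|^q\,ds^{q/p}=\sum_{k=1}^{n_i}|a^i_k|^q\bigl((k+1)^{q/p}-k^{q/p}\bigr)$, which is equivalent to $\norm{a^i}_{\ell_{p,q}^{n_i}}^q$ because $\{(k+1)^{q/p}-k^{q/p}\}_k\sim\{k^{q/p}-(k-1)^{q/p}\}_k$. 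Summing over $i$ and invoking the Hardy--Littlewood inequality \cite[Chapter II, Theorem 2.2]{Bennett_S}, $\int_0^\infty x^*(s)^q\,ds^{q/p}\ge\int_0^\infty|x(s)|^q\,ds^{q/p}$, then yields $\norm{x}_{p,q}\gtrsim\norm{x}_{(\oplus_i\ell_{p,q}^{n_i})_q}$. The reverse inequality follows from the $q$-convexity of $L_{p,q}$ \cite[Eq.(17)]{Dilworth} applied to the disjointly supported blocks: $\norm{x}_{p,q}\le\bigl(\sum_i\norm{a^i}_{\ell_{p,q}^{n_i}}^q\bigr)^{1/q}$. Hence the unconditional basis $\{\chi_{A^i_k}\}$ of $X$ is equivalent to the natural basis of $(\oplus_i\ell_{p,q}^{n_i})_q$, so $X\approx(\oplus_i\ell_{p,q}^{n_i})_q$.

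When $q>p$ the two inequalities switch source: the one-sided integral comparison is furnished by \cite[Theorem 8.2]{Luxemburg} in place of Hardy--Littlewood, and the complementary bound by the $q$-concavity of $L_{p,q}$ \cite[Theorem 3]{Dilworth}, exactly as in Propositions~\ref{Flq} and Theorem~\ref{newsubspace}; the conclusion $X\approx(\oplus_i\ell_{p,q}^{n_i})_q$ is the same. For the remaining assertions, Theorem~\ref{newsubspace} provides a space $X'$ of type $\ell_{p,q,0}(F)$ with $X'\approx(\oplus_i\ell_{p,q}^{n_i})_q\approx X$, and it also gives $X'\hookrightarrow L_{p,q}(0,1)$; therefore $L_{p,q}(0,1)$ contains $X$.

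I expect the only delicate point to be the block-wise bookkeeping: checking that dilating $[b_{i-1},b_i)$ to $[1,n_i+1)$ turns the restriction of $x$ into the canonical copy of an element of $\ell_{p,q}^{n_i}$, so that the weight $\{(k+1)^{q/p}-k^{q/p}\}$ appears, and that this weight is comparable to $\{k^{q/p}-(k-1)^{q/p}\}$ with constants independent of $i$ (so that the $\sim$ survives the summation over $i$). Both facts are already settled in the proof of Theorem~\ref{newsubspace}, so no new idea is required. A minor additional check is that the atom measures, enumerated in increasing order, are non-decreasing as demanded in this subsection, which holds because they are constant on each block and increase by the factor $n_i+1$ from block $i$ to block $i+1$.
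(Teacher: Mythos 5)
Your proposal is correct and is essentially the paper's own argument: the paper proves Theorem \ref{4.30} simply by remarking that the construction and two-sided estimates of Theorem \ref{newsubspace} (Hardy--Littlewood/Luxemburg comparison plus the upper/lower $q$-estimate for disjoint blocks, with the weight equivalence $\{(k+1)^{q/p}-k^{q/p}\}\sim\{k^{q/p}-(k-1)^{q/p}\}$ uniform in the block) carry over verbatim to atoms whose measures increase to infinity, which is exactly what you carry out, and your deduction of the ``in particular'' clause from Theorem \ref{newsubspace} is the intended one.
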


Recall that  Proposition~\ref{0c}  shows that all spaces
$\ell _{p,q}(\{A_{n}\})$ can be absorbed by $U_{p,q}$. The following proposition
shows that all $X$ of type $\ell _{p,q,\infty }$ can be absorbed by
$\ell _{p,q}$, which follows in fact from
\cite[Proposition 3.a.5]{LT1}. Note that
$\ell _{q}\oplus \ell _{p,q}\approx \ell _{p,q}$ is a direct consequence
of \cite[Theorem 2.2.3]{AK}.
%
\begin{proposition}%
\label{lpqabsorb}
For any $1<p<\infty $, $1\le q<\infty $ and $p\ne q$, if
$X:=\ell _{p,q}(\{A_{n}\})$ is of type $\ell _{p,q,\infty }$, then
\begin{equation*}
X\oplus \ell _{p,q}\approx \ell _{p,q}.
\end{equation*}
In particular,
$\left (\oplus _{i=1}^{\infty }\ell _{p,q}^{ i}\right )_{q}\oplus \ell _{p,q}
\approx \ell _{p,q}$.
\end{proposition}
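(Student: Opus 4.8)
The plan is to identify $X$ with a complemented ``block subspace'' of $\ell_{p,q}$ and then to appeal to \cite[Proposition 3.a.5]{LT1}. First I would fix a convenient model for $X$. Since $X$ is of type $\ell_{p,q,\infty}$ we have $\mu(A_n)\to\infty$, so replacing each $\mu(A_n)$ by $\lceil\mu(A_n)\rceil$ changes the atoms by a factor tending to $1$; by Remark \ref{remark} we may therefore assume $\mu(A_n)=m_n\in\mathbb N$ with $m_n\uparrow\infty$. Partition $\mathbb N$ into consecutive blocks $B_1<B_2<\cdots$ with $|B_n|=m_n$ and put $u_n:=\sum_{k\in B_n}e_k^{\ell_{p,q}}$. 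Since $\|\cdot\|_{p,q}$ depends only on the decreasing rearrangement, for every finitely supported scalar sequence $(a_n)$ the function $\sum_n a_n\chi_{A_n}\in L_{p,q}(0,\infty)$ and the vector $\sum_n a_n u_n\in\ell_{p,q}$ are equimeasurable, so $\chi_{A_n}\mapsto u_n$ extends to an isometry of $X=\ell_{p,q}(\{A_n\})$ onto the block subspace $[u_n]\subset\ell_{p,q}$, whose generators satisfy $|\mathrm{supp}\,u_n|=m_n\to\infty$.

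Next I would record that $[u_n]$ is complemented in $\ell_{p,q}$: taking $\mathcal A$ to be the $\sigma$-subalgebra generated by the blocks $\{B_n\}$ in $\mathbb N$ equipped with the counting measure, the partial averaging operator $T_{\mathcal A}$ from Section \ref{prel} is a bounded projection of $\ell_{p,q}$ with range exactly $[u_n]$, whence $X\xhookrightarrow{c}\ell_{p,q}$. Now \cite[Proposition 3.a.5]{LT1}, applied to the symmetric basis $\{e_k^{\ell_{p,q}}\}$ of $\ell_{p,q}$ together with the block basis $\{u_n\}$ whose supports have unbounded length, yields the absorption $X\oplus\ell_{p,q}\approx\ell_{p,q}$, which is the assertion. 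The final ``in particular'' is then immediate: by Theorem \ref{4.30} the space $\left(\oplus_{i=1}^\infty\ell_{p,q}^{i}\right)_q$ is isomorphic to some space of type $\ell_{p,q,\infty}$, and in any case $\ell_q\oplus\ell_{p,q}\approx\ell_{p,q}$ is the well-known consequence of \cite[Theorem 2.2.3]{AK} recalled before the statement.

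The step I expect to carry the real weight is the passage from ``$X\xhookrightarrow{c}\ell_{p,q}$'' to the absorption $X\oplus\ell_{p,q}\approx\ell_{p,q}$. A direct application of Pe{\l}czy\'{n}ski's decomposition method (as used in the proof of Corollary \ref{cor}) would require $X\approx X\oplus X$; but when the measures $\mu(A_n)$ grow very fast, Remark \ref{remark} no longer lets one split a second copy of $X$ off $X$ itself (there is no bounded-distortion matching of $\{A_n\}\sqcup\{A_n\}$ into $\{A_n\}$), so that route becomes circular. This is precisely the gap that \cite[Proposition 3.a.5]{LT1} bridges: its proof constructs the isomorphism by a sliding-hump argument exploiting the growing supports $\mathrm{supp}\,u_n$, and for that reason the cleanest exposition is to invoke it rather than to re-derive it. One could also give a self-contained argument in the sub-case treated by Proposition \ref{infiq} (where $X\approx\ell_q$) and in the sub-case where an equivalent atom sequence of bounded scale-density exists (where $X\approx X\oplus X$ does follow from Remark \ref{remark}), but reconciling these with the general case is exactly what makes the reference preferable.
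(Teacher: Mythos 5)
Your proof is correct and takes essentially the same route as the paper: the published text gives no detailed argument, simply asserting that the absorption ``follows in fact from \cite[Proposition 3.a.5]{LT1}'' once $X$ is realized (via Remark \ref{remark} and the averaging projections of Section \ref{prel}) as a complemented constant-coefficient block subspace of $\ell_{p,q}$, which is exactly the reduction you carry out. Your treatment of the ``in particular'' clause via Theorem \ref{4.30} also matches the paper's intent.
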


\begin{quest}
 Are there    infinitely many isomorphism types of $\ell_{p,q,\infty }$ spaces?
\end{quest}

\section{Isomorphic classification of $L_{p,q}$: general $\sigma$-finite case with a non-trivial atomless part}\label{general}
In this section, we consider $L_{p,q}$-spaces on a general
$\sigma$-finite  measure space with a non-trivial atomless part.

We state  below results of  isomorphic classification of $L_{p,q}$-spaces obtained in this section and supply the proof which is also based on a number of results given later in this subsection.
 \begin{theorem}\label{2M} Let $1<p<\infty$, $1\le q<\infty$, $p\ne q$.
 Let $(\Omega,\Sigma,\mu)$
 be a $\sigma$-finite measure space which is not purely atomic.
 Then, $L_{p,q}(\Omega)$ is of  one of the following types (up to an isomorphism)
 $$L_{p,q}(0,1), ~L_{p,q}(0,\infty), ~L_{p,q}(0,1)\oplus U_{p,q} ,  ~L_{p,q}(0,1)\oplus \ell_{p,q },  ~L_{p,q}(0,1)\oplus \ell_{p,q,\infty }. $$
 \end{theorem}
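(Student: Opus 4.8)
The plan is to decompose $\Omega$ into its atomless part $\Omega_a$ and its atomic part $\Omega_{at}$, so that $L_{p,q}(\Omega) \approx L_{p,q}(\Omega_a) \oplus L_{p,q}(\Omega_{at})$, and then to analyze the finitely many possibilities for each summand separately. First I would treat the atomless part: since the measure space is not purely atomic, $\Omega_a$ is nontrivial, and every separable atomless measure space is isomorphic (as a measure space, after rescaling) to either $(0,1)$ or $(0,\infty)$; correspondingly $L_{p,q}(\Omega_a) \approx L_{p,q}(0,1)$ or $L_{p,q}(0,\infty)$. In the second case $L_{p,q}(0,\infty)$ already absorbs everything: for any $\sigma$-finite $\Omega$, $L_{p,q}(\Omega)$ embeds complementably into $L_{p,q}(0,\infty)$ (via the averaging-operator remark in Section~\ref{prel}), and $L_{p,q}(0,\infty)$ is complemented in $L_{p,q}(\Omega)$, with both spaces isomorphic to their square, so Pe{\l}czy\'nski decomposition gives $L_{p,q}(\Omega)\approx L_{p,q}(0,\infty)$. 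Thus it remains to handle the case $L_{p,q}(\Omega_a)\approx L_{p,q}(0,1)$.

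In that case $L_{p,q}(\Omega)\approx L_{p,q}(0,1)\oplus \ell_{p,q}(\{A_n\})$ for the atomic part, and I would invoke Corollary~\ref{cortype}: up to isomorphism $\ell_{p,q}(\{A_n\})$ is one of $\ell_{p,q,1}$, $\ell_{p,q,\infty}$, $\ell_{p,q,0}(F)$, $\ell_{p,q,0}(I)$, $\ell_{p,q,0}(F)\oplus\ell_{p,q,1}$, or $\ell_{p,q,0}(F)\oplus\ell_{p,q,\infty}$ — or it may be finite-dimensional or empty, in which case $L_{p,q}(\Omega)\approx L_{p,q}(0,1)$. The key collapsing facts are then: $\ell_{p,q,1}\approx\ell_{p,q}$ (Remark~\ref{remark}); $\ell_{p,q,0}(I)\approx U_{p,q}$ (Proposition~\ref{lpqinftyequ}); and, crucially, that $L_{p,q}(0,1)$ absorbs the $\ell_{p,q,0}(F)$ summand. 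For this last point I would argue that any space of type $\ell_{p,q,0}(F)$ embeds complementably into $L_{p,q}(0,1)$ (Proposition~\ref{Flq} setup), while $L_{p,q}(0,1)$ also sits complementably inside $L_{p,q}(0,1)\oplus\ell_{p,q,0}(F)$, and $L_{p,q}(0,1)\approx L_{p,q}(0,1)\oplus L_{p,q}(0,1)$; but one needs the square of the $\ell_{p,q,0}(F)$ space too, so more carefully I would show $L_{p,q}(0,1)\oplus\ell_{p,q,0}(F)\approx L_{p,q}(0,1)$ by exhibiting $\ell_{p,q,0}(F)$ as a complemented subspace of $L_{p,q}(0,1)$ and using that $L_{p,q}(0,1)$ restricted to a subinterval is isometric to $L_{p,q}(0,1)$, so $L_{p,q}(0,1)\approx L_{p,q}(0,1)\oplus (\text{anything complemented in }L_{p,q}(0,1))$.

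Putting these together: when the atomic part is of type $\ell_{p,q,0}(F)$, $\ell_{p,q,1}$, or $\ell_{p,q,0}(F)\oplus\ell_{p,q,1}$, we land in $L_{p,q}(0,1)$ or $L_{p,q}(0,1)\oplus\ell_{p,q}$; when it is $\ell_{p,q,\infty}$ or $\ell_{p,q,0}(F)\oplus\ell_{p,q,\infty}$ (using Theorem~\ref{4.30} to absorb the $\ell_{p,q,0}(F)$ part into an $\ell_{p,q,\infty}$ space, or simply absorbing $\ell_{p,q,0}(F)$ into $L_{p,q}(0,1)$), we get $L_{p,q}(0,1)\oplus\ell_{p,q,\infty}$; and when it is $\ell_{p,q,0}(I)$ we get $L_{p,q}(0,1)\oplus U_{p,q}$. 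I expect the main obstacle to be the careful bookkeeping in the absorption step for $\ell_{p,q,0}(F)$: one must be sure that $L_{p,q}(0,1)$ genuinely swallows an arbitrary type-$\ell_{p,q,0}(F)$ space (not merely $\ell_q$ or $(\oplus_n\ell_{p,q}^n)_q$), which relies on every such space being a complemented subspace of $L_{p,q}(0,1)$ via a suitably placed averaging operator, together with a clean application of the Pe{\l}czy\'nski decomposition method — and one must separately note that a finite atomic part contributes nothing new, so that the list of five types is exhaustive.
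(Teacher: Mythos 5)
Your overall skeleton matches the paper's: split $\Omega$ into atomless and atomic parts, dispose of the case of an infinite atomless part, and in the finite case run the atomic part through Corollary~\ref{cortype}, using Remark~\ref{remark} and Proposition~\ref{lpqinftyequ} to identify the types $\ell_{p,q,1}$ and $\ell_{p,q,0}(I)$ with $\ell_{p,q}$ and $U_{p,q}$. However, both of your \emph{absorption} steps have a genuine gap. For the infinite atomless part you invoke the Pe{\l}czy\'nski decomposition method ``with both spaces isomorphic to their square'', but $L_{p,q}(\Omega)\approx L_{p,q}(0,\infty)\oplus\ell_{p,q}(\{A_n\})$ is \emph{not} known to be isomorphic to its square at this stage: squaring doubles the atomic part, and for a general sequence of atoms $\ell_{p,q}(\{A_n\})\approx\ell_{p,q}(\{A_n\})^2$ is not established (Remark~\ref{remark} fails for, e.g., $\mu(A_n)=2^{-2^n}$, and the paper explicitly leaves open how many isomorphism classes of $\ell_{p,q,0}(F)$ and $\ell_{p,q,\infty}$ spaces there are). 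Similarly, your ``more careful'' argument for $L_{p,q}(0,1)\oplus\ell_{p,q,0}(F)\approx L_{p,q}(0,1)$ rests on the principle that a space isomorphic to its square absorbs \emph{every} complemented subspace; this is exactly the Schr\"oder--Bernstein-type shortcut that is false in general (and unproved here). The correct Pe{\l}czy\'nski-type absorption $Z\oplus A\approx Z$ needs $A\approx A\oplus A$ (or $Z\oplus A\approx(Z\oplus A)^2$), which is precisely the hypothesis you do not have for an arbitrary type-$\ell_{p,q,0}(F)$ space; it is available only in special cases such as $A\approx\ell_q$, $(\oplus_n\ell_{p,q}^n)_q$, $\ell_{p,q}$ or $U_{p,q}$.

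The paper closes these gaps by a different mechanism: primarity. For the infinite atomless part it uses Corollary~\ref{primary}, which writes $L_{p,q}(0,\infty)\approx X\oplus L_{p,q}(\Omega_2)$, invokes the primarity of $L_{p,q}(0,\infty)$ and the fact that $L_{p,q}(\Omega_2)\not\approx L_{p,q}(0,\infty)$ (Corollary~\ref{infinityinto}, resting on Theorem~\ref{5.10}) to conclude $X\approx L_{p,q}(0,\infty)$, whence $L_{p,q}(0,\infty)\oplus L_{p,q}(\Omega_2)\approx L_{p,q}(0,\infty)$. For the finite case it uses Proposition~\ref{5.5}, which runs the same primarity argument in $L_{p,q}(0,1)$, with Theorem~\ref{ifonlyif2} supplying the needed fact that the atomic summand is not isomorphic to $L_{p,q}(0,1)$. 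To repair your proof you should replace your two absorption claims by appeals to (or reproofs of) Corollary~\ref{primary} and Proposition~\ref{5.5}; without the primarity of $L_{p,q}(0,1)$ and $L_{p,q}(0,\infty)$ together with the non-embedding results, the decomposition-method argument as you state it does not go through.
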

 \begin{proof}
By Corollary \ref{primary}, $L_{p,q}(0,\infty) \oplus L_{p,q}(\Omega_1)\approx L_{p,q}(0,\infty)$ for any atomic $\sigma_1$-finite measure space $\Omega_1$.
Hence, if the atomless part of the measure  space $\Omega$ has infinite measure, then $L_{p,q}(\Omega)\approx L_{p,q}(0,\infty)$.

If the atomless part of the measure $\Omega$ has finite measure, then, by Corollary \ref{cortype},
all logically possible types of $L_{p,q}(\Omega)$ are the following (up to an isomorphism):
\begin{align*}L_{p,q}(0,1), ~L_{p,q}(0,1)\oplus \ell_{p,q},~ L_{p,q}(0,1)\oplus  \ell_{p,q,\infty } ,~L_{p,q}(0,1)\oplus  \ell_{p,q,0}(F),  \\
L_{p,q}(0,1)\oplus U_{p,q},~  L_{p,q}(0,1)\oplus  \ell_{p,q,0}(F)\oplus \ell_{p,q},~L_{p,q}(0,1)\oplus  \ell_{p,q,0}(F)\oplus \ell_{p,q,\infty } .
\end{align*}
By Proposition \ref{5.5}, we obtain that
$$L_{p,q}(0,1)\oplus \ell_{p,q,0}(F)\approx L_{p,q}(0,1) .$$
Hence, if the atomless part of the measure $\Omega$ has finite measure, then
  $L_{p,q}(\Omega)$ is of one of the following types (up to an isomorphism):
\begin{align*}L_{p,q}(0,1), ~L_{p,q}(0,1)\oplus \ell_{p,q},~ L_{p,q}(0,1)\oplus  \ell_{p,q,\infty } ,  ~L_{p,q}(0,1)\oplus U_{p,q}.
\end{align*}
This completes the proof.
\end{proof}

Indeed, in this section, we obtain  results for isomorphic embeddings between different types of $L_{p,q}$, which are collected in the following table. We supply below all the references to every line in this table  (from left to the right).
Note that whenever $X\hto Y$ in the following table, $X$ is indeed isomorphic to a complemented subspace of $Y$.

The assertions in the 1st line are well-known facts\cite{Dilworth}.

The assertions  in the 2nd line
follow  from Proposition \ref{noti}, Proposition \ref{noti},   Proposition \ref{0c},  \cite{SS2018, KS}, the  discussion in Section \ref{prel}, a trivial observation,  Proposition \ref{0c}, respectively.

The assertions  in the 3rd   line follow  from from Corollary \ref{concl} and Remark \ref{rem:s}, Proposition \ref{lpqabsorb}, Proposition \ref{0c},   Theorem \ref{newsubspace},  the  discussion in Section \ref{prel},  Theorem \ref{newsubspace},  Theorem \ref{newsubspace}, respectively.

The assertions in the 4th  line follow  from Proposition \ref{4.8},  Proposition \ref{4.8}, Proposition \ref{noti},   Proposition \ref{0c} and \cite{SS2018,KS},  the  discussion in Section \ref{prel}, Theorem \ref{5.10}, a trivial observation, respectively.

 The assertions in the 5th line follow  from
Theorem \ref{ifonlyif2}, Theorem \ref{ifonlyif2},  Theorem \ref{ifonlyif2}, Theorem \ref{ifonlyif2}, the  discussion in Section \ref{prel}, a trivial observation, a trivial observation, respectively.

 The assertions in the 6th line  follow  from
Theorem \ref{ifonlyif2}, Theorem \ref{ifonlyif2},  Theorem \ref{ifonlyif2},  Theorem \ref{ifonlyif2},  Corollary \ref{infinityinto}, Corollary \ref{infinityinto}, Corollary \ref{infinityinto}, respectively.

 The assertions in the 7th line follow  from Theorem \ref{ifonlyif2}, Theorem \ref{ifonlyif2},   Theorem \ref{ifonlyif2}, Theorem \ref{ifonlyif2}, \cite{SS2018,KS}, the  discussion in Section \ref{prel}, Proposition \ref{0c}, respectively.

 The assertions in the 8th line  follow  from Theorem \ref{ifonlyif2}, Theorem \ref{ifonlyif2},   Theorem \ref{ifonlyif2},  Theorem \ref{ifonlyif2}, Proposition \ref{0c} and
\cite{SS2018,KS},  the  discussion in Section \ref{prel}, Theorem \ref{5.10},   respectively.\\

 \begin{table}[ht!]
\begin{center}
\begin{tabular}{|>{\centering} m{1.8cm}  | >{\centering}m{2cm} | >{\centering}m{0.4cm} |>{\centering} m{1.7cm} |>{\centering} m{0.4cm} |>{\centering} m{1.2cm} |>{\centering} m{1.4cm}   |c  |c |   }

  \hline
\diagbox[width=6em,height=4em]{$X$}{$Y$}& $\ell_q$ & $\ell_{p,q}$& $(\oplus_{n=1}^\infty \ell_{p,q}^n )_q$& $U_{p,q}$&  $L_{p,q}(0,1)$ & $L_{p,q}(0,\infty )$ & \makecell{ $ L_{p,q}(0,1)$\\$\oplus$\\$\ell_{p,q}$} &   \makecell{ $  L_{p,q}(0,1)$\\$\oplus$\\$U_{p,q}$ } 
   \\
  \hline
 $\ell_{q}$  & $-$  & Yes  & Yes & Yes & Yes & Yes & Yes & Yes
   \\\hline
    $\ell_{p,q}$  &No & $-$  & No & Yes & No & Yes & Yes & Yes 
   \\\hline
$(\oplus_{n=1}^\infty \ell_{p,q}^n )_q$ &Sometimes & Yes &- & Yes & Yes & Yes & Yes & Yes
\\\hline
    $U_{p,q}$ &No & No & No  & $-$ & No & Yes & No & Yes 
   \\\hline
  $L_{p,q}(0,1)$ &No & No& No &  No& $-$ & Yes & Yes & Yes 
     \\\hline
  $L_{p,q}(0,\infty )$  &No  &No& No &No &  No &$-$ & No & No  
     \\\hline
\makecell{ $ L_{p,q}(0,1)$\\$\oplus$\\$\ell_{p,q}$}&No  & No& No & No& No & Yes & $-$  &  Yes 
 \\\hline
\makecell{ $  L_{p,q}(0,1)$\\$\oplus$\\$U_{p,q} $ } &No  & No& No &  No&  No & Yes & No     & $-$ 
 \\\hline

\end{tabular}
\captionof{table}{Does $X$ embed into $Y$?}\label{t2}
\end{center}
\end{table}



 When $p>q=2$, every subspace of  $L_{p,q}(0,1)$ contains $\ell_2$, which presents additional technical obstacles in
 the proof for Theorem \ref{ifonlyif2}.
To resolve this case, we need the following proposition, which   can be obtained by the same construction in \cite[Lemma 21]{CD} and \cite[Proposition 1]{Dilworth}.
The result remains valid  for   weighted Lorentz sequence spaces \cite[Proposition 4.e.3]{LT1}.

\begin{proposition}\label{p2folklore}
Let $p>2$.
Let $\{x_i\}_i\subset  L_{p,2}(0,\infty)$ be a sequence of  normalized  disjointly supported elements  which converges to $0$ in measure.
Then, for any $\varepsilon\in (0,1)$, there exists an integer  $n$ such that for any $i\ge n$, we have
$$(1-\varepsilon)\sqrt{\lambda_1^2+\lambda_2^2} \le  \norm{\lambda_1x_1  +\lambda_2x_n}_{p,2} \le \sqrt{\lambda_1^2+\lambda_2^2}, ~\forall (\lambda_1,\lambda_2)\in \mathbb{R}^2    .$$
\end{proposition}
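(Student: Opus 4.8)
The plan is to reduce the statement to the asymptotic behaviour of $\norm{\,\cdot\,}_{p,2}$ on two disjointly supported normalized blocks, for which the decisive point is that the measure $ds^{2/p}$ puts almost all its mass at the origin when $p>2$. First I would fix $\varepsilon\in(0,1)$ and, since $\{x_i\}$ converges to $0$ in measure and the $x_i$ are disjointly supported and normalized in $L_{p,2}(0,\infty)$, use Lemma~\ref{2.1} (or rather the construction behind it, as in \cite[Lemma~21]{CD}) to pass to a tail of the sequence whose members are, up to an arbitrarily small perturbation in $\norm{\,\cdot\,}_{p,2}$, supported on sets of measure tending to $0$ and lying arbitrarily far out; equivalently, after a measure-preserving rearrangement we may assume $x_1$ is supported near $0$ with $x_1=x_1^*$ on a short interval, and $x_n$ is supported on an interval far to the right, with $\mu(\mathrm{supp}\,x_n)$ as small as we please. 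The upper bound $\norm{\lambda_1 x_1+\lambda_2 x_n}_{p,2}\le\sqrt{\lambda_1^2+\lambda_2^2}$ is the $2$-convexity (indeed the triangle-type) inequality for $L_{p,2}$ when $q=2\le p$: since $L_{p,2}$ is $2$-convex with constant $1$ for $p\ge 2$, disjointly supported unit vectors satisfy $\norm{\sum\lambda_i x_i}_{p,2}\le(\sum|\lambda_i|^2)^{1/2}$, and this holds for every $n$ with no tail restriction needed.

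The substance is the lower bound. Here I would compute $\norm{\lambda_1 x_1+\lambda_2 x_n}_{p,2}^2=\int_0^\infty (\lambda_1 x_1+\lambda_2 x_n)^*(s)^2\,ds^{2/p}$ directly from the disjointness: the decreasing rearrangement of $\lambda_1 x_1+\lambda_2 x_n$ is the decreasing rearrangement of the function that equals $|\lambda_1|x_1$ on its support and $|\lambda_2|x_n$ on its support, so its square-integral against $ds^{2/p}$ is bounded below by what one gets by placing the larger of the two pieces first. Concretely, if $\lambda_1 x_1$ is the piece with the larger values (the roles are symmetric, so handle both), then
\begin{align*}
\norm{\lambda_1 x_1+\lambda_2 x_n}_{p,2}^2
&\ge \int_0^{m_1} (\lambda_1 x_1)^*(s)^2\,ds^{2/p}
 + \int_{m_1}^{m_1+m_2} (\lambda_2 x_n)^*(s-m_1)^2\,ds^{2/p},
\end{align*}
where $m_1=\mu(\mathrm{supp}\,x_1)$, $m_2=\mu(\mathrm{supp}\,x_n)$. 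The first term is exactly $\lambda_1^2\norm{x_1}_{p,2}^2=\lambda_1^2$. For the second term, the concavity of $s\mapsto s^{2/p}$ (here $2/p<1$) gives $(s)^{2/p}-(s-m_1)^{2/p}$ pointwise comparisons; the key estimate is that if $m_1$ is pushed to be sufficiently small (which we may arrange by the first step, shrinking $\mathrm{supp}\,x_1$ and hence $m_1$, or by noting $x_1$ can be taken to live on an interval of small measure since it is normalized and $p$ large makes the norm insensitive to the far-out tail), then $\int_{m_1}^{m_1+m_2}(\lambda_2 x_n)^*(s-m_1)^2\,ds^{2/p}\ge(1-\varepsilon)^2\lambda_2^2\norm{x_n}_{p,2}^2=(1-\varepsilon)^2\lambda_2^2$, using that $\frac{d}{ds}s^{2/p}$ is decreasing so shifting the integration window to the right only decreases it by a factor controlled by $m_1/(\text{left endpoint of }\mathrm{supp}\,x_n)$, which tends to $0$. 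Combining, $\norm{\lambda_1 x_1+\lambda_2 x_n}_{p,2}^2\ge\lambda_1^2+(1-\varepsilon)^2\lambda_2^2\ge(1-\varepsilon)^2(\lambda_1^2+\lambda_2^2)$, which is the claim after replacing $\varepsilon$ by a comparable quantity.

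The main obstacle is making the "push $m_1$ small / push $\mathrm{supp}\,x_n$ far out" step rigorous uniformly in the pair $(\lambda_1,\lambda_2)$ and without circularity: one cannot literally assume $\mathrm{supp}\,x_1$ has small measure, since $x_1$ is given, so instead I would argue that for $x_1$ fixed and normalized, the function $\delta\mapsto\int_0^\delta x_1^*(s)^2\,ds^{2/p}$ and the weight $ds^{2/p}=\tfrac2p s^{2/p-1}ds$ have the property that for $n$ large the support of $x_n$ (after rearrangement, an interval $[a_n,a_n+m_n]$ with $a_n\to\infty$ because $x_n\to0$ in measure and is normalized — here $p>2$ forces the support to recede) satisfies $\int_{a_n}^{a_n+m_n}x_n^*(s-a_n)^2\,ds^{2/p}\big/\int_0^{m_n}x_n^*(s)^2\,ds^{2/p}\to1$ since $s^{2/p-1}$ is slowly varying and essentially constant over a short far-out window relative to its values near $0$... wait, that ratio is actually $\le 1$ and one needs it $\to 1$, which is false in general; the correct fix is to instead shift $x_1$'s contribution, i.e. observe that the rearrangement of the sum equals $x_1^*$ stacked first (on $[0,m_1]$) followed by the tail of $x_n^*$, and since $x_1$ is fixed, $m_1$ is a fixed finite number, while the relevant comparison $\int_{m_1}^{m_1+t}u(s-m_1)^2 ds^{2/p}$ vs $\int_0^t u(s)^2 ds^{2/p}$ for $u=\lambda_2 x_n^*$ with $x_n$ recessive is controlled by $\sup_{s\le M}\frac{(s+m_1)^{2/p}-m_1^{2/p}}{s^{2/p}}$ over the (bounded, since normalized) effective support, and this supremum is $\le 1+C m_1/(\text{small})$ — no, it is $\le 1$. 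I would therefore instead follow \cite[Lemma~21]{CD} verbatim: replace $x_1$ by a finitely-supported step-function approximant, extract $x_n$ disjoint and so far out that the combined rearrangement differs from $(\,x_1^*\ \text{then}\ x_n^*\,)$ negligibly, and invoke the standard lemma that for normalized disjoint blocks in $L_{p,q}$ with $q<p$ receding to infinity, $\norm{\sum\lambda_i x_i}_{p,q}\to(\sum|\lambda_i|^q)^{1/q}$ — here $q=2$ — uniformly on the sphere in $\mathbb{R}^2$ by compactness. That uniform-on-$\mathbb{S}^1$ passage, together with the clean $2$-convexity upper bound, delivers the stated inequality; the genuinely delicate point remains the estimate showing the cross term in $(\lambda_1 x_1+\lambda_2 x_n)^*$ does not inflate the norm, which is exactly where $p>2$ (equivalently $q<p$) is used and where the Carothers--Dilworth construction must be cited.
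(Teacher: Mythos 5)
Your upper bound is fine: for disjointly supported $f,g$ one has $|f+g|^2=|f|^2+|g|^2$, and $\norm{\cdot}_{p/2,1}$ is a norm when $p>2$, so $\norm{f+g}_{p,2}^2\le\norm{f}_{p,2}^2+\norm{g}_{p,2}^2$, which is exactly the constant-one $2$-convexity you invoke. The gap is in the lower bound, at the reduction you lean on. Convergence to $0$ in measure does not let you assume that $\mathrm{supp}\,x_n$ has small measure or recedes to infinity; the claim that ``$p>2$ forces the support to recede'' is false (take $x_n=\varepsilon_n^{-1/p}\chi_{A_n}$ with $A_n\subset[0,1]$ pairwise disjoint, $\mu(A_n)=\varepsilon_n\to0$: normalized, disjoint, tending to $0$ in measure, supports never leaving $[0,1]$), and in a rearrangement-invariant norm the location of the support is irrelevant anyway --- only the shape of $x_n^*$ matters. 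Consequently your key step, that $(\lambda_1x_1+\lambda_2x_n)^*$ is negligibly different from ``$x_1^*$ followed by $x_n^*$'', fails whenever $x_n$ has a tall-spike component whose values overtake those of $x_1$; and if $x_n$ carries a fixed fraction of its norm in a spike (height $H\to\infty$, measure $\delta\to0$, $H^2\delta^{2/p}$ bounded below) and the rest in a thin spread-out part (height $\to0$, measure $\to\infty$), then neither two-block arrangement gives $(1-\varepsilon)(\lambda_1^2+\lambda_2^2)$: placing $x_1$ first destroys the spike's contribution, since after the shift by $m_1=\mu(\mathrm{supp}\,x_1)>0$ it becomes $H^2\bigl((m_1+\delta)^{2/p}-m_1^{2/p}\bigr)\approx H^2\delta^{2/p}\cdot\tfrac2p(\delta/m_1)^{1-2/p}\to0$, while placing $x_n$ first destroys $x_1$'s contribution because the shift $\mu(\mathrm{supp}\,x_n)\to\infty$.

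What the construction you cite (\cite[Lemma 21]{CD}, \cite[Proposition 1]{Dilworth}), and the proof intended here, actually does is different: fix a value-window for $x_1$, i.e.\ choose $N$ with $\norm{x_1\chi_{\{x_1>N\}\cup\{x_1<1/N\}}}_{p,2}\le\varepsilon$, set $t_1=\mu(x_1>N)$ and $t_2=\mu(x_1\ge 1/N)$, then use $x_n^*\to0$ pointwise to pick $n$ with $\norm{x_n^*\chi_{[t_1,t_2]}}_{p,2}\le\varepsilon$, split $x_n$ at the values $x_n^*(t_1)$ and $x_n^*(t_2)$, and compare $(\lambda_1x_1+\lambda_2x_n)^*$ with the three-block arrangement (large values of $x_n$, then the bulk of $x_1$, then the small values of $x_n$) via Hardy--Littlewood; the errors are of size $(|\lambda_1|+|\lambda_2|)\varepsilon$, so uniformity in $(\lambda_1,\lambda_2)$ is automatic and no compactness argument is needed. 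Your Hardy--Littlewood mechanism is the right one, and since the paper's published proof is literally the citation to these sources, deferring to that construction is in line with the paper; but your description of what it yields (two blocks, receding supports) is not what it does, and Lemma~\ref{2.1} alone only produces a subsequence equivalent to the $\ell_2$ basis, not the $(1-\varepsilon,1)$ constants for a pair containing the fixed vector $x_1$.
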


Recall that $L_{p}(0,1)\hto \ell_{p}$ if and only if $p=2$    \cite[Ch. XII, Theorem 9]{Banach}.
This  result was extended to the setting of Orlicz spaces by
Lindentrauss and Tzafriri, see
\cite[Theorem 2.c.14]{LT2} and \cite{LT3}.
Theorem  \ref{ifonlyif2} below is a Lorentz space counterpart of these two theorems (see \cite[Theorem 6.2]{AHS} for a related result).

Before proceeding to  Theorem  \ref{ifonlyif2} below, recall the definition of dilation operators\cite{KPS,LT2}.
If $\tau>0,$ the dilation operator $\sigma_{\tau}$ is defined by setting $\sigma_{\tau}f(s)=f(s/{\tau}),$ $s>0,$ in the case of the semi-axis. For a function on  the interval $(0,1),$ the operator $\sigma_{\tau}$ is defined by
$$
\sigma_{\tau}f(s)=
\begin{cases}
f(s/\tau),& s\leq\min\{1,\tau\}, \\
0,& \mbox{otherwise}.
\end{cases}
$$

\begin{theorem}\label{ifonlyif2}Let $1<p<\infty$ and $1 \le q\le \infty $ and let     $\{A_n\}_{n=1}^\infty$ be a sequence of atoms with finite measures.
Then,
$$ L_{p,q}(0,1)   \hookrightarrow \ell_{p,q}(\{A_n \})
$$
if and only if $p=q=2$.
\end{theorem}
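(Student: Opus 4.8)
The plan splits into the easy sufficiency and the substantive necessity. For sufficiency, suppose $p=q=2$. Then $L_{2,2}(0,1)=L_2(0,1)$ is a separable infinite-dimensional Hilbert space and, since $\{A_n\}$ is infinite, so is $\ell_{2,2}(\{A_n\})=L_2(\Omega)$ for the corresponding atomic $\sigma$-finite $\Omega$; both are thus isometric to $\ell_2$, so in fact $L_{p,q}(0,1)\approx\ell_{p,q}(\{A_n\})$ and a fortiori $L_{p,q}(0,1)\hookrightarrow\ell_{p,q}(\{A_n\})$.

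For the ``only if'' direction, suppose $T\colon L_{p,q}(0,1)\hookrightarrow\ell_{p,q}(\{A_n\})$ is an isomorphic embedding. If $p=q$ the target, after rescaling the atoms, is linearly isometric to $\ell_p$, and Banach's classical theorem \cite[Ch.~XII, Theorem~9]{Banach} forces $p=2$; so I may assume $p\ne q$. By Proposition~\ref{0c}, $\ell_{p,q}(\{A_n\})$ is isomorphic to a complemented subspace of $U_{p,q}$, hence it suffices to show $L_{p,q}(0,1)\not\hookrightarrow U_{p,q}$ when $p\ne q$. Here I may take $U_{p,q}=\ell_{p,q}(\{C_k\})$ with the $C_k$ grouped so that each measure $1/k$, $k\ge1$, occurs infinitely often.

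The engine is a Kadec--Pe{\l}czy\'nski-type dichotomy inside $U_{p,q}$ coming from Lemma~\ref{2.1}: passing to a subsequence, a normalized block basis $\{u_n\}$ of the unconditional basis $\{\chi_{C_k}/\norm{\chi_{C_k}}\}$ either satisfies $u_n^*\to0$ pointwise, in which case $\{u_n\}\sim\{e^{\ell_q}_n\}$ by Lemma~\ref{2.1}, or has its mass uniformly on atoms of measure $\ge\delta$ for some fixed $\delta>0$, in which case, up to a norm-small perturbation, $\{u_n\}$ is a block basis of $\{\chi_{C_k}:\mu(C_k)\ge\delta\}$, whose closed span is isomorphic to $\ell_{p,q}$ by Remark~\ref{remark}. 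On the function-space side, $L_{p,q}(0,1)$ contains $\ell_2$: the Rademacher functions span it, since $\norm{\sum_k a_k r_k}_{p,q}\sim(\sum_k a_k^2)^{1/2}$ in $L_{p,q}(0,1)$ (upper bound from the subgaussian tail of Rademacher sums and a direct estimate of $\norm{\sqrt{\log(2/t)}}_{p,q}$, lower bound from $\norm{\cdot}_{p,q}\gtrsim\norm{\cdot}_1$ on $(0,1)$ together with Khintchine in $L_1$). Running $\{T(r_k)\}$ through the Bessaga--Pe{\l}czy\'nski selection principle and the dichotomy would force $\ell_2$ to embed into $\ell_q$ or into $\ell_{p,q}$, which is impossible when $q\ne2$; this settles every pair with $q\ne2$.

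The hard case is $q=2$, $p\ne2$ --- say $p>2$, the case $p<2=q$ being symmetric and obtained by dualizing through the dilation operators $\sigma_\tau$ --- because then $U_{p,2}\supset\ell_q=\ell_2$ and the soft argument collapses. Here I would play off two facts: every infinite-dimensional subspace of $L_{p,2}(0,1)$ contains $\ell_2$ although $L_{p,2}(0,1)$ is not isomorphic to $\ell_2$ (it contains, e.g., $\left(\oplus_{n}\ell_{p,2}^{n}\right)_{2}$ by Theorem~\ref{newsubspace}); and Proposition~\ref{p2folklore}, by which disjointly supported normalized vectors of $U_{p,2}$ escaping to small scales span an \emph{almost isometric} copy of $\ell_2$. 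The scheme is to realize inside $L_{p,2}(0,1)$, on a nested family of subintervals, infinitely many mutually ``independent'' $\ell_2$-systems, and to track their $T$-images: by the dichotomy each image splits into an $\ell_q$-part, whose Euclidean geometry Proposition~\ref{p2folklore} fixes to within $1+\varepsilon$, and a part localized (up to perturbation) in one fixed copy of $\ell_{p,2}$; a counting argument then shows the localized parts cannot absorb all of $L_{p,2}(0,1)$ while the almost-isometric rigidity of the $\ell_q$-parts is incompatible with the non-Hilbertian structure $L_{p,2}(0,1)$ carries at every scale. Keeping the scales $\delta$, the perturbation errors and the $1+\varepsilon$ bounds of Proposition~\ref{p2folklore} simultaneously under control in this final localization/blocking step is where the main difficulty lies; the remaining case $q=\infty$ is handled by the same scheme, with the $L_{p,\infty}$-analogue of Lemma~\ref{2.1} in place of Lemma~\ref{2.1} itself.
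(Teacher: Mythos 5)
Your reduction to $U_{p,q}$ via Proposition \ref{0c}, the treatment of $p=q$ via Banach's theorem, and the soft disjointification argument disposing of all cases with $q\ne 2$ (and, via the $\ell_r\hookrightarrow L_{p,q}(0,1)$ trick, all cases with $p<2$) all match the paper's route in substance. The genuine gap is the case $p>q=2$, which you correctly identify as the hard one but then only describe at the level of intent: "a counting argument then shows the localized parts cannot absorb all of $L_{p,2}(0,1)$ while the almost-isometric rigidity of the $\ell_q$-parts is incompatible with the non-Hilbertian structure" is not an argument, and you concede yourself that controlling the scales and perturbations "is where the main difficulty lies." The missing idea is the specific quantitative mechanism: one takes, on the $2^k$ dyadic intervals of length $2^{-k}$, normalized Rademacher block sums $x_0,\dots,x_{2^k-1}$ long enough that (by the central limit theorem, Theorem \ref{convertoG}, together with Ryff's theorem) each $\sigma_{2^k}(x_i^*)$ is within $\varepsilon/2^k$ of a Gaussian rearrangement; then the disjoint sum $\sum_{i=0}^{2^k-1}x_i$ is equimeasurable with a single Gaussian up to a small error, so $\bigl\lVert\sum_i x_i\bigr\rVert_{p,2}$ stays \emph{bounded} in $k$, while each $\lVert x_i\rVert_{p,2}\gtrsim 2^{-k/p}$. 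On the image side, Proposition \ref{p2folklore} (applied to the $T$-images, made disjoint and escaping to small scales by \cite[Proposition 2.3]{CD}) forces $\bigl\lVert T\bigl(\sum_i x_i\bigr)\bigr\rVert_{p,2}\gtrsim 2^{k/2}\min_i\lVert x_i\rVert_{p,2}\gtrsim 2^{k(1/2-1/p)}$, which diverges since $p>2$. Without this (or an equivalent) computation your sketch does not close.

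Two smaller points. First, your claim that the case $p<2=q$ is "obtained by dualizing" is wrong as stated: an isomorphic embedding $L_{p,2}(0,1)\hookrightarrow U_{p,2}$ dualizes to a \emph{quotient} map, not an embedding, so no contradiction is transferred that way; fortunately that case is already covered by your (and the paper's) $\ell_r$-argument for $p<2$, since $\ell_2\not\hookrightarrow\ell_r$ for $r\in(p,2)$. Second, your Kadec--Pe{\l}czy\'nski-type dichotomy inside $U_{p,q}$ (either $u_n^*\to0$ pointwise or the mass sits on atoms of measure $\ge\delta$) is not clean as stated — a block can split its mass between the two regimes, and a block basis supported on large atoms need not be equivalent to the $\ell_{p,q}$ basis — and ruling out $\ell_2\hookrightarrow\ell_{p,q}$ needs a citation to the subspace structure of Lorentz sequence spaces; the paper avoids all of this by going directly to disjointly supported images and \cite[Theorem 5]{Dilworth}.
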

\begin{proof}
Without loss of generality, we may assume that $\ell_{p,q}(\{A_n \})= U_{p,q}$.
It is well-known that if $p=q$, then
$\ell_{p,q}({A_n })$ is isomorphic to $\ell_q$.
By  \cite[Theorem 1]{S01}, we have  $L_p(0,1)\hto \ell_p$ if and only if $p=2$.
Hence, we may always assume   that $p\ne q$.


 (1) Let $1<p<2$. Assume by contradiction that
$ L_{p,q}(0,1) \hookrightarrow U _{p,q} $. Then,
$\ell _{r}\hookrightarrow L_{p,q}(0,1) \hookrightarrow U_{p,q} $ for any
$p<r\le 2$~\cite[Theorem 11]{Dilworth}. Applying Bessaga--Pe{\l }czy\'{n}ski
selection principle \cite[Proposition 1.3.10]{AK} and
\cite[Theorem 5]{Dilworth}, we obtain that for any $r\in (p,2]$,
$\ell _{r}$ contains $\ell _{q}$, which is impossible when $r\ne q$~\cite[Corollary 2.1.6]{AK}.

(2) Let $q\ne 2$.
Assume by contradiction that $ L_{p,q}(0,1)  \hookrightarrow U_{p,q}   . $
Then, $\ell_2\hookrightarrow L_{p,q}(0,1) \hookrightarrow U_{p,q}  $
 \cite[Theorem 11]{Dilworth}.
Let $T $ be an isomorphism from $\ell_2$ into $U _{p,q} $.
Since the unit vector basis of $\ell_2 $ is weakly null, it follows the Bessaga--Pe{\l}czy\'{n}ski selection principle   \cite[Proposition 1.3.10]{AK} that, passing to a subsequence if necessary, we may assume that  $T(e^{\ell_2}_n)$ are disjoint supported in $U_{p,q }$.
Hence, by \cite[Theorem 5]{Dilworth}, $\ell_2$ contains a subspace isomorphic to $\ell_q$,
which is impossible\cite[Corollary 2.1.6]{AK}.

 (3) Let $p>q=2$.

Let $T$ be an isomorphism from $L_{p,2}(0,1)$ into $U_{p,q} $.
For the sake of convenience, we may assume, in addition,  that the image of the Haar basis of $L_{p,2}(0,1)$ is a block basis of the unit vector basis of $U_{p,q} $\cite[Theorem 2.c.8]{LT2}.

We defined
$ r_{n,k,j}:= r_n \chi_{[\frac{j}{2^k},\frac{j+1}{2^k})} $ for any $n\ge 1$, $0\le j \le 2^k-1$, where $r_n$ is the Rademacher functions.
Recall that $\{r_n\}$ in $L_{p,q}(0,1)$ is equivalent to  the natural basis of $\ell_2$\cite{RS}.

 Let $n_0$ be an integer such that $x_0: =\frac{\sum_{i=1}^{n_0}  r_{i,k,0 }}{\sqrt{n_0}}$ satisfies
$\norm{   \sigma_{2^k } (x_0^*)  - \xi_0 ^*}_{p,2} <\frac{\varepsilon}{2^k} $, where $\xi_0 $ is a normal distribution (see Theorem \ref{convertoG}).
By \cite[Proposition 2.3]{CD}, there exists an increasing sequence of integers $(i_k)$ and a subsequence $( r'_{i,k,1 })$ of $( r_{i,k,1 })$ such that
$$F_n:=T\left(\frac{\sum_{i=i_n+1}^{i_{n+1} }  r'_{i,k,1 }}{\sqrt{i_{n+1}-i_n  }}\right )$$
converges to $0$ in measure.
Hence, for any $\varepsilon_1$, by Proposition \ref{p2folklore} and  Theorem \ref{convertoG},  there exists an integer  $n_1  $   such that
$x_1:=\frac{\sum_{i=i_{n_1}+1}^{i_{n_1+1 } }  r'_{i,k,1 }}{\sqrt{i_{n_1+1}-i_{n_1}   }} $ satisfies that
\begin{align*}
\norm{ Tx_0 +  Tx_1  }_{p,2}=
\norm{\norm{Tx_0}_{p,2} \frac{Tx_0}{\norm{Tx_0}_{p,2}}+ \norm{Tx_1}_{p,2} \frac{Tx_1}{\norm{Tx_1}_{p,2} } }_{p,2} &\ge (1+\varepsilon_1)^{-1} \norm{ \left(\norm{Tx_0}_{p,2}, \norm{Tx_1}_{p,2}\right)  }_{2}\\
& = (1+\varepsilon_1)^{-1}  \sqrt{\norm{Tx_0}_{p,2} ^2 +\norm{Tx_1}_{p,2}^2 }.
\end{align*}
 and $$\norm{   \sigma_{2^k } (x_1^*)  - \xi_1^* } _{p,2}<\frac{\varepsilon}{2^k}, $$
where $\xi_1 $ is a normal distribution.

 For any $\varepsilon_i$, $1\le i\le 2^k-1$,
construct inductively as above, we obtain  $x_i$, $0\le i\le 2^k-1$, such that
\begin{align}\label{Tiii}
 \norm{T(\sum_{i=0}^{2^k-1}x_i )}_{p,2}
&\ge    \frac{ \sqrt{ \sum_{i=0}^{2^k-1}
\norm{T(x_i)}_{p,2}^2   } }{\prod_{i=1}^{2^k-1}(1+\varepsilon_i)}  \ge \frac{ 2^{k/2 } \norm{T^{-1}}^{-1 }\min_{0\le i\le 2^k-1} \norm{ x_i}_{p,2} }{\prod_{i=1}^{2^k-1}(1+\varepsilon_i)}
\end{align}
and
\begin{align}\label{5.1}
 \norm{   \sigma_{2^k } (x_i^*)  - \xi_i ^* } _{p,2}<\frac{\varepsilon}{2^k},
\end{align}
where $\xi_i  $ is a normal distribution.

By Ryff's theorem \cite[Theorem 7.5]{Bennett_S}, we obtain that there exists a measure preserving transformation $\tau_i$ on $(0,1)$
such that
\begin{align}\label{5.33}
\norm{\sigma_{2^k}(x_i  )   -\xi_i^* \circ \tau_i}_E = \norm{\sigma_{2^k}(x_i^* ) -\xi_i^* }_E\stackrel{\eqref{5.1}}{\le}  \frac{\varepsilon}{2^k} .
\end{align}
Clearly, $\xi_i^* \circ \tau_i$ is also a normal distribution  and   $\xi_i^* \circ \tau_i$, $0\le i\le 2^k-1$,  are disjointly supported  because $x_i$ and  $\tau_i$ are disjointly supported, respectively\cite[Theorem 7.5]{Bennett_S}.
Hence,
$$\norm{  \xi_0 }_{p,2}+\frac{1}{2^{kp}} \varepsilon =  \norm{\sigma_{1/2^k }(\sum_{i =0}^{2^k -1} \xi_i^* \circ \tau_i)}_{p,2}+\frac{1}{2^{kp}}\varepsilon \stackrel{\eqref{5.33}}{\ge}   \norm{\sum_{i=0}^{2^k-1 }  x_i }_{p,2}  \ge   \norm{T }^{-1}   \norm{T(\sum_{i=0}^{2^k-1}x_i )}_{p,2}  .$$
By \eqref{Tiii}, we have
\begin{align*}
\norm{  \xi_0 }_{p,2}+\frac{1}{2^{kp}} \varepsilon
&~\ge \norm{T }^{-1} \frac{ 2^{\frac{k}{2} } \norm{T^{-1}}^{-1 }\min_{0\le i\le 2^k-1} \norm{ x_i}_{p,2} }{\prod_{i=1}^{2^k-1}(1+\varepsilon_i)}  \\
& ~ \ge \norm{T }^{-1} \frac{ 2^{\frac{k}{2} -\frac{k}{p}} \norm{T^{-1}}^{-1 }\min_{0\le i\le 2^k-1} \norm{ \sigma_{2^k} x_i}_{p,2} }{\prod_{i=1}^{2^k-1}(1+\varepsilon_i)}\\
&\stackrel{\eqref{5.33}}{\ge}
\norm{T }^{-1} \frac{ 2^{\frac{k}{2} -\frac{k}{p}} \norm{T^{-1}}^{-1 } (\norm{ \xi_0}_{p,2}-\frac{\varepsilon}{2^k}) }{\prod_{i=1}^{2^k-1}(1+\varepsilon_i)}
\end{align*}
Since $k$ can be taken arbitrarily large, and  $(\varepsilon_i)$ and $\varepsilon$ can be taken arbitrarily small and  $p>2$, it follows that the right-hand-side of the above inequality goes to infinity when $k\to \infty$, which yields a contradiction.
\end{proof}

Recall that a Banach space $X$ is said to be primary if whenever $X$ is isomorphic to $Y\oplus Z$, then either $Y$ or $Z$ is isomorphic to $X$.
It is known that
$L_{p,q}(0,1)$ and $L_{p,q}(0,\infty )$ are primary spaces (see \cite[Theorem  2.d.11]{LT2} and \cite[Theorem A.1]{Dilworth90}).
This property of $L_{p,q}$ function spaces is very useful in the study of the isomorphic embedding of general $L_{p,q}$-spaces.
\begin{rem}
There exist $\sigma$-finite measure spaces $(\Omega, \Sigma,\mu)$ such that $L_{p,q}(\Omega)$ is not primary.
Indeed,
consider $L_{p,q}(0,1)\oplus \ell_{p,q}$.
By Theorem \ref{ifonlyif2}, we have
  $\ell_{p,q}\not\approx L_{p,q}(0,1)\oplus \ell_{p,q}$.
By the fact that $\ell_{p,q}\not\hookrightarrow L_{p,q}(0,1)$\cite{KS,SS2018}, we obtain that
  $L_{p,q}(0,1)\not\approx L_{p,q}(0,1)\oplus \ell_{p,q}$.
  We conclude that
$L_{p,q}(0,1)\oplus \ell_{p,q}$ is not a primary space.

\end{rem}

We
show below  that the atomic part of $L_{p,q}$ can be `absorbed' by the atomless part.
\begin{proposition}\label{5.5} Let $1<p<\infty$, $1\le q<\infty$, $p\ne q$.
For any finite measure space $(\Omega, \Sigma, \mu)$ which is not purely atomic,
we have
 $L_{p,q}(\Omega)\approx L_{p,q}(0,1)$.
 \end{proposition}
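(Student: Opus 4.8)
The strategy is to peel off the atomic part of $\Omega$ as an abstract summand, to plant inside $L_{p,q}(0,1)$ a complemented ``tower'' consisting of countably many disjoint scaled copies of that summand, and to observe that one further copy is absorbed by the tower.

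Write $\Omega=\Omega_{0}\sqcup\Omega_{\mathrm{at}}$, where $\Omega_{0}$ is the atomless part and $\Omega_{\mathrm{at}}$ is the (at most countable) union of the atoms $\{A_{n}\}$. Since $\mu$ is finite and $\Omega$ is not purely atomic, $a:=\mu(\Omega_{0})\in(0,\infty)$, and (using that $L_{p,q}(\Omega)$ is separable) $\Omega_{0}$ is measure-isomorphic to $(0,a)$ by \cite[Theorem~9.3.4]{Bogachev}; the dilation $g\mapsto a^{1/p}g(a\,\cdot)$ is an isometric isomorphism of $L_{p,q}(0,a)$ onto $L_{p,q}(0,1)$, so $L_{p,q}(\Omega_{0})\approx L_{p,q}(0,1)$. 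Put $X:=L_{p,q}(\Omega_{\mathrm{at}})=\ell_{p,q}(\{A_{n}\})$; then $\sum_{n}\mu(A_{n})=\mu(\Omega_{\mathrm{at}})<\infty$, so $X$ is finite-dimensional or of type $\ell_{p,q,0}(F)$ (we may assume $X\ne\{0\}$, the case $X=\{0\}$ being trivial). Multiplication by $\chi_{\Omega_{0}}$ and by $\chi_{\Omega_{\mathrm{at}}}$ are complementary bounded projections on $L_{p,q}(\Omega)$, and the triangle inequality gives a bounded inverse for the resulting decomposition, whence $L_{p,q}(\Omega)\approx L_{p,q}(\Omega_{0})\oplus X\approx L_{p,q}(0,1)\oplus X$. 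It therefore suffices to prove $L_{p,q}(0,1)\oplus X\approx L_{p,q}(0,1)$.

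To build the tower, set $S:=\sum_{n}\mu(A_{n})<\infty$ and identify $L_{p,q}(0,1)$ with $L_{p,q}(0,S)$ via the dilation above. Choose pairwise disjoint subintervals $C_{n,k}\subset(0,S)$ ($n\ge1$, $k\ge1$) with $\mu(C_{n,k})=2^{-k}\mu(A_{n})$; by Remark~\ref{remark}, for each fixed $k$ the unconditional basic sequence $\{\chi_{C_{n,k}}\}_{n}$ is equivalent to the natural basis of $X$. Let $W:=\overline{\operatorname{span}}\{\chi_{C_{n,k}}:n,k\ge1\}$. By the discussion of averaging operators in Section~\ref{prel}, the averaging operator over the $\sigma$-algebra generated by the $C_{n,k}$ is a bounded projection of $L_{p,q}(0,S)$ onto $W$, so $L_{p,q}(0,1)\approx W\oplus V$ for some Banach space $V$. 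Moreover $W\cong\ell_{p,q}(\{C_{n,k}\}_{k\ge1})$ and $W\oplus X\cong\ell_{p,q}\bigl(\{C_{n,k}\}_{k\ge1}\sqcup\{A_{n}\}\bigr)$. Relabelling $A_{n}=:C_{n,0}$ (with $\mu(C_{n,0}):=\mu(A_{n})$), the atom families of $W\oplus X$ and of $W$ are $\{C_{n,k}\}_{k\ge0}$ and $\{C_{n,k}\}_{k\ge1}$, and the bijection $(n,k)\mapsto(n,k+1)$ matches them with all measure ratios lying in $[1,2]$. Invoking Remark~\ref{remark} once more (unconditionality permits the relabelling) gives $W\oplus X\approx W$.

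Putting the pieces together,
\begin{equation*}
L_{p,q}(0,1)\oplus X\approx(W\oplus V)\oplus X\approx(W\oplus X)\oplus V\approx W\oplus V\approx L_{p,q}(0,1),
\end{equation*}
which is the assertion. The only non-formal step is the absorption $W\oplus X\approx W$, and what makes it go through uniformly for every $X$ of type $\ell_{p,q,0}(F)$ --- so that no case split according to the dichotomy of Theorem~\ref{4.19} is needed --- is the choice of a geometric tower, for which adjoining one extra layer amounts to reindexing the atoms by a map with uniformly bounded measure distortion, to which Remark~\ref{remark} applies. Note that, pleasantly, neither primariness of $L_{p,q}(0,1)$ nor the Pe{\l}czy\'{n}ski decomposition method enters the argument.
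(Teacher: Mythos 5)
Your proof is correct, but it follows a genuinely different route from the paper's. The paper reduces everything to two heavy inputs: the primariness of $L_{p,q}(0,1)$ (imported from the literature) and Theorem \ref{ifonlyif2}. Concretely, it writes $L_{p,q}(0,1)\approx X\oplus L_{p,q}(\Omega_2)$ via an averaging projection, notes that $L_{p,q}(\Omega_2)\not\approx L_{p,q}(0,1)$ because $L_{p,q}(0,1)$ does not embed into any atomic $L_{p,q}$-space (Theorem \ref{ifonlyif2}), and concludes from primariness that the complement $X$ must be isomorphic to $L_{p,q}(0,1)$, whence $L_{p,q}(\Omega)\approx L_{p,q}(\Omega_1)\oplus L_{p,q}(\Omega_2)\approx X\oplus L_{p,q}(\Omega_2)\approx L_{p,q}(0,1)$. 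You instead carry out an explicit infinite swindle: planting the geometric tower $\{C_{n,k}\}$ with $\mu(C_{n,k})=2^{-k}\mu(A_n)$ inside $(0,S)$, projecting onto it by the averaging operator of Section \ref{prel}, and absorbing one extra layer by the shift $(n,k)\mapsto(n,k+1)$, which has measure distortion exactly $2$ and so falls under Remark \ref{remark} (the relabelling is harmless since the $\ell_{p,q}(\{A_n\})$-norm depends only on the multiset of pairs $(a_n,\mu(A_n))$). Each step of yours is sound: the tower tiles $(0,S)$ exactly, the averaging operator is bounded on $L_{p,q}$ for the stated range of $p,q$, and disjointly supported direct sums in $L_{p,q}$ are equivalent to the abstract direct sum. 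What your argument buys is self-containedness --- it needs neither the deep primariness theorem for $L_{p,q}(0,1)$ nor Theorem \ref{ifonlyif2}, and it handles all spaces of type $\ell_{p,q,0}(F)$ (and the finite-dimensional case) uniformly with no dichotomy; what the paper's argument buys is brevity, given that both ingredients are already on the table at that point in the text.
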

 \begin{proof}
Letting  $\Omega =\Omega_1\oplus \Omega_2$ be the decomposition such that $\Omega_1$ is atomless and $\Omega_2$ is  atomic,
we have
$L_{p,q}(\Omega_2)\xhookrightarrow{c} L_{p,q}(0,1)$.
Hence,
there exists a Banach space $X$ isomorphic to $L_{p,q}(0,1)$ such that $X\oplus L_{p,q}(\Omega_2) \approx L_{p,q}(0,1)$.
Since $L_{p,q}(0,1)$ is a primary Banach space and $L_{p,q}(\Omega_2) \not \approx L_{p,q}(0,1)$ (see Theorem \ref{ifonlyif2}), it follows that $X\approx L_{p,q}(0,1)$.
Hence,
$$L_{p,q}(\Omega_1)\approx L_{p,q}(0,1)\approx X .$$
We obtain that $L_{p,q}(\Omega )\approx L_{p,q}(\Omega_1) \oplus L_{p,q}(\Omega_2 ) \approx X \oplus L_{p,q}(\Omega_2 ) \approx L_{p,q}(0,1)$.
\end{proof}

It is known that $\ell_{p,q}$ is not isomorphic to  a subspace of $L_{p,q}(0,1)$\cite{KS,SS2018}.
We strengthen  this result by showing   that $\ell_{p,q}$ can not be embedded into $\ell_{p,q,\infty}\oplus L_{p,q}(0,1)$.
\begin{proposition}\label{lpqtoinfty}Let $1<p<\infty$, $1\le p<\infty$ and $p\ne q$.
Then,
$$\ell_{p,q} \not\hookrightarrow \ell_{p,q,\infty} \oplus L_{p,q}(0,1) .$$
In particular, $U_{p,q} \not\hookrightarrow \ell_{p,q,\infty} \oplus L_{p,q}(0,1)$.
\end{proposition}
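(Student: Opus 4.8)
The plan is to argue by contradiction, exploiting the fact that $\ell_{p,q}$ possesses a symmetric (hence unconditional) unit vector basis $\{e_k^{\ell_{p,q}}\}$ that is weakly null but \emph{not} equivalent to the unit vector basis of $\ell_q$ (since $p\neq q$, by \cite[Theorem 4.e.5]{LT1}). Suppose $T:\ell_{p,q}\hookrightarrow \ell_{p,q,\infty}\oplus L_{p,q}(0,1)$ is an isomorphic embedding, and write $P_1,P_2$ for the canonical projections onto $\ell_{p,q,\infty}$ and $L_{p,q}(0,1)$ respectively. The basic dichotomy I would run on the sequence $\{Te_k^{\ell_{p,q}}\}$ is between the behaviour of its $L_{p,q}(0,1)$-components $\{P_2 Te_k\}$ and its $\ell_{p,q,\infty}$-components $\{P_1 Te_k\}$.

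First I would pass to a subsequence so that $\{P_2 Te_k\}$ is either essentially constant (i.e. $\|P_2 Te_k\|$ stays bounded away from zero and, after a further extraction via the subsequence splitting lemma of \cite{S96} — see also the remarks in the introduction concerning \cite{KS,SS2018} — is equivalent to a disjointly supported sequence in $L_{p,q}(0,1)$) or else $\|P_2 Te_k\|\to 0$. In the first case, a disjointly supported normalized sequence in $L_{p,q}(0,1)$ has a further subsequence equivalent to the unit vector basis of $\ell_q$ by Lemma~\ref{2.1} (applying \cite[Theorem 5]{Dilworth}); since $T$ is an isomorphism, combining with whatever happens on the $\ell_{p,q,\infty}$ side will be made to force $\{e_k^{\ell_{p,q}}\}$ itself to be equivalent to $\{e_k^{\ell_q}\}$, a contradiction. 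In the second case, i.e.\ $\|P_2 Te_k\|\to 0$, I would absorb the $L_{p,q}(0,1)$-parts into an arbitrarily small perturbation (using \cite[Theorem 1.3.9]{AK}) and reduce to the statement that $\ell_{p,q}\not\hookrightarrow \ell_{p,q,\infty}$, which is exactly Proposition~\ref{noti}.

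To make the first case airtight I would handle the $\ell_{p,q,\infty}$-components $\{P_1 Te_k\}$ in parallel: since $\{e_k^{\ell_{p,q}}\}$ is weakly null, so is $\{P_1 Te_k\}$, and by the Bessaga--Pe\l czy\'nski selection principle \cite[Proposition 1.3.10]{AK} a subsequence is equivalent to a block basis of the unconditional basis $\{\chi_{A_n}/\|\chi_{A_n}\|\}$ of $\ell_{p,q,\infty}$; because $\mu(A_n)\to\infty$ this block basis goes to $0$ in the uniform norm (exactly as in the proof of Proposition~\ref{noti}), so by Lemma~\ref{2.1} a further subsequence of $\{P_1 Te_k\}$ is equivalent to $\{e_k^{\ell_q}\}$ or else tends to $0$ in norm. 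Either a common subsequence makes both components $\ell_q$-like — forcing $\{Te_k\}\sim\{e_k^{\ell_q}\}$ and hence $\{e_k^{\ell_{p,q}}\}\sim\{e_k^{\ell_q}\}$, impossible — or one of the components tends to zero in norm and we are thrown back onto one of the two already-known non-embedding statements ($\ell_{p,q}\not\hookrightarrow L_{p,q}(0,1)$ from \cite{KS,SS2018}, or Proposition~\ref{noti}).

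The main obstacle is the bookkeeping of the \emph{simultaneous} subsequence extraction: I must arrange a single subsequence of $\{e_k^{\ell_{p,q}}\}$ along which both $\{P_1 Te_k\}$ and $\{P_2 Te_k\}$ are each stabilized into one of the two alternatives, and then verify that in the genuinely mixed case (one part $\ell_q$-like, the other part norm-null) the sum is still equivalent to $\{e_k^{\ell_q}\}$ — this uses that a norm-null perturbation of a basic sequence does not change its equivalence class, plus the observation that a direct sum $\{x_k\oplus y_k\}$ with $\{x_k\}\sim\{e_k^{\ell_q}\}$ and $\|y_k\|\to 0$ is again $\sim\{e_k^{\ell_q}\}$. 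Once that is in place the contradiction with $p\neq q$ is immediate. The final sentence ($U_{p,q}\not\hookrightarrow\ell_{p,q,\infty}\oplus L_{p,q}(0,1)$) follows at once, since $\ell_{p,q}\xhookrightarrow{c}U_{p,q}$ by Proposition~\ref{0c}, so an embedding of $U_{p,q}$ would yield one of $\ell_{p,q}$.
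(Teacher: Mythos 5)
Your overall frame (contradiction, projections $P_1,P_2$, dichotomy on the components of $\{Te_k^{\ell_{p,q}}\}$) matches the paper's, and your treatment of the $\ell_{p,q,\infty}$-component and of the norm-null cases is correct. But there is a genuine gap in your analysis of the $L_{p,q}(0,1)$-component. You claim that if $\|P_2Te_k\|$ stays bounded away from zero then, after an extraction via the subsequence splitting lemma, $\{P_2Te_k\}$ is equivalent to a disjointly supported sequence and hence (by Lemma~\ref{2.1}) to $\{e_k^{\ell_q}\}$. That is false: the splitting lemma decomposes a subsequence into an equi-integrable part plus a pairwise disjoint part, and the equi-integrable part can be genuinely present. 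For instance the Rademacher functions are normalized, weakly null, and span $\ell_2$ in $L_{p,q}(0,1)$; for $q\neq 2$ no subsequence of them is equivalent to a disjointly supported sequence, since by \cite[Theorem 5]{Dilworth} any such would have a further subsequence equivalent to $\{e_k^{\ell_q}\}$. Consequently your final trichotomy omits the case where both components are semi-normalized but $\{P_2Te_k\}$ is not $\ell_q$-like, and in that case your argument produces no contradiction. (For $q<p$ one could still finish by comparing $\|a\|_q$ with $\|a\|_{p,q}$ on constant vectors, but for $q>p$ the inequality $\|a\|_{p,q}\gtrsim\|a\|_q$ holds automatically, so the lower bound coming from the $P_1$-side alone is harmless and the missing case is essential.)

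The paper's proof avoids this by never analyzing $P_2Te_k$ at all. Once one knows that either $\|P_1Te_k\|\to 0$ (whence $\{Te_k\}\sim\{P_2Te_k\}$ and $\ell_{p,q}\hookrightarrow L_{p,q}(0,1)$, contradicting \cite{KS,SS2018}) or, after passing to a subsequence, $\{P_1Te_k\}$ is a semi-normalized block basic sequence in $\ell_{p,q,\infty}$ tending to $0$ in the uniform norm and hence equivalent to $\{e_k^{\ell_q}\}$, one simply observes that
$$\overline{{\rm span}}\{Te_k\}\subset \overline{{\rm span}}\{P_1Te_k\}\oplus\overline{{\rm span}}\{P_2Te_k\}\hookrightarrow \ell_q\oplus L_{p,q}(0,1),$$
and then invokes Proposition~\ref{5.5} (which rests on the primarity of $L_{p,q}(0,1)$) to absorb the $\ell_q$ summand: $\ell_q\oplus L_{p,q}(0,1)\approx L_{p,q}(0,1)$. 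This again yields $\ell_{p,q}\hookrightarrow L_{p,q}(0,1)$, a contradiction. The absorption step is the idea your proposal is missing; without it (or some substitute) the mixed case cannot be closed. Your deduction of the final assertion about $U_{p,q}$ from Proposition~\ref{0c} is fine.
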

\begin{proof}
Assume that there exists an isomorphism  $T: \ell_{p,q}  \to \ell_{p,q,\infty} \oplus L_{p,q}(0,1)$.
Let $P_1$ and $P_2$ be the projection of $\ell_{p,q,\infty} \oplus L_{p,q}(0,1)$ onto $\ell_{p,q,\infty} $
and $  L_{p,q}(0,1)$, respectively.

Let $\{e^{\ell_{p,q} }_n\}$ be the natural basis of $\ell_{p,q}$.
If $P_1 T (e_{n}^{\ell_{p,q}})\to 0$ in $\norm{\cdot}_{p,q}$.
Then, passing to a subsequence, we may assume that $ \{P_2T (e_{n}^{\ell_{p,q}})\} \sim \{e_n^{\ell_{p,q}}\}$. This implies that $\ell_{p,q}\hookrightarrow L_{p,q}(0,1)$, which is a contradiction \cite{KS,SS2018}.

Passing to a subsequence if necessary, we may assume that  $\norm{P_1 T (e_{n}^{\ell_{p,q}})}_{p,q} \ge \delta >0$ for all $n$.
Passing to a subsequence if necessary, we may assume that $\{P_1 T (e_{n}^{\ell_{p,q}})\}$ is equivalent to a block basic sequence  in $\ell_{p,q,\infty}$.
By the definition of $\ell_{p,q,\infty}$, we obtain that   $P_1 T (e_{n}^{\ell_{p,q}})\to 0$ in the uniform norm.
By Lemma \ref{2.1}, passing to a subsequence if necessary, we obtain that $ \{P_1T (e_{n}^{\ell_{p,q}})\} \sim \{e_n^{\ell_{ q}}\}$.
Hence, $$\overline{{\rm span}}^{\norm{\cdot}_{\ell_{p,q}}} \{e_n^{\ell_{p,q}} \}\hookrightarrow \overline{{\rm span}}^{\norm{\cdot}_{\ell_{p,q,\infty }}} \{P_1 T(e_n^{\ell_{p,q}} )\} \oplus \overline{{\rm span}}^{\norm{\cdot}_{L_{p,q}}} \{P_2 T( e_n^{\ell_{p,q}}  )\} \hookrightarrow   \ell_q\oplus L_{p,q}(0,1) . $$
By Proposition \ref{5.5} above, we obtain that $L_{p,q}(0,1)\oplus \ell_q \approx L_{p,q}(0,1)$.
Therefore, $\ell_{p,q}\hookrightarrow L_{p,q}(0,1)$,  which is a contradiction \cite{KS,SS2018}.
\end{proof}

We prove the
  following far-reaching generalization of  \cite[Theorem 11]{KS} and \cite[Theorem 10]{SS2018} by using  different techniques from that in \cite{KS,SS2018}.
  In particular,
   the techniques used in \cite{KS,SS2018}  rely on the values of $p$ and $q$,
   our approach used in Theorem \ref{5.10} below is much simpler and does not rely on the values of $p$ and $q$.
\begin{theorem}\label{5.10}Let $1<p<\infty$, $1\le p<\infty$ and $p\ne q$.
For any $\sigma$-finite atomic measure space $(\Omega,\Sigma, \mu)$ of type  $\ell_{p,q,1}$ or $\ell_{p,q, \infty}$,
$$U_{p,q} \not\hookrightarrow L_{p,q}(0,1)\oplus \ell_{p,q}(\Omega).$$
In particular, $L_{p,q}(0,\infty ) \not\hookrightarrow L_{p,q}(0,1)\oplus \ell_{p,q}(\Omega).$
\end{theorem}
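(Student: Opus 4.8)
The plan is to reduce to the already-established negative embedding results by exploiting the fact that $U_{p,q}$ "absorbs" spaces of the types $\ell_{p,q,1}$, $\ell_{p,q,0}(F)$, $\ell_{p,q,0}(I)$ and $\ell_{p,q,\infty}$ (Proposition \ref{0c}), together with the primariness of $L_{p,q}(0,1)$ (used via Proposition \ref{5.5}) and the Pe\l{}czy\'nski decomposition method. Suppose, for contradiction, that $U_{p,q}\hookrightarrow L_{p,q}(0,1)\oplus \ell_{p,q}(\Omega)$ where $\ell_{p,q}(\Omega)$ is of type $\ell_{p,q,1}$ or $\ell_{p,q,\infty}$. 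In the first case $\ell_{p,q}(\Omega)\approx\ell_{p,q}$ (Remark \ref{remark}), and in the second case, by Proposition \ref{lpqabsorb}, $\ell_{p,q}(\Omega)\oplus\ell_{p,q}\approx\ell_{p,q}$; in either case it suffices to treat the single target space $L_{p,q}(0,1)\oplus\ell_{p,q}$, since $U_{p,q}\hookrightarrow L_{p,q}(0,1)\oplus\ell_{p,q}(\Omega)$ would give $U_{p,q}\hookrightarrow L_{p,q}(0,1)\oplus\ell_{p,q}$ after absorbing an extra $\ell_{p,q}$ summand (recall $U_{p,q}\approx U_{p,q}\oplus\ell_{p,q}$ by Proposition \ref{0c}).

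So the heart of the matter is to show $U_{p,q}\not\hookrightarrow L_{p,q}(0,1)\oplus\ell_{p,q}$. First I would dispose of the case $p=q=2$ separately — but this is excluded since $p\ne q$ — so throughout $p\ne q$. Write $T:U_{p,q}\to L_{p,q}(0,1)\oplus\ell_{p,q}$ for a hypothetical isomorphic embedding, and let $P_1,P_2$ be the coordinate projections onto $L_{p,q}(0,1)$ and $\ell_{p,q}$ respectively. Using the internal structure of $U_{p,q}$ from the proof of Proposition \ref{4.8}: $U_{p,q}=\ell_{p,q}(\{A_n\})$ where $\{A_n\}=\bigcup_n B_n$, $B_n=\{A^n_k\}_{k\ge1}$ consisting of infinitely many atoms of measure $1/n$. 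For each fixed $n$, the normalized characteristic functions $\{b_n\chi_{A^n_k}\}_{k\ge1}$ form a weakly null sequence; by the Bessaga--Pe\l{}czy\'nski selection principle, after passing to a subsequence, $T$ of this sequence is (up to small perturbation) a disjointly supported sequence in $L_{p,q}(0,1)\oplus\ell_{p,q}$. By Lemma \ref{2.1} (applied to disjoint sequences going to $0$ in measure in each coordinate) a further subsequence is equivalent to the unit vector basis of $\ell_q$. But $\norm{\sum_{j=1}^n b_n\chi_{A^n_{k_j}}}_{p,q}=b_n$ for ANY $n$ of the chosen atoms, whereas the $\ell_q$-norm of $n$ unit vectors is $n^{1/q}$ — and $b_n$ stays fixed while $n$ varies. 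Iterating the argument of Proposition \ref{4.8}: build a diagonal sequence across the blocks $B_n$, obtain on one hand $\norm{T(\chi_{(0,j)})}_{p,q}\sim j^{1/p}$ (from the norm structure of $U_{p,q}$, where $\chi_{(0,j)}$ corresponds to a block of $j$ atoms of total measure comparable to a fixed scale), and on the other hand $\sim j^{1/q}$ (from the $\ell_q$-equivalence obtained via Lemma \ref{2.1} after the $L_{p,q}(0,1)$-coordinate is handled). The key new point over Proposition \ref{4.8} is controlling the $P_2T$ (the $\ell_{p,q}$-part): here one uses that disjointly supported normalized vectors in $\ell_{p,q}$ also admit subsequences equivalent to $\ell_q$ — since $\ell_{p,q}\hookrightarrow L_{p,q}(0,\infty)$ and any such disjoint sequence has rearrangements tending to $0$ pointwise when the supports have bounded measure, or one directly invokes Lemma \ref{2.1}.

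The main obstacle I anticipate is the $L_{p,q}(0,1)$-coordinate: a disjointly supported normalized sequence in $L_{p,q}(0,1)$ need not have decreasing rearrangements tending to $0$ pointwise (they all live on $(0,1)$), so Lemma \ref{2.1} does not apply verbatim. I would handle this by the standard splitting: for the diagonal vectors $y_j=P_1T(\text{block}_j)$, either $\norm{y_j}_{p,q}\to0$ along a subsequence — in which case the $\ell_{p,q}$-coordinate alone carries an isomorphic copy of a block of $U_{p,q}$, and we can quote $U_{p,q}\not\hookrightarrow\ell_{p,q}$ (Proposition \ref{4.8}) directly — or $\norm{y_j}_{p,q}$ stays bounded below, in which case (using that they are disjointly supported in $L_{p,q}(0,1)$, hence by \cite[Theorem 5]{Dilworth} a subsequence is $\ell_q$-equivalent) again the whole block becomes $\ell_q$-equivalent, producing the contradiction $j^{1/p}\sim j^{1/q}$. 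Combining the two cases with the absorption and primariness reductions above yields the theorem; the final sentence $L_{p,q}(0,\infty)\not\hookrightarrow L_{p,q}(0,1)\oplus\ell_{p,q}(\Omega)$ follows since $U_{p,q}\xhookrightarrow{c}L_{p,q}(0,\infty)$ (Proposition \ref{0c} and the discussion in Section \ref{prel}), so an embedding of $L_{p,q}(0,\infty)$ would restrict to one of $U_{p,q}$.
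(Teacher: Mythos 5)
Your reductions are sound: handling the type $\ell_{p,q,1}$ case via $\ell_{p,q}(\Omega)\approx\ell_{p,q}$ and the type $\ell_{p,q,\infty}$ case by absorbing an extra $\ell_{p,q}$ summand (Proposition \ref{lpqabsorb} together with $U_{p,q}\oplus\ell_{p,q}\approx U_{p,q}$) is a legitimate alternative to the paper's use of Proposition \ref{lpqtoinfty}, and your deduction of the final sentence from $U_{p,q}\xhookrightarrow{c}L_{p,q}(0,\infty)$ matches the paper. The core of the argument, however, has two genuine gaps, both located in your dichotomy on the $L_{p,q}(0,1)$-coordinate of the diagonal block vectors.

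First, in the case $\norm{P_1T(\mathrm{block}_j)}_{p,q}\to 0$: the diagonal vectors $\mathrm{block}_j$ span only an isometric copy of $\ell_{p,q}$ inside $U_{p,q}$ (each has decreasing rearrangement $\chi_{(0,1)}$), so all you obtain is $\ell_{p,q}\hookrightarrow\ell_{p,q}$ --- no contradiction, and Proposition \ref{4.8} cannot be quoted. To invoke $U_{p,q}\not\hookrightarrow\ell_{p,q}$ you would need $P_1T$ to be small on the individual atoms $e^n_j$ for infinitely many $j$ in each block $B^n$, which does not follow from smallness on the block sums (there can be cancellation). Second, in the complementary case you assert that the $P_1T(\mathrm{block}_j)$, being bounded below in norm, are ``disjointly supported in $L_{p,q}(0,1)$'' and hence $\ell_q$-equivalent by Dilworth's theorem. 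This is unjustified: images under $P_1T$ of disjoint vectors need not be disjoint, nor equivalent to disjoint vectors, in $L_{p,q}(0,1)$ --- they could for instance be Rademacher-like and span $\ell_2$ --- so the claimed contradiction $j^{1/p}\sim j^{1/q}$ is not reached. The paper avoids analyzing the $P_1$-part altogether: both of its dichotomies are on the $P_2$-coordinate (first on the block sums, then on the individual atoms), and in every branch the contradiction is that $\ell_{p,q}$ or $U_{p,q}$ would embed into $L_{p,q}(0,1)$ (after absorbing an $\ell_q$ summand via Proposition \ref{5.5}), which is ruled out by \cite{KS,SS2018}. In particular the atom-level dichotomy --- if $\liminf_j\norm{P_2T(e^n_j)}_{p,q}=0$ for every $n$ then $U_{p,q}\hookrightarrow L_{p,q}(0,1)$; otherwise the disjoint, uniformly bounded perturbations $g^n_j$ force the normalized $P_2T(\mathrm{block}_n)$ to vanish in the uniform norm and hence to be $\ell_q$-equivalent by Lemma \ref{2.1} --- is the ingredient your proposal is missing.
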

\begin{proof}
By Proposition \ref{lpqtoinfty}, it suffices to prove that
$U_{p,q} \not\hto L_{p,q}(0,1)\oplus \ell_{p,q}$.
Without loss of generality, we assume that   $U_{p,q} =\ell_{p,q}(\{A_n\})$, where
$\{A_n\}=\cup_n B^n:=\cup \{B^n_j\}_{j\ge 1}$ and $B^ n$ consists of  infinitely many atoms with measure $\frac{1}{n}$ for each $n$.
Define $e_{j}^n$ be the normalized basis of the characterized function $\chi_{B_j^n}$.

Assume that
there exists an isomorphism embedding
$$T:U_{p,q}  \to  L_{p,q}(0,1)\oplus \ell_{p,q} .$$
Let $P_1$ and $P_2$ be  projections  from $L_{p,q}(0,1)\oplus \ell_{p,q} $ onto $L_{p,q}(0,1)$ and $ \ell_{p,q}$, respectively.

We claim that for any $1\le n_1 <n_2 < \cdots <n_n$, $n\ge 1$,
\begin{align}\label{P2TENJ}
\norm{P_2T \left(  \norm{\chi_{(0,\frac1n)}}_{p,q}  \sum_{1\le j\le n}e_{n_j}^n  \right)}_{p,q}\not \to 0
\end{align}
as $n\to \infty$.
Indeed, otherwise, passing to a subsequence of $\{n\}$ if necessary, we have
$$ \{ e_n^{\ell_{p,q}} \}
\sim
\left\{ \norm{\chi_{(0,\frac1n)}}_{p,q}  \sum_{1\le j \le n } e_{n_j}^n    \right\}
\sim
\left \{  P_1T  \left(
 \norm{\chi_{(0,\frac1n)}  \sum_{1\le j \le n } e_{n_j}^n  } \right)
  \right\} \hookrightarrow  L_{p,q}(0,1), $$
which is a contradiction to the fact that $\ell_{p,q}\not \hookrightarrow L_{p,q}(0,1)$\cite{KS,SS2018}.

If, for each $n $, $$ \liminf_{j\to \infty} \norm{ P_2T(e^n _j)}_{p,q }\to 0, $$
then,   passing to a subsequence if necessary, we obtain that\cite[Theorem 1.3.9]{AK}
$$   \{e_{j}^n\}_{n,j} \sim  \{P_1T(e_{j}^n)\}_{n,j}  .  $$
The structure of the elements $\{e_{n_j}^i  \}$ is shown in the following matrix:
\begin{equation*}
\left(
\begin{matrix}
e_{n_1}^1 & \cdots &  e_{n_2}^1   &  \cdots  &  e_{n_4}^1  & \cdots&\cdots  & e_{n_7}^1 &\cdots&\cdots&  \cdots&   e_{n_{11}}^1&  \cdots&  \cdots\\
0  & \cdots & \cdots  &  e_{n_3}^2   & \cdots &  e_{n_5}^2&\cdots &\cdots  & e_{n_8}^2&\cdots&  \cdots &  \cdots&  e_{n_{12}}^2&  \cdots\\
0 & \cdots & \cdots    & \cdots & \cdots &  \cdots &  e_{n_6}^3 & \cdots & \cdots   & e_{n_9}^3  &  \cdots&  \cdots&  \cdots
\\
0 & \cdots & \cdots    & \cdots & \cdots &  \cdots &  \cdots & \cdots & \cdots   &  \cdots &  e_{n_{10}}^4&  \cdots &  \cdots
\\
0 & \cdots & \cdots    & \cdots & \cdots &  \cdots &  \cdots & \cdots & \cdots   &  \cdots &   \cdots &  \cdots&  \cdots
\end{matrix}\right).
\end{equation*}
This implies that $U_{p,q} \hookrightarrow L_{p,q}(0,1)$, which is impossible \cite{SS2018,KS}.

Passing to a subsequence if necessary, we assume that
for each $n$,   there exists $\delta_n>0$ such that
$$ \liminf_{j\to \infty} \norm{ P_2T(e^n _j)}_{p,q }>  \delta_n. $$
Let \begin{align}\label{fnjp2}
f^n_j := \frac{P_2T (e^n_j)}{\norm{P_2T (e^n_j) }_{p,q}}.
 \end{align}In particular, $\norm{f_j^n}_{p,q}=1$ and $\norm{f_{j}^n}_\infty \le 1 $.
For $f^1_1$ and any $\varepsilon>0$, there exists a finitely supported element $g_1^1:= f_{n_1}^1 s(g_1^1) $ (where $s(g_1^1)$ is the support of $s(g_1^1)$) such that
$\norm{f_{n_1}^1 -g_1}_{p,q}\le \varepsilon_1.$
In particular, $\norm{g_{1}^1}_\infty \le 1 $.

Let $\varepsilon_2>0$.
Since $\{e^2_j\}_j$ is weakly null, it follows that there exist  $f^2_{n_1}$ and  a finitely support element $g_1^2:= f_{n_1}^2 s(g_1^2) $ (where $s(g_1^2)$ is the support of $s(g_1^1)$) such that
 $\norm{f^2_{n_1} -g^2_1}_{p,q}\le \varepsilon_2 $
and $g_1^1 g_1^2=0$.

Let $\varepsilon_3>0$.
Since $\{e^2_j\}_j$ is weakly null, it follows that there exist  $f^2_{n_2}$ and   a finitely support element $g_2^2:= f_{n_2}^2 s(g_2^2) $ (where $s(g_2^2)$ is the support of $s(g_2^2)$) such that
 $\norm{f^2_{n_2} -g^2_2}_{p,q}\le \varepsilon_3 $
and $g_1^1 g_2^2=0=g_1^2g_2^2 $.

Constructive inductively, we obtain a sequence $\{g^n_j\}_{1\le j\le n,n\ge 1 }$ equivalent to $\{f^n_{n_j}\}_{1\le j\le n,n\ge 1}$\cite[Theorem 1.3.9]{AK}.
The structure of the elements $\{g_{ j}^n  \} _{1\le j\le n,n\ge 1 } $ is shown in the following matrix:
\begin{equation*}
\left(
\begin{matrix}
g_{ 1}^1 &    \cdots   &  \cdots  &  \cdots & \cdots&\cdots  & \cdots &\cdots&\cdots&  \cdots&   \cdots&  \cdots&  \cdots&\cdots   \\
\cdots  & \cdots   &  g_{ 1}^2   & \cdots &  g_{2}^2&\cdots &\cdots  & \cdots &\cdots&  \cdots &  \cdots&  \cdots &  \cdots& \cdots   \\
\cdots & \cdots      & \cdots & \cdots &  \cdots &  \cdots &  g_{1}^3 & \cdots & g_{ 2}^3 & \cdots  & g_{ 3}^3 & \cdots  & \cdots  &\cdots
\\
\cdots & \cdots     & \cdots & \cdots &  \cdots &  \cdots & \cdots & \cdots &  \cdots  &  \cdots &\cdots  &\cdots  &  g_{1}^4   &\cdots
\\
\cdots  & \cdots     & \cdots & \cdots &  \cdots &  \cdots & \cdots & \cdots &  \cdots  &  \cdots &   \cdots & \cdots  & \cdots  &\cdots
\end{matrix}\right).
\end{equation*}
Since $g_{j}^n$ are disjointly supported and $\norm{g_{j}^n}_\infty \le 1$, it follows
that
\begin{align*}
\norm{
\frac{
    \sum_{1\le j \le n}  \norm{P_2T  (e_{n_j}^n) }_{p,q} g_j^n  }
    {
    \norm{
   \sum_{1\le j \le n}  \norm{P_2T  (e_{n_j}^n) }_{p,q} g_j^n }_{p,q}
    }
 }_\infty
&\sim
\norm{
\frac{
    \sum_{1\le j \le n}  \norm{P_2T  (e_{n_j}^n) }_{p,q} g_j^n  }
    {
    \norm{
   \sum_{1\le j \le n}  \norm{P_2T  (e_{n_j}^n) }_{p,q} f_{n_j}^n }_{p,q}
    }
 }_\infty\\
 &\stackrel{\eqref{fnjp2}}{=}
 \norm{
\frac{
    \sum_{1\le j \le n}  \norm{P_2T  (e_{n_j}^n) }_{p,q} g_j^n  }
    {
    \norm{
   \sum_{1\le j \le n}   P_2T  (e_{n_j}^n)  }_{p,q}
    }
 }_\infty
 \stackrel{\eqref{P2TENJ}}{\lesssim} \frac{ \norm{P_2T}}{ \frac{\delta}{ \norm{\chi_{(0,\frac1n)}}_{p,q}}} \to 0
\end{align*}
as $n \to \infty$.

By Lemma \ref{2.1}, there exists a subsequence $\{n_k\}$ of $\{n\}$  such that   $\left\{\frac{
    \sum_{1\le j \le {n_k}}  \norm{P_2T  (e_{n_j}^{n_k}) }_{p,q} g_j^{n_k}  }
    {
    \norm{
   \sum_{1\le j \le {n_k}}  \norm{P_2T  (e_{n_j}^{n_k}) }_{p,q} g_j^{n_k} }_{p,q}
    } \right\} \sim\{e_{k}^{\ell_q}\}
$.
Note that
\begin{align*}
&\qquad \overline{{\rm span}}^{\norm{\cdot}_{\ell_{p,q}}}  \left\{e_{n}^{p,q}\right\}_n
 \approx   \overline{{\rm span}}^{\norm{\cdot}_{\ell_{p,q}}}  \left\{ \sum_{1\le j \le {n_k }} e_{j}^{n_k }\right\}_k
\approx   \overline{{\rm span}}^{\norm{\cdot}_{L_{p,q} \oplus \ell_{p,q}}} \left \{T(\sum_{1\le j \le {n_k }} e_{j}^{n_k })\right\}_k  \\
&\hookrightarrow   \left(\overline{{\rm span}}^{\norm{\cdot}_{L_{p,q}}}  \left\{ P_1 T (\sum_{1\le j \le {n_k }  } e_{j}^{n_k }  )\right\}_k\right)  \bigoplus    \left(\overline{{\rm span}}^{\norm{\cdot}_{\ell_{p,q}}}  \left\{P_2 T(\sum_{1\le j \le {n_k } } e_{j}^{n_k }  )\right\}_k\right)\\
&\hookrightarrow
L_{p,q}(0,1)\oplus \overline{{\rm span}}^{\norm{\cdot}_{\ell_{p,q}}}
  \left\{
 \sum_{1\le j \le {n_k }}  \norm{P_2T  (e_{n_j}^{n_k}) }_{p,q}  f_{n_j}^{n_k }
 \right\}_k  \\
&
\approx L_{p,q}(0,1)\oplus \overline{{\rm span}}^{\norm{\cdot}_{\ell_{p,q}}}  \left\{\frac{\sum_{1\le j \le n_k }   \norm{P_2T  (e_{n_j}^{n_k}) }_{p,q}   g_j^{n_k }}{\norm{\sum_{1\le j \le {n_k }}  \norm{P_2T  (e_{n_j}^{n_k}) }_{p,q}   g_j^{n_k }}}\right\}_k  \\
& \hookrightarrow  L_{p,q}(0,1)\oplus \ell_q \stackrel{\tiny \mbox{Prop. \ref{5.5}}}{\approx} L_{p,q}(0,1).
\end{align*}
This is a contradiction with the fact that $\ell_{p,q}\not\hookrightarrow L_{p,q}(0,1)$\cite{KS,SS2018}.
\end{proof}

The following corollary yields the results  of  \cite[Theorem 11]{KS} and \cite[Theorem 10]{SS2018} in a more general setting and  in much simpler fashion.
\begin{corollary}\label{infinityinto}Let $1<p<\infty$, $1\le p<\infty$ and $p\ne q$.
For any $\sigma$-finite atomic measure space $(\Omega,\Sigma, \mu)$, we have
$$L_{p,q}(0,\infty)\not\hookrightarrow L_{p,q}(0,1)\oplus \ell_{p,q}(\Omega).$$
In particular,
for any atomless finite  measure space  $(\Omega_1,\Sigma_1, \mu_1)$ and atomic  infinite $\sigma$-finite  measure space  $(\Omega_2,\Sigma_2, \mu_2)$,
 we have
 $$L_{p,q}(\Omega_1\oplus \Omega_2)\not\approx L_{p,q}(0,\infty)$$
\end{corollary}
\begin{proof} By Proposition \ref{0c}, it suffices to prove that $$L_{p,q}(0,\infty)\not\hookrightarrow L_{p,q}(0,1)\oplus U_{p,q}.$$
Assume by contradiction that there exists an isomorphic embedding
 $$T: L_{p,q}(0,\infty)  \hookrightarrow L_{p,q}(0,1)\oplus U_{p,q}.$$

For every $n,k\in \mathbb{N}$, $t\in (0,\infty)$, we define
$$
r_{n,k}(t):=
\begin{cases}r_n(t-k+1), &t\in (k-1,k];\\
0, &\mbox{elsewhere},
\end{cases}
$$
 where $r_n$, $n\in \mathbb{N}$, are the Rademacher functions.
 By \cite[Theorem 5]{KS}, $\{r_{n,k}\}_{k,n=1}^\infty $ is a basic sequence, and hence,
$\{T(r_{n,k})\}_{k,n=1}^\infty $
is also a basic sequence with a  basic constant, which does not exceed $\left\|T\right\|$.
Let $P_1 $ and $P_2$ be projections from $L_{p,q}(0,1)\oplus U_{p,q}$
onto $L_{p,q}(0,1)\oplus 0$ and $0\oplus U_{p,q}$, respectively.

Consider the sequence $\{P_2T(r_{n,k})\}_{n=1}^\infty$, $k\in \mathbb{N}$.
Without loss of generality, we may assume that
$$
\inf_n \norm{P_2T(r_{n,k})}_{p,q} =0,~\forall k\in \mathbb{N},$$
or else
$$
\inf_n \norm{P_2T(r_{n,k})}_{p,q}
>0,~\forall k\in \mathbb{N}.$$

If $
\inf_n \norm{P_2T(r_{n,k})}_{p,q} =0,~\forall k\in \mathbb{N},$
it is immediate that there exists a sequence $\{n_k\}_{k=1}^\infty$ of integers such that
$$\{T(r_{n_k, k})\}_{k=1}^\infty \sim
\{P_1T(r_{n_k, k})\}_{k=1}^\infty.$$
Hence,
$$\{e^{\ell_{p,q}}_k\}_{k=1}^\infty
\sim
\{r_{n_k, k}\}_{k=1}^\infty \sim
\{T(r_{n_k, k})\}_{k=1}^\infty \sim
\{P_1T(r_{n_k, k})\}_{k=1}^\infty. $$
¡¡In particular, $\ell_{p,q}\hookrightarrow L_{p,q}(0,1)$,
which contradicts with that fact that $\ell_{p,q}\not\hookrightarrow L_{p,q}(0,1)$ (see \cite{KS,SS2018}).

Now suppose that $
\inf_n \norm{P_2T(r_{n,k})}_{p,q}
>0,~\forall k\in \mathbb{N}.$
Without loss of generality, we may assume that
$$\inf_k \left(
\inf_n \norm{P_2T(r_{n,k})}_{p,q}\right)=0,$$
or else
$$\inf_k \left(
\inf_n \norm{P_2T(r_{n,k})}_{p,q}\right)>0.$$

If $\inf_k \left(
\inf_n \norm{P_2T(r_{n,k})}_{p,q}\right)=0,$
then it is immediate  that
 then there exist two
sequences $\{n_k\}_{k=1}^\infty$ and $\{j_k\}_{k=1}^\infty $
 of integers such that
$$\{e^{\ell_{p,q}}_k\}_{k=1}^\infty
\sim
\{r_{n_k,j_k}\}_{k=1}^\infty \sim
\{T(r_{n_k,j_k})\}_{k=1}^\infty \sim
\{P_1T(r_{n_k,j_k})\}_{k=1}^\infty. $$
¡¡In particular, $\ell_{p,q}\hookrightarrow L_{p,q}(0,1)$,
which contradicts with that fact that $\ell_{p,q}\not\hookrightarrow L_{p,q}(0,1)$ (see \cite{KS,SS2018}).

Now, suppose that
$\inf_k \left(
\inf_n \norm{P_2T(r_{n,k})}_{p,q}\right)>0 $.
Since the sequence
$\{P_2T(r_{n,k})\}_{n=1}^\infty $ is weakly null\cite[Proposition 2.c.10]{LT2}, without loss of generality, by \cite[Proposition 1.a.12]{LT1}, we may
assume that
for any fixed $k\in \mathbb{N}$, the sequence $\{P_2T(r_{n,k})\}_{n=1}^\infty $
is equivalent to a sequence of disjointly supported
elements in $U_{p,q}$.
By \cite[Lemma 2]{KS}, there exist two sequences
$\{a_{k}\}_{k=1}^\infty $
and $\{b_{k}\}_{k=1}^\infty $,
 and a subsequence of $\{r_{n,k}\}_{n=1}^\infty$ (for simplicity, still denote by $\{r_{n,k}\}_{n=1}^\infty$)
such that for
$$F_k:= \frac{\sum_{n=a_k}^{b_k}P_2T(r_{n,k}) }{\norm{\sum_{n=a_k}^{b_k}P_2T(r_{n,k})}_{p,q}} \in U_{p,q}, k\ge 1, $$
the convergence $ F_k^*\to 0$ holds as $k\to \infty $.
By \cite[Lemma 1]{KS}, passing to a subsequence if necessary,
we may assume that
$$\{F_{k}\}_{k=1}^\infty \sim \{e^{\ell_q}_k\}_{k=1}^\infty . $$
On the other hand, by
\cite[Theorem 6]{KS},
for the elements,
$$G_k:= \frac{\sum_{n=a_k}^{b_k}r_{n,k}}{\norm{\sum_{n=a_k}^{b_k}r_{n,k} }_{p,q}}\in [r_{n,k}]_{n=1}^\infty,$$
we have
$$\{G_{k}\}_{k=1}^\infty \sim\{e^{\ell_{p,q} }_k \}^\infty_{k=1}.$$
Denote $\alpha_k:=
\frac{
 \norm{
 \sum_{n=a_k}^{b_k}P_2T(r_{n,k})}_{p,q}
 }{ \norm{\sum_{n=a_k}^{b_k} r_{n,k} }_{p,q
}}$. In particular, $\sup_k \alpha_k \le \norm{P_2T}$.
We have
$$P_2T(G_k) =\alpha F_k, ~k\ge 1. $$
Passing to a subsequence if necessary, we  may assume that $\alpha_k\to 0$ as $k\to \infty $ or else $\inf_k\alpha_k >0$.
If $\alpha_k\to 0$ as $k\to \infty$, then $\norm{P_2T(G_k)}_{p,q}\to 0$ and hence, passing to a subsequence if necessary,
$ \{P_1T(G_k)\}_{j=1}^\infty
\sim
\{T(G_k) \}_{j=1}^\infty$.
This implies that $\ell_{p,q}\hookrightarrow L_{p,q}(0,1)$, which is impossible (see \cite{KS,SS2018}).

Now, if $\inf_{k}\alpha_k >0$, then we have (note that both $\{P_1T(G_k)\}_{k=1}^\infty $
 and $\{P_2T(G_k)\}_{k=1}^\infty$ are basic sequences\cite[Proposition 1.5.4]{AK})
\begin{align*}
[e_k^{\ell_{p,q}}]_{k=1}^\infty
\sim
[G_k]_{k=1}^\infty
& \sim
[P_1T(G_k) +P_2T(G_k)]_{k=1}^\infty
\hookrightarrow  [P_1T(G_k) ]_{k=1}^\infty\oplus  [P_2T(G_k)]_{k=1}^\infty \\ &\approx [P_1T(G_k) ]_{k=1}^\infty\oplus  [F_k]_{k=1}^\infty \approx L_{p,q}(0,1)\oplus \ell_q \stackrel{\tiny Prop. \ref{5.5}}{\approx} L_{p,q}(0,1),
\end{align*}
which is impossible
(see \cite{KS,SS2018}).
\end{proof}

\begin{cor}\label{primary}Let $1<p<\infty$, $1\le p<\infty$ and $p\ne q$.
 Let $(\Omega,\Sigma, \mu)$ be an infinite $\sigma$-finite measure space and assume that   $\Omega$ can be  decomposed  as $\Omega_1\oplus \Omega_2$, where  $\Omega_1$ is atomless with  $\mu(\Omega_1)=\infty$  and $\Omega_2$ is atomic.
 We have
 $$L_{p,q}(\Omega)\approx L_{p,q}(0,\infty).$$
 \end{cor}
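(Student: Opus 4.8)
The plan is to combine the decomposition $L_{p,q}(\Omega)\approx L_{p,q}(\Omega_1)\oplus L_{p,q}(\Omega_2)$ with the facts that $L_{p,q}(\Omega_1)\approx L_{p,q}(0,\infty)$ (all separable infinite atomless measure spaces give the same space) and that $L_{p,q}(\Omega_2)$ is an atomic $L_{p,q}$-space that embeds as a complemented subspace of $L_{p,q}(0,\infty)$ (via averaging operators, as recalled in Section~\ref{prel}). So it suffices to show that $L_{p,q}(0,\infty)$ absorbs any complemented subspace of itself of this particular atomic form, i.e. that
\begin{equation*}
L_{p,q}(0,\infty)\oplus L_{p,q}(\Omega_2)\approx L_{p,q}(0,\infty).
\end{equation*}

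First I would invoke the Pe\l czy\'nski decomposition method \cite[Theorem 2.2.3]{AK}. We have $L_{p,q}(0,\infty)\approx L_{p,q}(0,\infty)\oplus L_{p,q}(0,\infty)$ (split the half-line into two half-lines, or use that $L_{p,q}(0,\infty)$ is isomorphic to $(\oplus_{n} L_{p,q}(0,1))_{?}$-type sums — more cleanly, $L_{p,q}(0,\infty)$ is primary and $L_{p,q}(0,\infty)\approx L_{p,q}(0,\infty)\oplus L_{p,q}(0,\infty)$ is immediate from the measure-space decomposition $(0,\infty)=(0,\infty)\sqcup(0,\infty)$). Next, $L_{p,q}(\Omega_2)\xhookrightarrow{c} L_{p,q}(0,\infty)$ by the averaging-operator discussion in Section~\ref{prel}, so $L_{p,q}(0,\infty)\approx L_{p,q}(\Omega_2)\oplus Z$ for some Banach space $Z$. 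Then
\begin{equation*}
L_{p,q}(0,\infty)\oplus L_{p,q}(\Omega_2)\approx L_{p,q}(\Omega_2)\oplus Z\oplus L_{p,q}(\Omega_2).
\end{equation*}
If $L_{p,q}(\Omega_2)\approx L_{p,q}(\Omega_2)\oplus L_{p,q}(\Omega_2)$, this equals $L_{p,q}(\Omega_2)\oplus Z\approx L_{p,q}(0,\infty)$, and we are done. Now $L_{p,q}(\Omega_2)\approx L_{p,q}(\Omega_2)\oplus L_{p,q}(\Omega_2)$ holds because $\Omega_2$ as an atomic measure space can be split into two atomic pieces $\Omega_2', \Omega_2''$ with $\{\mu(A_n)\}$ (restricted to each) equivalent as sequences to $\{\mu(A_n)\}$ itself (split the atom-index set $\mathbb{N}$ into evens and odds, using Remark~\ref{remark} if the resulting subsequences need to be rescaled). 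Actually this step requires a tiny bit of care — if $\Omega_2$ is a single atom it is finite-dimensional — but in the relevant application $L_{p,q}(\Omega_2)$ is either finite-dimensional (then the claim is trivial since $L_{p,q}(0,\infty)\oplus \ell_{p,q}^n\approx L_{p,q}(0,\infty)$ as $\ell_{p,q}^n$ is complemented in an $L_{p,q}(0,1)$-block) or infinite-dimensional, where the even/odd split gives $\ell_{p,q}(\{A_{2n}\})\oplus\ell_{p,q}(\{A_{2n-1}\})\approx\ell_{p,q}(\{A_n\})$ directly, and the two summands are each $\approx L_{p,q}(\Omega_2)$ by Remark~\ref{remark} after re-indexing.

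The remaining point is simply to assemble: $L_{p,q}(\Omega)\approx L_{p,q}(\Omega_1)\oplus L_{p,q}(\Omega_2)\approx L_{p,q}(0,\infty)\oplus L_{p,q}(\Omega_2)\approx L_{p,q}(0,\infty)$. I expect the main (though still minor) obstacle to be the bookkeeping in the case analysis on $\Omega_2$: handling uniformly the sub-cases where $\Omega_2$ is empty, a finite atomic space, or an infinite atomic space of any of the types from Section~\ref{atomic}, and making sure the identification $L_{p,q}(\Omega_2')\approx L_{p,q}(\Omega_2)$ after splitting the index set is legitimate (this is exactly what Remark~\ref{remark} and the classification in Table~\ref{t1} are for). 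Everything else is a routine application of the Pe\l czy\'nski decomposition method and the already-established complementation $L_{p,q}(\Omega_2)\xhookrightarrow{c} L_{p,q}(0,\infty)$.
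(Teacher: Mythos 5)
Your overall strategy is right --- reduce to showing that $L_{p,q}(0,\infty)$ absorbs the atomic part, i.e.\ $L_{p,q}(0,\infty)\oplus L_{p,q}(\Omega_2)\approx L_{p,q}(0,\infty)$ --- but the step on which your absorption argument hinges, namely $L_{p,q}(\Omega_2)\approx L_{p,q}(\Omega_2)\oplus L_{p,q}(\Omega_2)$ via an even/odd split of the atoms, does not work as stated. Remark~\ref{remark} requires the two measure sequences to have uniformly bounded ratios, and the even-indexed subsequence of $\{\mu(A_n)\}$ is in general \emph{not} equivalent in this sense to the full sequence: for $\mu(A_n)=2^{-n}$ one has $\mu(A_{2n})/\mu(A_n)=2^{-n}\to 0$, so neither half of the split is identified with the whole by Remark~\ref{remark}. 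More seriously, the assertion that every infinite atomic $L_{p,q}$-space is isomorphic to its own square is nowhere established in the paper; for the types $\ell_{p,q,0}(F)$ and $\ell_{p,q,\infty}$ the isomorphic classification is explicitly left open (see the Questions in Section~\ref{atomic}), so Table~\ref{t1} cannot be quoted for this either. Your route can be repaired by running the case analysis of Corollaries~\ref{cor} and~\ref{cortype} and invoking the absorption results proved for each type separately (Proposition~\ref{0c} for $U_{p,q}$, Proposition~\ref{lpqabsorb} for type $\ell_{p,q,\infty}$, Proposition~\ref{5.5} for type $\ell_{p,q,0}(F)$, and $\ell_{p,q}\oplus\ell_{p,q}\approx\ell_{p,q}$ for type $\ell_{p,q,1}$), but that is a different argument from the one you wrote.

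The paper sidesteps the issue entirely: since $L_{p,q}(\Omega_2)\xhookrightarrow{c}L_{p,q}(0,\infty)$, write $L_{p,q}(0,\infty)\approx X\oplus L_{p,q}(\Omega_2)$; because $L_{p,q}(0,\infty)$ is \emph{primary} and $L_{p,q}(\Omega_2)\not\approx L_{p,q}(0,\infty)$ (Corollary~\ref{infinityinto}), the complement $X$ must itself be isomorphic to $L_{p,q}(0,\infty)$, whence $L_{p,q}(0,\infty)\oplus L_{p,q}(\Omega_2)\approx X\oplus L_{p,q}(\Omega_2)\approx L_{p,q}(0,\infty)$. This requires no information about the square of the atomic part, only the non-embedding result of Corollary~\ref{infinityinto} and primarity.
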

 \begin{proof}
Since $L_{p,q}(\Omega_2)\xhookrightarrow{c} L_{p,q}(0,\infty)$, it follows that there exists a Banach space $X\approx L_{p,q}(0,\infty) $ such that
$$X\oplus L_{p,q}(\Omega_2) \approx L_{p,q}(0,\infty) . $$
Moreover, since $L_{p,q}(0,\infty)$ is primary and $L_{p,q}(\Omega_2 )\not\approx L_{p,q}(0,\infty)$ (see Corollary \ref{infinityinto}), it follows that
 $X \approx L_{p,q}(0,\infty)$.
 Hence,
 $$L_{p,q}(\Omega )\approx L_{p,q}(\Omega_1) \oplus L_{p,q}(\Omega_2) \approx L_{p,q}(0,\infty)\oplus L_{p,q}(\Omega_2)  \approx X \oplus L_{p,q}(\Omega_2) \approx L_{p,q}(0,\infty)  ,$$
 which completes the proof.
 \end{proof}

\appendix
\section{Convergence of   Rademacher series}
In this appendix, we establish a fact used in the proof of Theorem \ref{ifonlyif2}.
Let $G:= ({\rm exp} (L_2))_0$ be the separable part of the exponential Orlicz space defined by Orlicz function $\Phi_2(t):= e^{t^2} -1$ (see e.g.  \cite{JSZ,RS,AS05}).
The classical Khintchine inequality was extended by   Rodin and Semenov\cite{RS}.
Precisely,  for any symmetric function space\cite{KPS} (called  r.i space in \cite{LT2}) $E(0,1)$ containing $G$,
we have
\begin{align}\label{RS}
c \norm{\alpha}_2 \le \norm{\sum_{k\ge 0}  \alpha_k r_k }_E \le C  \norm{\alpha}_2, ~\alpha:=(\alpha_1,\alpha_2,\cdots)\in \ell_{2},
\end{align}
where $\{r_k\}$ is the Rademacher system.

We say that a sequence of bounded elements $\{x_n\}$ from a symmetric function space $E(0,1)$ has absolutely equi-continuous norms\cite{CS94,SC90} if
$$\overline{\lim\limits_{n}} (\sup_{m\ge 1}\norm{x_m \chi_{A_n}}_{E})=0 $$
for any decreasing sequence of $A_n\subset [0,1]$ such that $\mu(A_n)\to 0$.
\begin{lem}\label{CS}
\cite[Proposition 3.1]{CS94}
Let $E(0,1)$ be a separable symmetric function space and let $\{x_n\}$ be a sequence of bounded elements  in $E(0,1)$ having absolutely equi-continuous norms and $x_n\to x\in E(0,1)$ in measure,
then $$\norm{x_n-x}_E \to 0. $$
\end{lem}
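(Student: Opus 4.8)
\emph{Proof plan.} The plan is to deduce this from the classical principle that, on a space of finite measure, convergence in measure together with a uniform control on the absolute continuity of the norms upgrades to norm convergence. I would argue by contradiction, decomposing each difference $x_n-x$ into a part supported on a set of small measure (to which the equi-continuity hypothesis applies) and a uniformly small bounded part. Two soft facts will be used repeatedly: that $E(0,1)$ is a Banach lattice with $\norm{\chi_{[0,1]}}_E<\infty$, so $|f|\le |g|$ a.e.\ forces $\norm{f}_E\le \norm{g}_E$; and that, since $E(0,1)$ is separable, its norm is order continuous, so in particular $\norm{x\chi_{B_k}}_E\to 0$ for every decreasing sequence $B_k$ with $\mu(B_k)\to 0$ (here $x\in E(0,1)$ by hypothesis).

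Assume $\norm{x_n-x}_E\not\to 0$, so that $\norm{x_{n_j}-x}_E\ge c>0$ along some subsequence. First I would use $x_n\to x$ in measure to pass to a further subsequence, still written $\{x_{n_j}\}$, for which $\mu(C_j)<2^{-j}$ where $C_j:=\{\,|x_{n_j}-x|>2^{-j}\,\}$. Then I would set $D_k:=\bigcup_{j\ge k}C_j$, so that $(D_k)$ is decreasing with $\mu(D_k)<2^{-k+1}\to 0$, and for every $j\ge k$ write $x_{n_j}-x=(x_{n_j}-x)\chi_{D_k}+(x_{n_j}-x)\chi_{[0,1]\setminus D_k}$. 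On $[0,1]\setminus D_k\subset [0,1]\setminus C_j$ one has $|x_{n_j}-x|\le 2^{-j}\le 2^{-k}$, whence $\norm{(x_{n_j}-x)\chi_{[0,1]\setminus D_k}}_E\le 2^{-k}\norm{\chi_{[0,1]}}_E$; while the triangle inequality gives $\norm{(x_{n_j}-x)\chi_{D_k}}_E\le \sup_{m\ge 1}\norm{x_m\chi_{D_k}}_E+\norm{x\chi_{D_k}}_E$, and both terms on the right tend to $0$ as $k\to\infty$---the first by absolute equi-continuity of the norms of $\{x_n\}$ applied to the decreasing sequence $(D_k)$, the second by order continuity. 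Combining these estimates, $\norm{x_{n_j}-x}_E\le 2^{-k}\norm{\chi_{[0,1]}}_E+\sup_{m\ge 1}\norm{x_m\chi_{D_k}}_E+\norm{x\chi_{D_k}}_E$ for all $j\ge k$; letting $j\to\infty$ with $k$ fixed and then $k\to\infty$ yields $\limsup_j\norm{x_{n_j}-x}_E=0$, contradicting the choice of the subsequence.

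The one place that needs care is that the deviation sets $C_j$ are not nested, so the hypothesis on $\{x_n\}$, which is phrased for \emph{decreasing} sequences of sets, cannot be invoked on them directly; the remedy is precisely the passage to a rapidly shrinking subsequence followed by replacing $C_j$ with the decreasing tails $D_k=\bigcup_{j\ge k}C_j$, which costs only the harmless factor $2^{-k+1}$ in the measures. Beyond this bookkeeping I do not anticipate any genuine obstacle, the argument being entirely soft.
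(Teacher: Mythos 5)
Your argument is correct. Note that the paper itself offers no proof of this lemma: it is quoted verbatim as \cite[Proposition 3.1]{CS94}, so there is nothing internal to compare against. Your proof is the standard one for upgrading convergence in measure to norm convergence under uniform absolute continuity: split $x_{n_j}-x$ over the decreasing tails $D_k=\bigcup_{j\ge k}C_j$ of rapidly shrinking deviation sets, bound the off-$D_k$ part by $2^{-k}\norm{\chi_{[0,1]}}_E$ via the lattice property, and kill the on-$D_k$ part using equi-continuity for $\{x_m\}$ together with order continuity (equivalently, absolute continuity of the norm, which separability of $E(0,1)$ guarantees) for the single element $x$. The one point you flag --- that the hypothesis is stated only for decreasing sequences of sets, so one must replace the non-nested $C_j$ by the nested tails $D_k$ --- is indeed the only place requiring care, and your handling of it is sound.
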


The theorem below is folklore.
However, due to the lack of references, we provide a short proof.

\begin{theorem}\label{convertoG}
Let $E(0,1)$ be a separable symmetric function space on $(0,1)$ such that
$E(0,1)\supset G$,
then
$$\norm{\left(\frac{1}{\sqrt{n}}\sum_{k=1}^n r_n\right)^*  -\xi ^*  }_E \to 0,$$
where $ \xi$ is a normal distribution which satisfies that
$\mu( \xi \in A ) = \frac{1}{\sqrt{2\pi }} \int_A e^{-t^2/2} dt$ for any interval $A\subset [0,\infty )$.
\end{theorem}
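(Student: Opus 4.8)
The plan is to combine the central limit theorem with the classical Khintchine inequality in rearrangement-invariant spaces (Rodin--Semenov, inequality \eqref{RS}) and the convergence criterion of Lemma~\ref{CS}. Write $S_n := \frac{1}{\sqrt n}\sum_{k=1}^n r_k$. By the classical central limit theorem, $S_n$ converges in distribution to $\xi$, a standard normal random variable; equivalently, the distribution functions converge, and since $\xi$ has a continuous distribution this forces $S_n^* \to \xi^*$ pointwise a.e.\ on $(0,1)$, hence in measure. The goal is therefore to upgrade this convergence in measure to norm convergence in $E(0,1)$, and for that I would invoke Lemma~\ref{CS}: it suffices to check that the sequence $\{S_n^*\}$ (equivalently $\{S_n\}$) consists of bounded elements of $E(0,1)$ with absolutely equi-continuous norms.

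First I would establish the uniform bound $\sup_n \norm{S_n}_E < \infty$. This is immediate from \eqref{RS} applied with $\alpha = \frac{1}{\sqrt n}(1,\dots,1,0,0,\dots)$ ($n$ ones), which gives $\norm{S_n}_E \le C \norm{\alpha}_2 = C$, using only the hypothesis $E \supset G$ so that \eqref{RS} is available. In particular each $S_n \in E(0,1)$ and $\xi^* \in E(0,1)$ as well (e.g.\ $\xi \in G \subset E$, since the Gaussian tail makes $\xi$ belong to $\exp(L_2)$, and $E$ is symmetric).

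The main point — and the step I expect to require the most care — is verifying that $\{S_n\}$ has absolutely equi-continuous norms in $E$, i.e.\ $\overline{\lim}_n \big(\sup_{m\ge 1}\norm{S_m \chi_{A_n}}_E\big) = 0$ for every decreasing sequence $A_n$ with $\mu(A_n)\to 0$. Since $E$ is separable, $\norm{\chi_{A}}_E \to 0$ as $\mu(A)\to 0$; the difficulty is the uniformity over $m$. The natural route is a uniform integrability / equi-integrability estimate on the family $\{S_m\}$: one shows that $\sup_m \norm{S_m \chi_{\{|S_m| > \lambda\}}}_E \to 0$ as $\lambda \to \infty$ and that $\sup_m \mu(\{|S_m| > \lambda\}) \to 0$, the latter from the uniform $L_2$-bound and Chebyshev. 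For the former, apply \eqref{RS} in a truncated form or use the sub-Gaussian tail estimate for Rademacher sums, $\mu(\{|S_m| > \lambda\}) \le 2e^{-\lambda^2/2}$, together with the fact that $E \supset G = (\exp L_2)_0$: a function whose decreasing rearrangement is dominated by that of a fixed element of $G$ and which is supported on a set of small measure has small $E$-norm, uniformly. Combining the split $S_m \chi_{A_n} = S_m \chi_{A_n \cap \{|S_m|\le\lambda\}} + S_m\chi_{A_n\cap\{|S_m|>\lambda\}}$ — bounding the first term by $\lambda \norm{\chi_{A_n}}_E$ and the second by the equi-integrability estimate — and sending first $n\to\infty$ then $\lambda\to\infty$ yields the absolute equi-continuity of norms.

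With boundedness, absolute equi-continuity of norms, and convergence in measure $S_n^* \to \xi^*$ in hand, Lemma~\ref{CS} applied to the sequence $\{S_n^*\}$ in the separable symmetric space $E(0,1)$ (note $S_n^* \to \xi^*$ in measure since rearrangement is continuous for convergence in distribution to a continuously-distributed limit) gives $\norm{S_n^* - \xi^*}_E \to 0$, which is exactly the assertion. I would remark that the hypothesis $E \supset G$ is used twice and essentially: once for the Rodin--Semenov equivalence controlling $\norm{S_n}_E$, and once to control the Gaussian-type tails uniformly inside $E$.
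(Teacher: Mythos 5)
Your proposal is correct and follows essentially the same route as the paper: central limit theorem for convergence in measure of $S_n^*$ to $\xi^*$, the Rodin--Semenov inequality \eqref{RS} for the uniform bound, verification of absolutely equi-continuous norms, and then Lemma~\ref{CS}. The only (cosmetic) difference is in how the equi-continuity is certified: you derive it from the sub-Gaussian tail bound for Rademacher sums together with a truncation at level $\lambda$, while the paper converts the uniform $G$-norm bound into the pointwise domination $S_n^*(t)\lesssim [\log(e/t)]^{1/2}\in G\subset E$ via \cite[Lemma 4.3]{AS05} --- both yield the same uniform majorant, so the arguments coincide in substance.
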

\begin{proof}
The central limit theorem shows that
$f_n:=\frac{1}{\sqrt{n}}\sum_{k=1}^n r_n\to \xi$ as $n\to \infty$ in distribution\cite[Theorem 27.1]{Billingsley}, and therefore,
$$f_n^* \to \xi^* $$
 in measure.
By \eqref{RS}, we obtain that $\sup_n \norm{f_n}_E <\infty $ and $ \sup_n \norm{f_n}_G <\infty $. We claim that
$$f_n^*(t) \le {\rm Const}  \cdot (\log(e/t))^{1/2}. $$
Indeed, by   Lemma 4.3  in \cite{AS05}, we obtain
$$1 \stackrel{\eqref{RS}}{\approx} \norm{ f_n  }_G \approx \sup _{t\in (0,1)}  \frac{t}{t [\log(e/t)] ^{1/2}} f_n^*(t), $$
that is, $f_n^*(t) \lesssim  [\log(e/t)] ^{1/2 }\in G $.
This implies that $\{f_n^*\}_{n\ge 1}$ has absolutely equi-continuous norms.
Therefore, by Lemma \ref{CS}, we obtain that  $$\norm{f^*_n -\xi}_E \to_n  0, $$
which completes the proof.
\end{proof}



\begin{thebibliography}{99}

\bibitem{AK}
F. Albiac, N. Kalton,
{\it Topics in Banach space theory,}
Graduate Texts in Mathematics 233, Springer, 2006.

\bibitem{ACL}
Z. Altshuler, P. Casazza, B.-L.  Lin,
{\it On symmetric Basic sequences in Lorentz sequence spaces},
Israel. J. Math. \textbf{15} (1973), 144--155.

\bibitem{Arazy81}
J. Arazy,
 {\it Basic sequences, embeddings, and the uniqueness of the symmetric structure in unitary matrix spaces,}
  J. Funct. Anal. {\bf 40} (1981), no. 3, 302--340.


\bibitem{AHS}
S. Astashkin, J. Huang, F. Sukochev,
{\it  Lack of isomorphic embeddings of symmetric function spaces into operator ideals, }
J. Funct. Anal. 280 (2021), no. 5, 108895, 34 pp.

\bibitem{ASS}
S. Astashkin, E. Semenov, F. Sukochev,
{\it The Banach--Saks $p$-property},
Math. Ann. \textbf{332} (2005), 879--900.

    \bibitem{AS05}
    S. Astashkin, F. Sukochev,
    {\it Series of independent random variables in rearrangement invariant space,}
    Israel J. Math. \textbf{145} (2005), 125--156.

 \bibitem{Banach}
   S. Banach, {\it Th\'{e}orie des op\'{e}rations lin\'{e}aires,}
   Warszawa (1932).
    \bibitem{Bennett_S} C. Bennett, R. Sharpley, {\it Interpolation of operators,} Academic Press, Boston, 1988.

\bibitem{Billingsley}
P. Billingsley,
{\it Probability and measure,}
3rd edition,
The University of Chicago,
John Wiley $\&$ Sons, New York, 1995.

\bibitem{Bylinkina}
O.P. Bylinkina,
{\it Some estimates of the Banach--Mazur   distance between finite-dimensional $L_{p,q}$ spaces},
Theory of operators in function spaces, 14--28,
Voronezh. Gos. Univ. Voronezh, 1983. (Russian)

\bibitem{Bogachev}
V.I. Bogachev, {\it Measure theory,}
Vol. I, II.
Springer-Verlag, Berlin, 2007.





\bibitem{CD85}
N. Carothers, S. Dilworth,
{\it Geometry of Lorentz spaces via interpolation},
Texas Functional Analysis Seminar 1985--1986 (Austin, TX, 1985--1986), 107--133, Longhorn Notes, Univ. Texas, Austin, TX, 1986.

\bibitem{CD}
N. Carothers, S. Dilworth,
{\it Subspaces of $L_{p,q}$,}
Proc. Amer. Math. Soc.
{\bf 104} (1988), no. 2, 537--545.

\bibitem{CD89}
N. Carothers, S. Dilworth,
{\it Equidistributed random variables in $L_{p,q}$,}
    J. Funct. Anal.
    \textbf{84} (1989), 146--159.

\bibitem{CF}
N. Carothers, P. Flinn,
{\it Embedding $l_p^{n^\alpha}$ in $l_{p,q}^n$},
Proc. Amer. Math. Soc. \textbf{88} (1983), 523--526.


\bibitem{CS94}
V. Chilin, F. Sukochev,
{\it Weak Convergence in non-commutative symmetric spaces,}
J. Operator Theory  \textbf{31} (1994), 35--65.



\bibitem{Dilworth90}
S. Dilworth,
{\it A scale of linear spaces related to the $L_p$ scale,}
Illinois J. Math.
\textbf{34} (1) (1990), 140--148.

  \bibitem{Dilworth}
    S. Dilworth,
 {\it Special Banach lattices and their applications,} Handbook of the geometry of Banach spaces, Vol. I, 497--532, North-Holland, Amsterdam, 2001.

\bibitem{DDP} P.G.~Dodds, T.K.-Y.~Dodds, B.~Pagter, {\it Non-Commutative Banach Function Spaces}, Math.\ Z. {\bf 201} (1989), 583--597.

\bibitem{DDS}
 P. Dodds, T. Dodds, F. Sukochev,
 {\it Banach--Saks properties in symmetric spaces of measurable operators},
 Studia Math.
  {\bf 178} (2007), 125--166.

  \bibitem{DDS2014}
   P. Dodds, T. Dodds, F. Sukochev,
   {\it On $p$-convexity and $q$-concavity in non-commutative symmetric spaces},
Integr. Equ. Oper. Theory \textbf{78} (2014), 91--114.

\bibitem{DFPS}
 P. Dodds, S.  Ferleger, B. de Pagter,  F. Sukochev,
  {\it Vilenkin systems and generalized triangular truncation operator,} Integr. Equ. Oper. Theory
   {\bf 40} (2001), no. 4, 403--435.



\bibitem{DSS}
P. Dodds, E. Semenov, F. Sukochev,
{\it The Banach--Saks properties in rearrangement invariant spaces,}
 Studia
Math. \textbf{162} (2004), 263--294.

\bibitem{FLM}
T. Figiel, J. Lindenstrauss, V.D. Milman,
{\it The dimension of almost
spherical sections of convex bodies},
 Acta Math. \textbf{139}  (1977), 53--94.




\bibitem{HSS}
J. Huang, O. Sadovskaya, F. Sukochev,
{\it On Arazy's problem concerning isomorphic embeddings of ideals of compact operators,}
submitted manuscript.





\bibitem{JSZ}
Y. Jiao, F. Sukochev,  D. Zanin,
{\it Sums of independent and freely independent identically distributed random variables,}
Studia Math. \textbf{255} (2020), 55--81.

\bibitem{JMST}
W. Johnson, B. Maurey, G. Schechtman, L. Tzafriri,
{\it Symmetric structures in Banach spaces, }
Mem. Amer. Math. Soc.  \textbf{19} (1979), no. 217, v+298 pp.


\bibitem{JS}
W. Johnson, G. Schechtman,
{\it Embedding $\ell_p^m $ into $\ell_1^n$},
Acta Math. \textbf{149} (1982), 71--85.

\bibitem{JS03}
W. Johnson, G. Schechtman,
{\it Very tight embeddings of subspaces of $L_p$, $1\le p<2$, into $\ell_{p}^n$},
Geom. Funct. Anal. \textbf{13} (2003), 845--851.

\bibitem{Kadec}
M.I. Kadec,
{\it Linear dimension of the spaces $L_p$ and $L_q$,}
Uspekhi Mat. Nauk, 13 (1958),  95--98
(in Russian).

\bibitem{KP}
M. Kadec, Pe{\l}czy\'{n}ski,
{\it Bases, lacunary sequences and complemented subspaces in the spaces $L_p$},
Studia Math. \textbf{21} (1961/1962), 161--176.


\bibitem{KamMal}
A.~Kaminska, L. Maligranda,
{\it On Lorentz spaces $\Gamma_{p,\omega}$,}
Israel J. Math. \textbf{140} (2004), 285--318.

\bibitem{K}
B. Kashin,
{\it Section of some finite-dimensional sets and classes of smooth
functions} (in Russian), Izv. Acad. Nauk. SSSR 41 (1977), 334--351.



\bibitem{KPS}
S. Krein, Y. Petunin, E. Semenov,
 {\it Interpolation of linear operators},
 Translations of Mathematical Monographs, Amer. Math. Soc.
  {\bf 54} (1982).

\bibitem{KS}
    A. Kuryakov, F. Sukochev,
    {\it Isomorphic classification of $L_{p,q}$-spaces},
    J. Funct. Anal. \textbf{269} (2015), 2611--2630.



\bibitem{Leung}
D.H. Leung,
{\it Isomorphism of certain weak $L^p$ spaces, }
Studia Math. \textbf{104} (1993), 151--160.

\bibitem{Leung2}
D.H. Leung,
{\it Isomorphic classification of atomic weak $L^p$ spaces,}
Interaction between functional analysis, harmonic analysis, and probability (Columbia, MO, 1994), 315--330,
Lecture Notes in Pure and Appl. Math., 175, Dekker, New York, 1996.

\bibitem{Leung3}
D.H. Leung,
{\it Purely non-atomic weak $L^p$ spaces,}
Studia Math. \textbf{122} (1997), 55--66.

\bibitem{LS}
D.H.  Leung, R. Sabarudin,
{\it The classification problem for nonatomic weak $L^p$ spaces},
J. Funct. Anal. \textbf{258} (2010), 373--396.


\bibitem{LT3} J.~Lindenstrauss, L.~Tzafriri, {\it On Orlicz sequence spaces. III}, Israel J. Math. \textbf{14} (1973), 368--389.

\bibitem{LT1}  J. Lindenstrauss, L.  Tzafriri,
 {\it Classical Banach spaces. I. Sequence spaces.}
 Ergebnisse der Mathematik und ihrer Grenzgebiete, Vol. 92. Springer-Verlag, Berlin-New York, 1977.

\bibitem{LT2} J.~Lindenstrauss, L.~Tzafriri, \emph{Classical Banach spaces. II. Function spaces,} Ergebnisse der Mathematik und ihrer Grenzgebiete,
     Vol. 97. Springer-Verlag, Berlin-New York, 1979.

\bibitem{LZ}
J.~Lindenstrauss, M. Zippin,
{\it Banach spaces with a unique unconditional basis,}
J. Funct. Anal. \textbf{3} (1969), 115--125.

\bibitem{LSZ}
S. Lord, F. Sukochev, D. Zanin,
 {\it Singular traces: Theory and applications,}
  De Gruyter Studies in Mathematics,  46. De Gruyter, Berlin, 2013.

\bibitem{Lorentz50}
G.G. Lorentz, {\it Some new functional spaces,}
Ann. of Math. \textbf{51} (1950), 37--55.

\bibitem{Lorentz51}
G.G. Lorentz,
{\it On the theory of spaces $\Lambda$},
Pcific J. Math. \textbf{1} (1951), 411--429.


\bibitem{Luxemburg}
W.A.J. Luxemburg,
{\it Rearrangement-invariant Banach function spaces,}
Proc. Symp. Analysis, Queen's univ.  (1967), 83--144.



\bibitem{Maligranda}
L. Maligranda,
{\it Type, cotype and convexity properties of quasi-Banach spaces,}
 in: M. Kato, L. Maligranda (Eds.), Banach
and Function Spaces, Proc. of the Internat. Symp. on Banach and Function Spaces, Kitakyushu--Japan, 2--4 Oct. 2003,
Yokohama Publ., 2004, pp. 83--120.




\bibitem{MP}
B. Maurey, G. Pisier,
{\it Series de variables al\'{e}atoires vectorielles independantes et propri\'{e}t\'{e}s geom\'{e}triques des espaces de Banach,}
Studia Math. \textbf{58} (1976), 45--90.

\bibitem{Novikova}
A. Novikova,
{\it The Banach--Saks index for Rademacher subspaces,}
Vestnik SSU, vol. 36, Samara University, Samara, 2005 (in Russian).


\bibitem{RS}
V. Rodin, E. Semenov,
{\it Rademacher series in symmetric spaces,}
Anal. Math. \textbf{1} (1975), 207--222.

\bibitem{Rosenthal}
H. Rosenthal,
{\it On the subspaces of
$L^p$ $(p>2)$ spanned by sequences of independent random variables,}
Israel J. Math. \textbf{8} (1970), 273--303.


    \bibitem{SS2018}
    O. Sadovskaya, F. Sukochev,
    {\it Isomorphic classification of $L_{p,q}$-spaces: the case $p=2$, $1\le q<2$,}
    Proc. Amer. Math. Soc. \textbf{146} (2018), 3975--3984.

\bibitem{Schutt}
C. Schutt,
{\it Lorentz spaces that are isomorphic to subspaces of $L^1$},
Trans. Amer. Math. Soc. \textbf{314} (1989), 583--595.

\bibitem{SS2004}
E. Semenov, F. Sukochev,
{\it The Banach--Saks index},
Mat. Sb. \textbf{195} (2004), 117--140.


\bibitem{S96}
F. Sukochev,
\textit{Non-isomorphism of $L_p$-spaces associated with finite and infinite von Neuman algebras,}
 Proc. Amer. Math. Soc. \textbf{124}(5)  (1996), 1517--1527.

    \bibitem{S01}
    F. Sukochev,
    {\it On the Banach-isomorphic classification of $L_p$ spaces of hyperfinite semifinite von Neumann algebra,}
     Geometric analysis and applications (Canberra, 2000), 213--221, Proc. Centre Math. Appl. Austral. Nat. Univ., 39, Austral. Nat. Univ., Canberra, 2001.

\bibitem{SC90}
F. Sukochev, V. Chilin,
{\it Symmetric spaces on semifinite von Neumann algebras,}
Soviet Math. Dokl. \textbf{42} (1991), 97--101.

     \bibitem{Tradacete}
     P. Tradacete,
     {\it Subspace structure of Lorentz $L_{p,q}$ spaces and strictly singular operators,}
     J. Math. Anal. Appl. \textbf{367} (2010), 98--107.

 \bibitem{Wojtaszczyk}
 P. Wojtaszczyk,
 {\it Banach spaces for analysts,}
  Cambridge Studies in Advanced Mathematics, 25. Cambridge University Press, Cambridge, 1991.

\end{thebibliography}
\end{document}